\documentclass[11p]{article}

\usepackage[a4paper, total={6in, 9in}]{geometry}

\bibliographystyle{plain}

\usepackage{fixmath}
\usepackage{mathtools}
\usepackage{microtype}
\usepackage{wrapfig}
\usepackage{amsmath}
\usepackage{amssymb}
\usepackage{bbm}
\usepackage{tikz}
\usetikzlibrary{fadings}
\usetikzlibrary{patterns}
\usetikzlibrary{shadows.blur}
\usetikzlibrary{shapes}
\usepackage{hyperref}
\usepackage{amsthm}
\newtheorem{theorem}{Theorem}
\newtheorem{lemma}{Lemma}
\newtheorem{proposition}{Proposition}
\newtheorem{corollary}{Corollary}

\newtheorem{example}{Example}
\theoremstyle{definition}
\newtheorem{remark}{Remark}
\newtheorem{definition}{Definition}
\allowdisplaybreaks
\usepackage{algpseudocode}
\usepackage{float}
\usepackage[ruled,vlined,linesnumbered]{algorithm2e}
\usepackage{url}

\makeatletter
\def\bbordermatrix#1{\begingroup \m@th
  \@tempdima 4.75\p@
  \setbox\z@\vbox{%
    \def\cr{\crcr\noalign{\kern2\p@\global\let\cr\endline}}%
    \ialign{$##$\hfil\kern2\p@\kern\@tempdima&\thinspace\hfil$##$\hfil
      &&\quad\hfil$##$\hfil\crcr
      \omit\strut\hfil\crcr\noalign{\kern-\baselineskip}%
      #1\crcr\omit\strut\cr}}%
  \setbox\tw@\vbox{\unvcopy\z@\global\setbox\@ne\lastbox}%
  \setbox\tw@\hbox{\unhbox\@ne\unskip\global\setbox\@ne\lastbox}%
  \setbox\tw@\hbox{$\kern\wd\@ne\kern-\@tempdima\left[\kern-\wd\@ne
    \global\setbox\@ne\vbox{\box\@ne\kern2\p@}%
    \vcenter{\kern-\ht\@ne\unvbox\z@\kern-\baselineskip}\,\right]$}%
  \null\;\vbox{\kern\ht\@ne\box\tw@}\endgroup}
\makeatother

\usepackage{booktabs}

\usepackage[colorinlistoftodos]{todonotes}
\usepackage{caption}
\usepackage{subcaption}
\usepackage{lscape}
\usepackage[affil-it]{authblk}
\usepackage{comment} 

\newcommand{\tr}{\textup{tr}}
\newcommand{\cl}{\textup{cl}_{CG}}
\newcommand{\Diag}{\textup{Diag}}
\newcommand{\diag}{\textup{diag}}
\newcommand{\triu}{\textup{triu}}

\newcommand{\Coll}{\textup{Col}}
\newcommand{\Nul}{\textup{Nul}}
\newcommand{\Conv}{\textup{Conv}}

\newcommand{\Aff}{\textup{Aff}}

\newcommand{\Pz}{\textup{P}}
\newcommand{\cone}{\textup{cone}}
\newcommand{\svec}{\textup{svec}}
\newcommand{\xb}{\bold{x}}
\newcommand{\Xb}{\bold{X}}
\newcommand{\Cb}{\bold{C}}
\newcommand{\Ab}{\bold{A_i}}
\newcommand{\zb}{\bold{0}}
\newcommand{\yb}{\bold{y}}

\makeatletter
\newcommand{\setword}[2]{%
  \phantomsection
  #1\def\@currentlabel{\unexpanded{#1}}\label{#2}%
}
\allowdisplaybreaks
\makeatother

\begin{document}

\title{The Chv\'atal-Gomory Procedure for Integer SDPs with Applications in Combinatorial Optimization}

\author[ ]{Frank de Meijer \thanks{Delft Institute of Applied Mathematics, Delft University of Technology, The Netherlands, {\tt f.j.j.demeijer@tudelft.nl}} \qquad  Renata Sotirov \thanks{CentER, Department of Econometrics and OR, Tilburg University, The Netherlands, {\tt r.sotirov@uvt.nl}}}

\date{}
\maketitle

\begin{abstract}
In this paper we study the well-known Chv\'atal-Gomory (CG) procedure for the class of integer semidefinite programs (ISDPs).
We prove several results regarding the hierarchy of relaxations obtained by iterating this procedure.
We also study different formulations of the elementary closure of spectrahedra. A polyhedral description of the elementary closure for a specific type of spectrahedra is derived by exploiting total dual integrality for SDPs.
Moreover, we show how to exploit (strengthened) CG cuts in a branch-and-cut framework for ISDPs.
Different from existing algorithms in the literature, the separation routine in our approach
exploits both the semidefinite and the integrality constraints. We provide separation routines
for several common classes of binary SDPs resulting from combinatorial optimization problems.
In the second part of the paper we present a comprehensive application of our approach to the quadratic traveling salesman
problem (\textsc{QTSP}). 
Based on the algebraic connectivity of the directed Hamiltonian cycle, two ISDPs that model the \textsc{QTSP} are introduced.
We show that the CG cuts resulting from these formulations contain several
well-known families of cutting planes. Numerical results illustrate the practical strength of the
CG cuts in our branch-and-cut algorithm, which outperforms alternative
ISDP solvers and is able to solve large \textsc{QTSP} instances to optimality.

\end{abstract}
\hfill \break
\textbf{Keywords} integer semidefinite programming, Chv\'atal-Gomory procedure, total dual integrality, branch-and-cut, quadratic traveling salesman problem

\section{Introduction}
Convex integer nonlinear programs (CINLPs) are optimization problems in which the objective function is convex and the continuous relaxation of the feasible region is a convex set.
Nonlinearities in CINLPs can appear in both the objective function and/or the constraints.
Motivated by their numerous applications and their ability to generalize several well-known problem classes, CINLPs have been studied for decades. In this paper we focus on a specific class of CINLPs: the integer semidefinite programs (ISDPs). These problems can be formulated as:
\begin{align} \label{eq:ISDP}
\sup ~   \bold{b}^\top \bold{x} 
\quad \text{s.t.} \quad  \bold{C} - \sum_{i = 1}^m \bold{A}_ix_i \succeq \bold{0}, \quad  \bold{x} \in \mathbb{Z}^m,  
\end{align}
with $\bold{b} \in \mathbb{R}^m$, $\bold{C}, \bold{A}_i \in \mathcal{S}^n$, where $\mathcal{S}^n$ denotes the cone of symmetric matrices of order $n$. Note that   $\bold{C} - \sum_{i = 1}^m \bold{A}_ix_i \succeq \bold{0}$ is referred to as a linear matrix inequality (LMI) and it is the SDP analogue of a system of linear inequalities defining a polyhedron.
Since integer linear programs belong to the family of ISDPs, problems of the form \eqref{eq:ISDP} are generally $\mathcal{NP}$-hard to solve.

Although CINLPs have been studied extensively, see e.g., the survey of Bonami et al.\ \cite{BonamiEtAl}, the special case of ISDPs has received attention only very recently.
This is remarkable, as the mixture of positive semidefiniteness and integrality leads naturally to a broad range of applications, e.g., in architecture \cite{CerveiraEtAl, YonekuraKanno}, signal processing \cite{GallyPfetsch, PhilippEtAl} and combinatorial optimization \cite{GallyEtAl, RendlEtAl}. For a more detailed overview of applications of ISDPs, we refer the reader to \cite{GallyEtAl, KobayashiTakano}.

Only a few solution approaches for solving SDPs with integrality constraints have been considered. Gally et al.\ \cite{GallyEtAl} propose a general framework called SCIP-SDP for solving mixed integer semidefinite programs (MISDPs) using a branch-and-bound (B\&B) procedure with continuous SDPs as subproblems. They show that strict duality of the relaxations is maintained in the B\&B tree and study several solver components. Alternatively, Kobayashi and Takano \cite{KobayashiTakano} propose a cutting-plane algorithm that initially relaxes the positive semidefinite (PSD) constraint and solves a mixed integer linear programming problem, where the PSD constraint is imposed dynamically via cutting planes. This leads to a general branch-and-cut (B\&C) algorithm for solving MISDPs. 
A third project that encounters general ISDPs is YALMIP \cite{Lofberg}. However, it is noted by the authors of \cite{GallyEtAl} and \cite{KobayashiTakano} that the branch-and-bound ISDP solver in YALMIP is not yet competitive to the performance of the other two methods. Recently, Matter and Pfetsch \cite{MatterPfetsch} study different presolving strategies for MISDPs for both the B\&B and B\&C approach.

Apart from solution methods for solving general ISDPs or MISDPs, there are
several other approaches in the literature that aim to solve integer problems by utilizing SDP relaxations in a B\&B framework. Although these approaches are very related to problems of the form \eqref{eq:ISDP} in the sense that they also combine semidefinite programs with a branching strategy, they differ in the sense that the problem at hand is not necessarily formulated as a MISDP. Examples are  the BiqCrunch solver for constrained binary quadratic
problems~\cite{KrislockEtAl} and  the Biq Mac solver for unconstrained binary quadratic problems~\cite{RendlEtAl}.

In the light of improving the performance of the B\&C algorithm of \cite{KobayashiTakano}, we consider the exploitation of cutting planes for ISDPs. Practical algorithms for CINLPs have benefited a lot from the addition of strong cutting planes, see e.g., \cite{AtamturkNarayanan2007, AtamturkNarayanan2010, BELOTTI20173, StubbsMehrotra}, where many of these cutting plane frameworks are based on generalizations from integer linear programming. Among the most well-known cutting planes for integer linear programs (ILPs) are the
Chv\'atal-Gomory (CG) cuts \cite{Chvatal, Gomory}. Gomory~\cite{Gomory} introduced these cuts to design the first finite cutting plane algorithm for ILPs. Chv\'atal \cite{Chvatal} later generalized this notion and introduced the closure of all such cuts that leads to a hierarchy of relaxations of the ILP with increasing strength. Chv\'atal \cite{Chvatal} and Schrijver \cite{Schrijver} prove that this hierarchy is finite for bounded real polyhedra and rational polyhedra, respectively. Later on, the CG procedure is introduced for more general convex sets, see e.g.,~\cite{DeyVielma, DadushEtAl2011, DunkelSchulz, BraunPokutta, DadushEtAl}. In particular, \c{C}ezik and Iyengar \cite{CezikIyengar} show how to generate  CG cuts for CINLPs where the continuous relaxation of the feasible region is conic representable.

A leading application in this work is a combinatorial optimization problem that can be modelled as an ISDP: the quadratic traveling salesman problem (\textsc{QTSP}). J\"ager and Molitor \cite{Jager} introduce the QTSP as the problem of finding a Hamiltonian cycle in a graph that minimizes the total interaction costs among consecutive arcs. The problem is motivated by an important application in bioinformatics \cite{Jager, AandFFischer}, but has also applications in telecommunication, precision farming and robotics, see e.g.,~\cite{WirthSteffan, FeketeKrupke, Aggarwal}. The \textsc{QTSP} is $\mathcal{NP}$-hard in the strong sense and is currently considered as one of the hardest combinatorial optimization problems to solve in practice.

Several papers have studied the \textsc{QTSP}. In \cite{Fischer, Fischer2014, FischerHelmberg} the polyhedral structure of the asymmetric and symmetric \textsc{QTSP}--polytope is discussed.  Rostami et al.\ \cite{RostamiEtAl} provide several lower bounding procedures for the \textsc{QTSP}, including a column generation approach. Woods and Punnen \cite{WoodsPunnen} provide different classes of neighbourhoods for the \textsc{QTSP}, while Stan{\v e}k et al.\ \cite{Stanek} discuss several heuristics for the quadratic traveling salesman problem in the plane. The linearization problem for the \textsc{QTSP} is studied in \cite{PunnenEtAl}. Fischer et al.\ \cite{AandFFischer, AandFFischer_RNA} introduce several exact algorithms and heuristics for the asymmetric \textsc{QTSP}, while Aichholzer et al.\ \cite{Aichholzer} consider exact solution methods for the minimization and maximization version of the symmetric \textsc{QTSP}.

\subsection{Main results and outline}
In this paper we consider the Chv\'atal-Gomory procedure for ISDPs from a theoretical as well as a practical point of view. On the theoretical side, we derive several results on the elementary closure of all CG cuts for spectrahedra. On the practical side, we show how to apply these cuts in a generic branch-and-cut algorithm for ISDPs that exploits both the positive semidefiniteness and the integrality of the problem. We extensively study the application of this new approach to the \textsc{QTSP}, which confirms the practical strength of the proposed method.

We start by  reformulating a CG cut for a spectrahedron in terms of its data matrices in combination with the elements from the dual cone.
This leads to a constructive description of the elementary closure of spectrahedra  rather than the implicit description that is known for general convex sets.  Equivalent to the case of polyhedra, the elementary closure operation can be repeated, leading to a hierarchy of stronger approximations of the integer hull of the spectrahedron. For the case of bounded spectrahedra, we provide a compact proof of a homogeneity property for the elementary closure operation that is based on a theorem of alternatives and Dirichlet's approximation theorem. We prove this property for halfspaces that are sufficient to describe any compact convex set. Homogeneity is the cornerstone in showing that the elementary closure of a bounded spectrahedron is polyhedral.
 Although the latter result is known in the literature, our proof significantly simplifies compared to the general proofs given in \cite{DadushEtAl, BraunPokutta}.
 Finally, we exploit the recently introduced notion of total dual integrality for SDPs~\cite{DeCarliSilvaTuncel} to derive {a closed-form expression for the elementary closure of spectrahedra defined by a totally dual integral linear matrix inequality. We additionally provide a characterization of bounded spectrahedra with this property and several more general sufficient conditions.}

It is known that the practical strength of CG cuts in integer linear programming is mainly due to their application in branch-and-bound methods.
In this vein, we propose a generic branch-and-cut (B\&C) framework for ISDPs.
Our algorithm initially relaxes the PSD constraint and solves a mixed integer linear program (MILP), where the PSD constraint is imposed iteratively via CG and/or strengthened CG cuts.
To derive strengthened  CG cuts, we use a similar approach to the one for rational polyhedra by Dash et al.\ \cite{DashEtAl}.
Our B\&C algorithm is an extension of the algorithm of~\cite{KobayashiTakano}, in which separation is only based on positive semidefiniteness without taking into account the integrality of the variables.
Our approach also builds up on the work by \c{C}ezik and Iyengar \cite{CezikIyengar}, in which the authors leave the separation of CG cuts for conic problems as an open problem and do not include these cuts in their computational study.
{We provide an example of our approach for a common class of binary SDPs that frequently appears in combinatorial optimization.}

In the third part of this paper we apply our results to a difficult-to-solve combinatorial optimization problem: the quadratic traveling salesman problem. We derive two ISDP formulations of this problem based on the notion of algebaic connectivity. To solve these models using our B\&C algorithm, we propose several CG separation routines and show that various of these routines lead to well-known cuts for the \textsc{QTSP}. 
Computational results on a large set of benchmark \textsc{QTSP} instances show that the practical potential of our new method is twofold. The method significantly outperforms the ISDP solvers from the literature, whereas it also provides competitive results to the state-of-the-art \textsc{QTSP} solution method of \cite{AandFFischer}.

The paper is organized as follows. In Section~\ref{Section:CGprocedure} we study the Chv\'atal-Gomory procedure for spectrahedra. Section~\ref{Section:B&C} provides a CG-based B\&C framework for general ISDPs and provides specific CG separation routines for two classes of binary SDPs. In Section~\ref{Section:QTSP} we formally define the \textsc{QTSP} and present two ISDP formulations of this problem. Numerical results are given in Section~\ref{Section:ComputationResults}.

\subsection{Notation} \label{sect:notation}
A directed graph is given by $G = (N,A)$, where $N$ is a set of nodes and $A \subseteq N \times N$ is a set of arcs.
We use $K_n$ to denote the complete directed graph on $n$ nodes, i.e., a directed graph in which every pair of nodes is connected by a bidirectional edge.

We denote by $\mathbf{0}_n \in \mathbb{R}^n$  the vector of all zeros, and by $\mathbf{1}_n \in \mathbb{R}^n$ the vector of all  ones. The identity matrix and the matrix of ones of order $n$ are denoted by $\bold{I_n}$ and $\bold{J_n}$, respectively. We omit the subscripts of these matrices when there is no confusion about the order.
The $i$-th elementary vector is denoted by $\mathbf{e}_i$ and we define $\bold{E_{ij}} := \bold{e}_i\bold{e}_j^\top$. {For any two matrices $\bold{A}$ and $\bold{B}$, the direct sum is defined as $\bold{A} \oplus \bold{B} = \big[ \begin{smallmatrix}
    \bold{A} & \bold{0} \\ \bold{0} & \bold{B}
\end{smallmatrix} \big]$.}

The set of integer numbers and non-negative integer numbers is denoted by $\mathbb{Z}$ and $\mathbb{Z}_+$, respectively. For any integer vector $c \in \mathbb{Z}^m$, we let $\gcd(c)$ denote the greatest common divisor of the entries in $c$. We define the floor (resp.\ ceil) operator $\lfloor \cdot \rfloor$ (resp.\ $\lceil \cdot \rceil$) as the largest (resp.\ smallest) integer smaller (resp.\ larger) than or equal to the input number. For $n \in \mathbb{Z}_+$, we define the set $[n] := \{1, \ldots, n\}$.
Also, for any $S \subseteq [n]$, we let $\bold{\mathbbm{1}_{S}}$ be the binary indicator vector of $S$.

We let $\mathcal{S}^n$ be the set of all $n \times n$ real symmetric matrices and
denote by $\bold{X} \succeq \mathbf{0}$ that a symmetric matrix $\bold{X}$ is positive semidefinite. We use $\bold{X} \succneqq \mathbf{0}$ to denote that $\bold{X}$ is positive semidefinite, but not equal to the zero matrix.
The cone of symmetric positive semidefinite matrices is defined as $\mathcal{S}^n_+ := \{ \bold{X} \in \mathcal{S}^n \, : \, \, \bold{X} \succeq \mathbf{0} \}$. The trace of a square matrix $\bold{X}=(x_{ij})$ is given by $\tr(\bold{X})=\sum_{i}x_{ii}$.
 For any $\bold{X},\bold{Y} \in \mathbb{R}^{n \times n}$ the trace inner product is defined as $\langle \bold{X}, \bold{Y} \rangle := \tr(\bold{X}^\top \bold{Y}) = \sum_{i = 1}^n\sum_{j = 1}^n x_{ij}y_{ij}$.

The operator $\diag : \mathbb{R}^{n \times n} \rightarrow \mathbb{R}^n$ maps a square matrix to a vector consisting of its diagonal elements. We denote by $\Diag : \mathbb{R}^n \rightarrow \mathbb{R}^{n \times n}$ its adjoint operator.

\section{The Chv\'atal-Gomory procedure for ISDPs} \label{Section:CGprocedure}

In this section we study the extension of the cutting-plane procedure by Chv\'atal~\cite{Chvatal} and Gomory~\cite{Gomory} for integer linear programs to the class of integer semidefinite programs. We show that several concepts, such as the Chv\'atal-Gomory closure and the Chv\'atal rank, can be generalized to ISDPs. We start by recollecting the procedure for general convex sets.

\subsection{The Chv\'atal-Gomory procedure} \label{Subsection:CGintro}

Let $C \subseteq \mathbb{R}^m$ be a non-empty closed convex set and let $C_I$ be its integer hull, i.e., $C_I := \Conv(C \cap \mathbb{Z}^m )$.
The Chv\'atal-Gomory cutting-plane procedure is introduced by Chv\' atal \cite{Chvatal} and Gomory \cite{Gomory} and is regarded to be among the most celebrated results in integer programming. The CG procedure aims at systematically identifying valid inequalities for $C$ that cut off non-integer solutions. By adding these new cuts to the relaxation and repeating this process, one obtains a hierarchy of stronger relaxations that converges to $C_I$.

The CG procedure relies on the notion of rational halfspaces. A rational halfspace is of the form $H = \{ \bold{x} \in \mathbb{R}^m \, : \, \, \bold{c}^\top \bold{x} \leq d \}$ for some $\bold{c} \in \mathbb{Q}^m, d \in \mathbb{Q}$. It is known that all such halfspaces can be represented by $\bold{c} \in \mathbb{Z}^m$ such that the entries of $\bold{c}$ are relatively prime. If $H = \{ \bold{x} \in \mathbb{R}^m \, : \, \, \bold{c}^\top \bold{x} \leq d \}$ with $\bold{c} \in \mathbb{Z}^m$, $\gcd(\bold{c}) = 1$, then $H_I = \{ \bold{x} \in \mathbb{R}^m \, : \, \, \bold{c}^\top \bold{x} \leq \lfloor d \rfloor \}$.

\begin{definition} \label{Def:elementaryclosure}
The elementary closure of a closed convex set $C$ is the set
\begin{align}
\cl (C) := \bigcap_{\substack{(\bold{c}, d) \in \mathbb{Q}^m \times \mathbb{Q} \\
C \subseteq H = \{\bold{x} \, : \, \, \bold{c}^\top \bold{x} \leq d \}}} H_I.
\end{align}
\end{definition}

Equivalently, the elementary closure of $C$ can be written as:
\begin{align} \label{eq:elementaryclosure}
\cl (C) = \bigcap_{\substack{(\bold{c}, d) \in \mathbb{Z}^m \times \mathbb{R} \\
C \subseteq \{\bold{x} \, : \, \, \bold{c}^\top \bold{x} \leq d \}}} \left\{ \bold{x} \in \mathbb{R}^m \, : \, \, \bold{c}^\top \bold{x} \leq \lfloor d \rfloor \right\},
\end{align}
and we will primarily use this form in this work. The inequalities that define $\cl(C)$ in \eqref{eq:elementaryclosure} are known as CG cuts \cite{Gomory}. One can verify that $C_I \subseteq \cl(C)$. When $C$ is compact, we can exploit the following proposition due to Dadush et al.\ \cite{DadushEtAl2011} and De Carli Silva and Tun\c{c}el \cite{DeCarliSilvaTuncel}.
\begin{proposition} \label{Prop:compactouter}
If $C \subseteq \mathbb{R}^m$ is a compact convex set, then
\begin{align*}
C = \bigcap_{\substack{(\bold{c}, d) \in \mathbb{Z}^m \times \mathbb{R} \\
C \subseteq \{\bold{x} \, : \, \, \bold{c}^\top \bold{x} \leq d \}}} \left\{\bold{x} \in \mathbb{R}^m \, : \, \, \bold{c}^\top \bold{x} \leq d\right\}.
\end{align*}
\end{proposition}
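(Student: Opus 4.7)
The plan is to show that every point outside $C$ can be separated from $C$ by a halfspace whose normal vector is integer. The containment $C \subseteq \bigcap_{(\mathbf{c},d)} \{\mathbf{x} \, : \, \mathbf{c}^\top \mathbf{x} \leq d\}$ is immediate, since every defining inequality is by construction valid on $C$. Thus the entire content of the proposition lies in the reverse inclusion, and this will be what I tackle.

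Fix a point $\mathbf{x}_0 \notin C$. Since $C$ is closed and convex and $\mathbf{x}_0 \notin C$, the strict separating hyperplane theorem supplies $\mathbf{c}' \in \mathbb{R}^m$ and $d_1 < d_2 \in \mathbb{R}$ with $\mathbf{c}'^\top \mathbf{y} \leq d_1$ for every $\mathbf{y} \in C$ and $\mathbf{c}'^\top \mathbf{x}_0 \geq d_2$. What remains is to replace this real normal vector by an integer one without destroying the separation.

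Here is where compactness enters. Set $M := \max_{\mathbf{y} \in C \cup \{\mathbf{x}_0\}} \|\mathbf{y}\|$, which is finite since $C$ is bounded. Pick a rational vector $\mathbf{c}'' \in \mathbb{Q}^m$ with $\|\mathbf{c}'' - \mathbf{c}'\| < \delta$ for some $\delta$ to be chosen; by Cauchy--Schwarz, $|(\mathbf{c}'' - \mathbf{c}')^\top \mathbf{z}| \leq \delta M$ for every $\mathbf{z} \in C \cup \{\mathbf{x}_0\}$. Setting $\delta < (d_2-d_1)/(3M)$ we obtain $\mathbf{c}''^\top \mathbf{y} \leq d_1 + (d_2-d_1)/3$ for all $\mathbf{y} \in C$ and $\mathbf{c}''^\top \mathbf{x}_0 \geq d_2 - (d_2-d_1)/3$, so strict separation is preserved. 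Finally, clearing denominators, let $t \in \mathbb{Z}_+$ be such that $\mathbf{c} := t\mathbf{c}'' \in \mathbb{Z}^m$, and set $d := t(d_1 + (d_2-d_1)/3)$. Then $(\mathbf{c},d) \in \mathbb{Z}^m \times \mathbb{R}$, the halfspace $\{\mathbf{x} \, : \, \mathbf{c}^\top \mathbf{x} \leq d\}$ contains $C$, and $\mathbf{c}^\top \mathbf{x}_0 > d$. Hence $\mathbf{x}_0$ is excluded from the right-hand side, completing the proof.

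The main technical obstacle is the last step: the separating hyperplane theorem only delivers a real normal, whereas the statement insists on an integer one. Boundedness of $C$ (via $M < \infty$) is what lets an arbitrarily accurate rational approximation of the normal still separate with room to spare, and rationality then transfers to integrality simply by scaling, since the right-hand side $d$ is allowed to range over all of $\mathbb{R}$.
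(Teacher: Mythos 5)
Your proof is correct. Note that the paper does not actually prove Proposition~\ref{Prop:compactouter}; it is stated without proof and attributed to Dadush et al.\ and De Carli Silva--Tun\c{c}el, and your argument (strict separation of $\mathbf{x}_0$ from the closed convex set $C$, followed by a rational perturbation of the normal whose error is controlled by the bound $M$ coming from compactness, and finally clearing denominators) is exactly the standard proof of this fact, with all estimates checking out. The only degenerate cases you silently skip are $C = \emptyset$ and $M = 0$, both of which are trivial (and avoidable by using $M+1$ in place of $M$), so there is nothing of substance missing.
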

It follows from Proposition~\ref{Prop:compactouter} that for compact convex sets $C$ we have $\cl(C) \subseteq C$. We can now repeat the procedure by defining $C^{(0)} := C$ and $C^{(k+1)} := \cl(C^{(k)})$ for all integer $k \geq 0$, where $C^{(k)}$ is referred to as the $k$th CG closure of $C$. For any compact convex set $C$ this leads to the hierarchy $C_I \subseteq \ldots \subseteq C^{(k+1)} \subseteq C^{(k)} \subseteq \ldots \subseteq C^{(0)} = C$. The smallest $k$ for which $C_I = C^{(k)}$ is known as the \emph{Chv\'atal rank} of $C$. In the same vein, the Chv\'atal rank of an inequality $\bold{c}^\top \bold{x} \leq d$ valid for $C_I$ is defined as the smallest $k$ such that $C^{(k)} \subseteq \{ \bold{x} \in \mathbb{R}^m \, : \,\, \bold{c}^\top \bold{x} \leq d\}$.

\begin{remark}  
Observe that for an unbounded closed convex set $C$,  $\cl(C) \subseteq C$ does not have to hold. 
For instance, the irrational halfspace $\{ \bold{x} \in \mathbb{R}^2 \, : \, \, x_1 + \sqrt{2} x_2 \leq 0\}$ is not contained in any halfspace of the form $\{\bold{x} \in \mathbb{R}^2 \, : \, \, \bold{c}^\top \bold{x} \leq d\}$ with $\bold{c} \in \mathbb{Z}^2$. Therefore, $\cl(C)$ is the intersection over an empty set of halfspaces, resulting in $\cl(C) = \mathbb{R}^2$.
\end{remark}

The finiteness of the Chv\'atal rank is proven in the literature for bounded real polyhedra~\cite{Chvatal}, unbounded rational polyhedra~\cite{Schrijver} and conic representable sets in the 0/1-cube~\cite{CezikIyengar}. However, the Chv\'atal rank for unbounded real polyhedra can be infinite as shown by Schrijver \cite{Schrijver}. Schrijver also shows that the elementary closure of a rational polyhedron is a rational polyhedron. This result is later generalized to irrational polytopes \cite{DunkelSchulz}, bounded rational ellipsoids \cite{DeyVielma}, strictly convex bodies~\cite{DadushEtAl2011} and general compact convex sets \cite{DadushEtAl,BraunPokutta}. As a consequence, the Chv\'atal rank of these sets is also known to be finite.

\subsection{The elementary closure of spectrahedra}
We now apply the notions from Section~\ref{Subsection:CGintro} to integer semidefinite programming problems in standard primal and dual {forms}. On top of the general definition given in the previous section, we derive alternative formulations of the elementary closure of spectrahedra.

Let $\bold{b} \in \mathbb{R}^m$, $\bold{C} \in \mathcal{S}^n$ and $\bold{A_i} \in \mathcal{S}^n$ for all $i \in [m]$. An ISDP in standard primal form is given by:
\begin{align}
(P_{ISDP}) & \left \{ \begin{aligned} \inf \quad &  \langle \bold{C}, \bold{X} \rangle \\
\text{s.t.} ~~ & \langle \bold{A_i} , \bold{X} \rangle = b_i 
\quad \forall i \in [m],
~~\bold{X} \succeq \bold{0}, \,\, \bold{X} \in \mathbb{Z}^{n \times n},  \end{aligned} \right. \label{primalISDP}
\intertext{while an ISDP in standard dual form is given by:}
(D_{ISDP}) & \left \{ \begin{aligned} \sup \quad &  \bold{b}^\top \bold{x} \\
\text{s.t.} \quad & \bold{C} - \sum_{i = 1}^m \bold{A_i}x_i \succeq \mathbf{0}, 
~~\bold{x} \in \mathbb{Z}^m. \end{aligned} \right. \label{dualISDP}
\end{align}
Using standard techniques, one can syntactically rewrite an integer SDP from primal form to dual form and vice versa. Consistent with most of the literature, we mainly consider, but not restrict ourselves to, ISDPs in dual form.

The continuous relaxation of the feasible set of \eqref{dualISDP} is defined as follows:
\begin{align} \label{def:P}
P & := \left\{ \bold{x} \in \mathbb{R}^m \, \, : \, \bold{C} - \sum_{i = 1}^m \bold{A_i} x_i \succeq \mathbf{0} \right\}.
\end{align}
The set $P$ is a spectrahedron that is closed, semialgebraic and convex, which we assume to be non-empty. 
{Throughout the paper, we make the following non-restrictive assumption on the linear matrix inequality defining $P$. In case $P$ is not full-dimensional, i.e., the subspace $\mathcal{L} := \Aff(P)^\perp$ is nontrivial, we extend $\bold{C}$ and $\bold{A_i}$, $i \in [m]$, to
\begin{align*}
\bold{C} \oplus \Diag(\bold{L}\bold{x_0}) \oplus -\Diag (\bold{Lx_0}) \quad \text{and}\quad \bold{A_i} \oplus \Diag(\bold{\ell_i}) \oplus -\Diag(\bold{\ell_i})~\text{for all } i \in [m]
\end{align*}
where $\bold{L} := [\bold{\ell_1}~\dots~\bold{\ell_m}] \in \mathbb{R}^{\dim(\mathcal{L}) \times m}$ is a matrix whose rows form a basis for $\mathcal{L}$ and $\bold{x_0} \in P$. Observe that the resulting extended map has no effect on the spectrahedron $P$ itself. We only include it to obtain a more proper algebraic representation, see also~\cite{RamanaGoldman, DeMeijerThesis}.}
We define the integer hull of $P$ to be $P_I := \Conv(P \cap \mathbb{Z}^m)$, i.e., the convex hull of the integral points in $P$. {We briefly consider some illustrative examples of spectrahedra and their integer hulls.}

\begin{example}[Examples in $\mathbb{R}^2$] \label{Example:spectrahedra} 
Let $\bold{C} = {\begin{bsmallmatrix} 
    0 & 3 \\ 3 & 3
\end{bsmallmatrix}}, \bold{A_1} = \begin{bsmallmatrix}
    -3 & 1 \\ 1 & 1
\end{bsmallmatrix}$ and $\bold{A_2} = \begin{bsmallmatrix}
    \frac{1}{2} & 1 \\ 1 & 0
\end{bsmallmatrix}$. Then, the induced spectrahedron $P$ in the dual form~\eqref{def:P} is the semialgebraic set of points in $\mathbb{R}^2$ described by the quadratic inequality $4x_1^2 + x_2^2 \leq 15x_1 + 4\frac{1}{2}x_2 - 1\frac{1}{2}x_1x_2 - 9$. This spectrahedron is bounded and given in Figure~\ref{Fig:example_bounded}.

Let $Q$ be described by~\eqref{def:P} with $\bold{C} = \begin{bsmallmatrix}
    1 & 0 \\ 0 & 0
\end{bsmallmatrix}, \bold{A_1} = \begin{bsmallmatrix}
    0 & -1 \\ -1 & 0
\end{bsmallmatrix}$ and $\bold{A_2} = \begin{bsmallmatrix}
    0 & 0 \\ 0 & -2
\end{bsmallmatrix}$. The spectrahedron $Q$ is the unbounded semialgebraic set $\{\bold{x} \in \mathbb{R}^2 \, : \, \, x_2 \geq \frac{1}{2}x_1^2\}$, see Figure~\ref{Fig:example_unbounded}. 

\begin{figure}[H]
    \centering
    \begin{subfigure}[b]{0.48\textwidth}
        \includegraphics[scale=0.3]{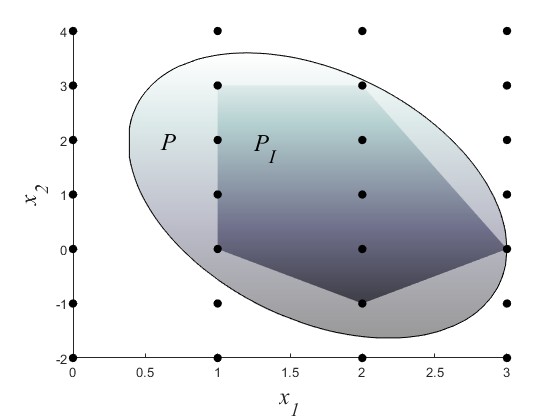}
        \caption{Bounded spectrahedron $P$ \label{Fig:example_bounded}}
    \end{subfigure}
    \begin{subfigure}[b]{0.48\textwidth}
        \includegraphics[scale=0.3]{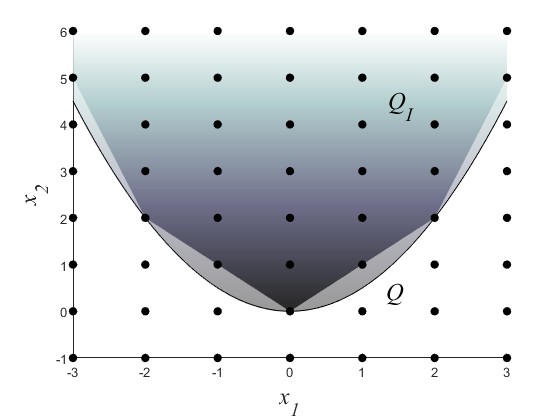}
        \caption{Unbounded spectrahedron $Q$ \label{Fig:example_unbounded}}
    \end{subfigure}
    \caption{Spectrahedra $P$ and $Q$ defined in Example~\ref{Example:spectrahedra}. Their corresponding integer hulls are given by the dark gradient areas.}
\end{figure}

\end{example}

\begin{example}[Example in $\mathbb{R}^3$] \label{Example:spectrahedra3D} 
    Let $\bold{C} = \begin{bsmallmatrix}
        1 & 2 \\ 2 & 2
    \end{bsmallmatrix} \oplus  \begin{bsmallmatrix}
        5 & 0 \\ 0 & 5
    \end{bsmallmatrix} , \bold{A_1} = \begin{bsmallmatrix}
        -1 & \frac{1}{2} \\ \frac{1}{2} & 1
    \end{bsmallmatrix} \oplus  \begin{bsmallmatrix}
        0 & 0 \\ 0 & 0
    \end{bsmallmatrix}, \bold{A_2} = \begin{bsmallmatrix}
        -\frac{3}{5} & \frac{3}{10} \\ \frac{3}{10} & 0
    \end{bsmallmatrix} \oplus  \begin{bsmallmatrix}
        1 & 0 \\ 0 & -1
    \end{bsmallmatrix}$ and $\bold{A_3} = \begin{bsmallmatrix}
        \frac{1}{2} & 2 \\ 2 & -3
    \end{bsmallmatrix} \oplus  \begin{bsmallmatrix}
        0 & 0 \\ 0 & 0
    \end{bsmallmatrix}$ and let $P$ be the induced spectrahedron of the form ~\eqref{def:P}. Then, $P$ is the semialgebraic set in $\mathbb{R}^3$ described by the inequalities $1\frac{1}{4}x_1^2 + \frac{9}{100}x_2^2 + 5\frac{1}{2}x_3^2 \leq -2 + 3x_1 + 2\frac{2}{5}x_2 + 10x_3 - \frac{9}{10} x_1x_2 + \frac{3}{5}x_2x_3 + 1\frac{1}{2}x_1x_3$, $1 + x_1 + \frac{3}{5}x_2 - \frac{1}{2}x_3 \geq 0$, $2 - x_1 + 3x_3 \geq 0$, $-5 \leq x_2$ and $x_2 \leq 5$, see Figure~\ref{Fig:spectra3D}. 

    \begin{figure}[H]
    \centering
    \includegraphics[scale = 0.25]{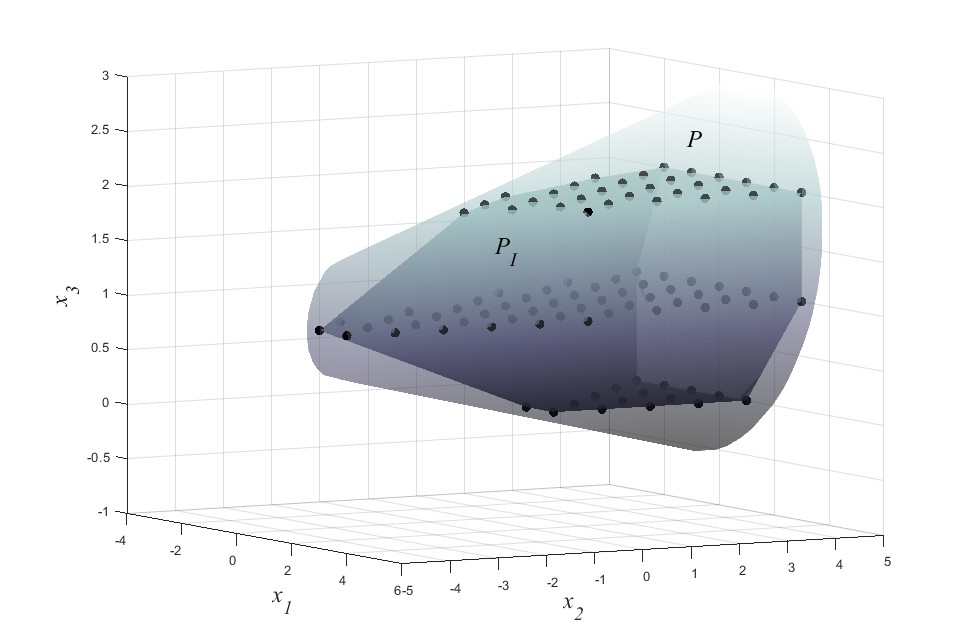}
    \caption{Spectrahedron $P$ in $\mathbb{R}^3$ defined in Example~\ref{Example:spectrahedra3D}. \label{Fig:spectra3D}}
\end{figure}

\end{example}

In the remaining part of this section we  study the  elementary closure,  see Definition~\ref{Def:elementaryclosure},  of spectrahedra in primal and dual standard {forms}. 
    The proofs of several results that will follow rely on the following semidefinite version of the theorem of alternatives, see e.g., Balakrishnan and Vandenberghe~\cite{Balakrishnan}.

\begin{proposition}[Theorem of the alternatives for SDP \cite{Balakrishnan}] \label{Prop:Farkas}
Let $\bold{C}, \bold{A_1}, \ldots, \bold{A_m} \in \mathcal{S}^n$. Then at most one of the following is true:
\begin{enumerate}
\item There exists an $\bold{X} \succ \bold{0}$, $\langle \bold{A_i}, \bold{X} \rangle = 0$ for all $i \in [m]$ and $\langle \bold{C}, \bold{X} \rangle \leq 0$;
\item There exists an $\bold{x} \in \mathbb{R}^m$ such that $\bold{C} - \sum_{i = 1}^m\bold{A_i}x_i \succneqq \bold{0}$.
\end{enumerate}
Moreover, if there exists no $\bold{x} \in \mathbb{R}^m$ such that $\sum_{i = 1}^m\bold{A_i}x_i  \succneqq \bold{0}$, then exactly one of the statements above is true.
\end{proposition}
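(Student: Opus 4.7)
My plan is to handle the two assertions separately: the first by a direct inner-product computation, the second by a separating hyperplane argument combined with a positive definite perturbation. For the ``at most one'' assertion, I would assume both alternatives simultaneously and compute $\langle \bold{X}, \bold{C} - \sum_i \bold{A_i} x_i \rangle$ in two ways. On one hand, this quantity is strictly positive, because the trace inner product of a positive definite matrix and a nonzero positive semidefinite matrix is strictly positive; on the other hand, using $\langle \bold{A_i}, \bold{X} \rangle = 0$ from statement (1), it simplifies to $\langle \bold{C}, \bold{X} \rangle \le 0$, a contradiction.

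For the ``exactly one'' assertion, suppose (2) fails and write $V := \Span(\bold{A_1}, \ldots, \bold{A_m})$; the extra hypothesis then reads $V \cap \mathcal{S}^n_+ = \{\bold{0}\}$. I would first establish a sub-lemma: there exists $\bold{X}_0 \succ \bold{0}$ with $\langle \bold{A_i}, \bold{X}_0 \rangle = 0$ for all $i$. This follows by strictly separating the closed subspace $V$ from the compact convex set $\{\bold{Y} \succeq \bold{0} \, : \, \tr(\bold{Y}) = 1\}$, which is disjoint from $V$ by the hypothesis. Since $V$ is a subspace, the separator is orthogonal to $V$; since it has strictly positive inner product with every trace-one PSD matrix, self-duality of $\mathcal{S}^n_+$ upgrades it to $\bold{X}_0 \succ \bold{0}$.

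Then I would split into cases on whether $\bold{C} \in V$. If $\bold{C} \in V$, the orthogonality gives $\langle \bold{C}, \bold{X}_0 \rangle = 0$ and $\bold{X}_0$ already satisfies (1). If $\bold{C} \notin V$, the failure of (2) strengthens to $\bold{C} \notin V + \mathcal{S}^n_+$: any decomposition $\bold{C} = \sum_i \bold{A_i} x_i + \bold{Y}$ with $\bold{Y} \succeq \bold{0}$ would force $\bold{Y} = \bold{0}$ (putting $\bold{C}$ in $V$) or $\bold{Y} \succneqq \bold{0}$ (contradicting the failure of (2)). The set $V + \mathcal{S}^n_+$ is closed under the extra hypothesis, by a standard sequential compactness argument in which any escape to infinity produces, after normalization, a unit-norm element of $V \cap \mathcal{S}^n_+$. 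Strictly separating $\bold{C}$ from this closed set produces $\bold{X} \succeq \bold{0}$ with $\langle \bold{A_i}, \bold{X} \rangle = 0$ for all $i$ and $\langle \bold{C}, \bold{X} \rangle < 0$, and the perturbation $\bold{X}' := \bold{X} + \varepsilon \bold{X}_0$ for sufficiently small $\varepsilon > 0$ is positive definite, remains orthogonal to each $\bold{A_i}$, and still satisfies $\langle \bold{C}, \bold{X}' \rangle < 0$.

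The main obstacle I expect is producing a strictly positive definite separator rather than a merely positive semidefinite one, since generic hyperplane separation against the PSD cone only yields the latter. This is exactly where the extra hypothesis does double duty: it supplies the PD witness $\bold{X}_0$ through the sub-lemma, and it ensures the closedness of $V + \mathcal{S}^n_+$ so that \emph{strict} (rather than merely proper) separation of $\bold{C}$ is available.
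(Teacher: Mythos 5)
Your proof is correct, but there is nothing in the paper to compare it against: the authors state Proposition~2 as a known result imported from Balakrishnan and Vandenberghe and give no proof of their own. Your argument is essentially the standard separating-hyperplane proof of the SDP Farkas lemma, and all of its steps check out. The ``at most one'' direction is right: with $\bold{Z} := \bold{C} - \sum_i \bold{A_i}x_i \succneqq \bold{0}$ and $\bold{X} \succ \bold{0}$ one has $\langle \bold{X}, \bold{Z}\rangle = \tr(\bold{X}^{1/2}\bold{Z}\bold{X}^{1/2}) > 0$, while orthogonality to the $\bold{A_i}$ collapses it to $\langle \bold{C},\bold{X}\rangle \le 0$. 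For the ``exactly one'' direction, your sub-lemma does deliver a positive definite $\bold{X}_0 \in V^\perp$: separating the subspace $V$ from the compact slice $\{\bold{Y}\succeq\bold{0} : \tr\bold{Y}=1\}$ forces the separator to vanish on $V$ and to satisfy $\bold{v}^\top \bold{X}_0 \bold{v} \ge \beta \Vert\bold{v}\Vert^2 > 0$ for all $\bold{v}$, which is exactly $\bold{X}_0\succ\bold{0}$. The closedness of $V + \mathcal{S}^n_+$ is correctly reduced to the hypothesis $V \cap \mathcal{S}^n_+ = \{\bold{0}\}$ (an unbounded sequence $\bold{v}_k + \bold{Y}_k$ with bounded sum yields, after normalization and passage to a subsequence, a unit-norm element of $V \cap \mathcal{S}^n_+$), strict separation of $\bold{C}$ from this closed cone gives $\bold{X}\succeq\bold{0}$, $\bold{X}\perp V$, $\langle\bold{C},\bold{X}\rangle<0$, and the perturbation $\bold{X}+\varepsilon\bold{X}_0$ finishes the argument. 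The only cosmetic remark is that in the case $\bold{C}\in V$ the failure of alternative~(2) is automatic from the hypothesis, so the case split could be presented as a dichotomy on $\bold{C}\in V$ alone; this does not affect correctness.
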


Using the fact that a matrix $\bold{C} - \sum_{i = 1}^m \bold{A_i}x_i$ is positive semidefinite if and only if $\langle \bold{C} - \sum_{i = 1}^m \bold{A_i}x_i , \bold{U} \rangle \geq 0$ for all $\bold{U} \in \mathcal{S}^n_+$, we can rewrite $P$ as follows:
\begin{align}
P & = \left\{ \bold{x} \in \mathbb{R}^m \, : \, \, \langle \bold{C} - \sum_{i = 1}^m \bold{A_i}x_i, \bold{U} \rangle \geq 0 , \, \bold{U} \in \mathcal{S}^n_+ \right\} \nonumber \\
& = \bigcap_{\bold{U} \in \mathcal{S}^n_+} \left\{ \bold{x} \in \mathbb{R}^m \, : \, \, \sum_{i = 1}^m x_i \langle \bold{A_i}, \bold{U} \rangle \leq \langle \bold{C}, \bold{U} \rangle  \right\}. \label{P:dual}
\end{align}
Moreover, since $P$ is a closed convex set, we can write $P$ as the intersection of the halfspaces that contain it:
\begin{align} \label{P:intersect}
P = \bigcap_{\substack{(\bold{c},d) \in \mathbb{R}^{m+1} \\ P \subseteq \{\bold{x} \, : \, \, \bold{c}^\top \bold{x} \leq d \}}} \left\{ \bold{x} \in \mathbb{R}^m \, : \, \, \bold{c}^\top \bold{x} \leq d \right\}.
\end{align}
It is clear that all halfspaces in the intersection of \eqref{P:dual} are contained in the intersection \eqref{P:intersect}. The converse statement is also true, as stated by the following theorem.This theorem was proven in~\cite{DeMeijerThesis} and also related to the algebraic polar studied in~\cite{RamanaGoldman}.

\begin{theorem}[\cite{DeMeijerThesis,RamanaGoldman}] \label{Thm:dualcut} Let $P = \{\bold{x} \in \mathbb{R}^m \, : \, \, \bold{C} - \sum_{i = 1}^m \bold{A_i}x_i \succeq \bold{0} \}$ be a non-empty spectrahedron.
Let $(\bold{c},d) \in \mathbb{R}^{m+1}$ be such that $P \subseteq \{ \bold{x} \in \mathbb{R}^m \, : \, \, \bold{c}^\top \bold{x} \leq d \}$. Then there exists a matrix $\bold{U} \in \mathcal{S}^n_+$ such that $\langle \bold{A_i}, \bold{U} \rangle = c_i$ for all $i \in [m]$ and $\langle \bold{C}, \bold{U} \rangle \leq d$.
\end{theorem}

Using the representation of $P$ given by \eqref{P:dual} and the result of Theorem~\ref{Thm:dualcut}, we now provide an alternative formulation of the elementary closure for spectrahedra of the form $P$. We have,
\begin{align} \label{eq:coniccuts}
\cl(P) = \bigcap_{\substack{\bold{U} \in \mathcal{S}^n_+ \, \text{s.t.} \\ \langle \bold{A_i}, \bold{U} \rangle \in \mathbb{Z}, ~i \in [m]}} \left\{ x \in \mathbb{R}^m \, : \, \, \sum_{i = 1}^m x_i \langle \bold{A_i}, \bold{U} \rangle \leq \lfloor \langle \bold{C}, \bold{U} \rangle \rfloor  \right\}.
\end{align}
Hence, any possible CG cut for a spectrahedron is constructed by a matrix $\bold{U} \in \mathcal{S}^n_+$ such that $\langle \bold{A_i}, \bold{U} \rangle \in \mathbb{Z}$ for $i \in [m]$.

A similar alternative definition of the elementary closure of spectrahedra in standard primal form can be obtained. Let $Q \subseteq \mathcal{S}^n$ denote the continuous relaxation of the feasible set of \eqref{primalISDP}, i.e.,
\begin{align*}
Q & = \left\{ \bold{X} \in \mathcal{S}^n \, : \, \, \langle \bold{A_i}, \bold{X} \rangle = b_i, i \in [m], \, \bold{X} \succeq \mathbf{0} \right\} \\
&  = \left\{ \bold{X} \in \mathcal{S}^n \, : \, \, \langle \bold{A_i}, \bold{X} \rangle = b_i, i \in [m], \, \langle \bold{X}, \bold{U} \rangle \geq 0, \, \bold{U} \in \mathcal{S}^n_+ \right\} \\
&  = \left\{ \bold{X} \in \mathcal{S}^n \, : \, \, \left\langle \bold{X}, \bold{U} + \sum_{i = 1}^m \bold{A_i} \lambda_i \right\rangle \geq \sum_{i = 1}^m b_i \lambda_i, \, \bold{U} \in \mathcal{S}^n_+, \, \boldsymbol{\lambda} \in \mathbb{R}^m \right\},
\end{align*}
where the last equality follows from the fact that the choices $(\bold{U}, \boldsymbol{\lambda}) = (\bold{0}, \mathbf{e}_i )$ and $(\bold{U}, \boldsymbol{\lambda} ) = (\bold{0}, -\mathbf{e}_i)$ lead to the cuts $\langle \bold{A_i}, \bold{X} \rangle \geq b_i$ and $\langle \bold{A_i}, \bold{X} \rangle \leq b_i$, respectively. Now, the elementary closure of $Q$ can be described by the following intersection of CG cuts:
\begin{align} \label{CGclosure:primal}
\cl(Q) = \bigcap_{\substack{(\bold{U},\boldsymbol{\lambda}) \in \mathcal{S}^n_+ \times \mathbb{R}^m \, \text{s.t.} \\ \bold{U} + \sum_{i =1}^m \bold{A_i} \lambda_i \in \mathbb{Z}^{n \times n}}} \left\{ \bold{X} \in \mathcal{S}^n \, : \, \, \left\langle \bold{X}, \bold{U} + \sum_{i = 1}^m \bold{A_i} \lambda_i \right\rangle \geq \left\lceil \sum_{i = 1}^m b_i \lambda_i \right\rceil \right\}.
\end{align}
For many SDPs resulting from applications the spectrahedra that define the feasible sets are contained in the cone of non-negative vectors or matrices. When $P \subseteq \mathbb{R}^m_+$ or $Q \subseteq \{\bold{X} \in \mathbb{R}^{n \times n} \, : \, \, \bold{X} \geq \bold{0}\}$, alternative equivalent formulations of the elementary closure can be given, see also \cite{CezikIyengar}.

\begin{theorem}
Let $P = \left\{ \bold{x} \in \mathbb{R}^m_+ \, : \, \, \bold{C} - \sum_{i = 1}^m \bold{A_i}x_i \succeq \bold{0} \right\}$ { be a non-empty spectrahedron}. Then $\cl(P)$ can equivalently be written as
\begin{align} \label{eq:coniccuts2}
\cl(P) = \bigcap_{\bold{U} \in \mathcal{S}^n_+} \left\{ \bold{x} \in \mathbb{R}^m \, : \, \, \sum_{i = 1}^m x_i \lfloor \langle \bold{A_i}, \bold{U} \rangle \rfloor \leq \lfloor \langle \bold{C}, \bold{U} \rangle \rfloor  \right\}.
\end{align}
Similarly, let $Q = \left\{ \bold{X} \in \mathcal{S}^n \, : \, \, \langle \bold{A_i}, \bold{X} \rangle = b_i, i \in [m], \bold{X} \succeq \bold{0}, \bold{X} \geq \bold{0} \right\}$. Then $\cl(Q)$ can equivalently be written as
\begin{align}
\cl(Q) = \bigcap_{(\bold{U},\boldsymbol{\lambda}) \in \mathcal{S}^n_+ \times \mathbb{R}^m } \left\{ \bold{X} \in \mathcal{S}^n  \, : \, \, \left\langle \bold{X}, \left\lceil \bold{U} + \sum_{i = 1}^m \bold{A_i} \lambda_i \right\rceil \right\rangle \geq \left\lceil \sum_{i = 1}^m b_i \lambda_i \right\rceil \right\}.
\end{align}
\end{theorem}
\begin{proof}
We prove the statement for the dual form \eqref{eq:coniccuts2}. The proof for the primal form is similar.

Let $\overline{\cl(P)} := \bigcap_{\bold{U} \in \mathcal{S}^n_+} \left\{ \bold{x} \in \mathbb{R}^m \, : \, \, \sum_{i = 1}^m x_i \lfloor \langle \bold{A_i}, \bold{U} \rangle \rfloor \leq \lfloor \langle \bold{C}, \bold{U} \rangle \rfloor  \right\}$ and let $\cl(P)$ be as given in \eqref{eq:coniccuts}. The inclusion $\overline{\cl(P)} \subseteq \cl(P)$ is obvious, as any halfspace in the intersection defining $\cl(P)$ is also in the intersection defining $\overline{\cl(P)}$. Now, consider a halfspace $\bar{H} = \{ \bold{x} \in \mathbb{R}^m \, : \, \, \sum_{i = 1}^m x_i \lfloor \langle \bold{A_i}, \bold{U} \rangle \rfloor \leq \lfloor \langle \bold{C}, \bold{U} \rangle \rfloor \}$ for some $\bold{U} \in \mathcal{S}^n_+$, that is included in the intersection defining $\overline{\cl(P)}$. Since $P \subseteq \mathbb{R}^n_+$, we know
\begin{align*}
P  \subseteq \left\{ \bold{x} \in \mathbb{R}^m_+ \, : \, \, \sum_{i = 1}^m x_i \langle \bold{A_i}, \bold{U} \rangle \leq  \langle \bold{C}, \bold{U} \rangle   \right\} & \subseteq \left\{ \bold{x} \in \mathbb{R}^m_+ \, : \, \, \sum_{i = 1}^m x_i \lfloor \langle \bold{A_i}, \bold{U} \rangle \rfloor \leq  \langle \bold{C}, \bold{U} \rangle   \right\} \\
& \subseteq \left\{ \bold{x} \in \mathbb{R}^m \, : \, \, \sum_{i = 1}^m x_i \lfloor \langle \bold{A_i}, \bold{U} \rangle \rfloor \leq  \langle \bold{C}, \bold{U} \rangle   \right\}.
\end{align*}
Now we apply Theorem~\ref{Thm:dualcut} to the latter halfspace. It follows that there exists a matrix $\bold{V} \in \mathcal{S}^n_+$ such that
$$
\langle \bold{A_i}, \bold{V} \rangle = \lfloor \langle \bold{A_i}, \bold{U} \rangle \rfloor \quad \text{for all } i \in [m], \quad \text{and} \quad \langle \bold{C}, \bold{V} \rangle \leq \langle \bold{C}, \bold{U} \rangle.
$$
We define the halfspace $H := \{ \bold{x} \in \mathbb{R}^m \, : \, \, \sum_{i = 1}^m x_i \langle \bold{A_i}, \bold{V} \rangle \leq \lfloor \langle \bold{C}, \bold{V}\rangle \rfloor \}$. Since $\lfloor \langle \bold{C}, \bold{V} \rangle \rfloor \leq \lfloor \langle \bold{C}, \bold{U} \rangle \rfloor$, it follows that the halfspace $\bar{H}$ contains the halfspace $H$, while $H$ is contained in the intersection of $\cl(P)$ given in \eqref{eq:coniccuts}. Since this construction can be repeated for all halfspaces in the intersection \eqref{eq:coniccuts2} defining $\overline{\cl(P)}$, it follows that $\cl(P) \subseteq \overline{\cl(P)}$.
\end{proof}

\begin{figure}[H]
\begin{minipage}{0.35\textwidth}
\begin{example} 
Let us reconsider the bounded spectrahedron $P$ defined in Example~\ref{Example:spectrahedra}. The elementary closure $\cl(P)$ of this spectrahedron is the intersection of six rational halfspaces, represented by the dashed lines in Figure~\ref{Fig:example_bounded_closure}. Each such halfspace is obtained from a rational halfspace $\{\bold{x} \in \mathbb{R}^2 \, : \, \, \bold{c}^\top \bold{x} \leq d\}$ containing $P$, where $d$ is shifted towards $P_I$ until the corresponding hyperplane hits an integral point. The integer hull $P_I$ is the intersection of only five halfspaces. Thus, for this example we have $P_I \subsetneq \cl(P) \subsetneq P$.
\end{example} 
\end{minipage} \hfill\begin{minipage}{0.6\textwidth}
\centering
\includegraphics[scale=0.7]{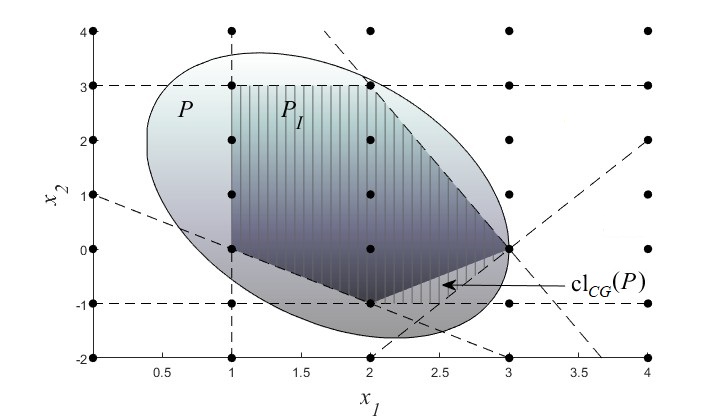}
\caption{Spectrahedron $P$, its integer hull $P_I$ and its elementary closure $\cl(P)$. \label{Fig:example_bounded_closure}}
\end{minipage}
\end{figure}

In Section~\ref{Section:TDI} we provide a polyhedral description of the elementary closure of spectrahedra that satisfy the notion of total dual integrality.

\subsection{The Chv\'atal rank of bounded spectrahedra}
In this section we derive several results on the sequence of relaxations resulting from the Chv\'atal-Gomory procedure. Although some of these results are already known for general compact convex sets, we provide simplified proofs for the case of bounded spectrahedra.
Throughout this section we assume $P$ to be a spectrahedron of the form \eqref{def:P} that is bounded. For unbounded sets it is in general not even clear whether $C^{(k+1)} \subseteq C^{(k)}$.

 {It is known that the Chv\'atal rank of a compact convex set  is finite, including the special case of bounded spectrahedra. 
 This result follows from the polyhedrality result of Dadush et al.~\cite{DadushEtAl}
  and the folklore that the Chv\'atal rank of a rational polytope is finite due to  Chv\'atal~\cite{Chvatal}. }
\begin{proposition}[\cite{Chvatal,DadushEtAl}] \label{Thm:FiniteCGsequence} Let $P = \left\{ \bold{x} \in \mathbb{R}^m \, : \, \, \bold{C} - \sum_{i = 1}^m \bold{A_i}x_i \succeq \bold{0} \right\}$ be bounded. Then, $P^{(k)} = P_I$ for some finite $k$.
\end{proposition} 

Next, we aim to prove a homogeneity property of the CG procedure for bounded spectrahedra, which states that the elementary closure operation commutes with taking the intersection with supporting hyperplanes. This property plays a key role in showing that the elementary closure of $P$ is a rational polytope, following the proof of Braun and Pokutta \cite{BraunPokutta}. We provide a simplified proof of this property for bounded spectrahedra, which can be seen as the conic analogue to a polyhedral result of Schrijver \cite{Schrijver}. In the proof we restrict ourselves to halfspaces of the form $\{\bold{x} \in \mathbb{R}^m \, : \, \, \bold{w}^\top \bold{x} \leq d\}$ where $\bold{w} \in \mathbb{Z}^m$ and $d \in \mathbb{R}$. It follows from Proposition~\ref{Prop:compactouter} that these halfspaces are sufficient to describe a compact convex set.

Before we show the main theorem, we need a chain of intermediate results, starting with a {proposition regarding the condition of Proposition~\ref{Prop:Farkas}}.

\begin{proposition} \label{Prop:CompactFull}
Let $P = \left\{ \bold{x} \in \mathbb{R}^m \, : \, \, \bold{C} - \sum_{i = 1}^m \bold{A_i}x_i \succeq \mathbf{0} \right\}$ be a non-empty and bounded spectrahedron. Then there does not exist an $\bold{x} \in \mathbb{R}^m$ such that $\sum_{i = 1}^m \bold{A_i}x_i \succneqq \bold{0}$.
\end{proposition}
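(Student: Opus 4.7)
The plan is to argue by contradiction. Suppose there exists $\bold{\bar{x}} \in \mathbb{R}^m$ with $\sum_{i=1}^m \bold{A_i} \bar{x}_i \succneqq \bold{0}$. I would use $\bold{\bar{x}}$ as a recession direction to construct an unbounded ray inside $P$, which would contradict boundedness.

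Since $P$ is non-empty, fix some $\bold{y} \in P$, so that $\bold{C} - \sum_{i=1}^m \bold{A_i} y_i \succeq \bold{0}$. For any scalar $t \geq 0$, consider the point $\bold{y} - t\bold{\bar{x}}$. Its slack matrix satisfies
$$\bold{C} - \sum_{i=1}^m \bold{A_i}(y_i - t\bar{x}_i) = \left(\bold{C} - \sum_{i=1}^m \bold{A_i}y_i\right) + t\sum_{i=1}^m \bold{A_i}\bar{x}_i.$$
Both summands on the right-hand side are positive semidefinite (the first by feasibility of $\bold{y}$, the second by the assumption on $\bold{\bar{x}}$ together with $t \geq 0$), so their sum is positive semidefinite, and hence $\bold{y} - t\bold{\bar{x}} \in P$ for every $t \geq 0$.

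It remains to observe that $\bold{\bar{x}} \neq \bold{0}$, for then the ray $\{\bold{y} - t\bold{\bar{x}} \, : \, t \geq 0\}$ is genuinely unbounded in norm, contradicting the boundedness of $P$. This is immediate from the definition of $\succneqq \bold{0}$ in Section~\ref{sect:notation}: if all components of $\bold{\bar{x}}$ vanished, then $\sum_{i=1}^m \bold{A_i}\bar{x}_i$ would equal the zero matrix, violating the requirement that this matrix be positive semidefinite \emph{and} distinct from $\bold{0}$.

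I do not anticipate a serious obstacle here—the argument is simply the standard recession-direction argument for convex sets, relying only on closure of $\mathcal{S}^n_+$ under nonnegative linear combinations. The one point that merits a sentence of care is the precise reading of $\succneqq \bold{0}$: it is this nonzeroness condition, rather than strict positive definiteness, that is needed to force $\bold{\bar{x}} \neq \bold{0}$ and thus drive the ray to infinity.
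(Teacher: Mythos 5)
Your proof is correct and is essentially identical to the paper's own argument: both fix a feasible point, use $\bold{\bar{x}}$ as a recession direction so that the slack matrix along the ray is a sum of two positive semidefinite matrices, and conclude unboundedness after noting $\bold{\bar{x}} \neq \bold{0}$. Your extra sentence spelling out why $\succneqq \bold{0}$ forces $\bold{\bar{x}} \neq \bold{0}$ is a small but welcome elaboration of what the paper dismisses with ``clearly.''
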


\begin{proof}
Since $P$ is non-empty, there exists a point $\bold{x^*} \in P$, i.e., $\bold{C} - \sum_{i = 1}^m\bold{A_i}x^*_i \succeq \mathbf{0}$. Now suppose there exists a point $\bold{\hat{x}}$ such that  $\sum_{i = 1}^m \bold{A_i}\hat{x}_i \succneqq \bold{0}$. Then clearly $\bold{\hat{x}} \neq \mathbf{0}_m$ and for all $t \geq 0$ we have
$$
\bold{C} - \sum_{i = 1}^m\bold{A_i}x^*_i + t \sum_{i = 1}^m\bold{A_i}\hat{x}_i = \bold{C} - \sum_{i = 1}^m \bold{A_i}(x^*_i - t\hat{x}_i) \succeq \mathbf{0},
$$
i.e., $\bold{x^*} - t\bold{\hat{x}} \in P$ for all $t \geq 0$. Thus, $P$ is unbounded, so such $\bold{\hat{x}}$ cannot exist.
\end{proof}

We also need  Dirichlet's approximation theorem and its weakened version.
\begin{proposition}[Dirichlet's Approximation Theorem] \label{Prop:Dirichlet}
Let $d \in \mathbb{R}$ and $N \geq 2$ be a positive integer. Then there exist integers $p$ and $q$ with {$1\leq p \leq N$} such that $|pd - q| \leq \frac{1}{N}$.
\end{proposition}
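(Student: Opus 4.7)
The plan is to prove this via a standard pigeonhole argument on the fractional parts of the multiples of $d$. Let $\{t\} := t - \lfloor t \rfloor \in [0,1)$ denote the fractional part of a real number $t$. I would first consider the $N+1$ real numbers
\begin{align*}
\{0 \cdot d\}, \, \{1 \cdot d\}, \, \{2 \cdot d\}, \, \ldots, \, \{N \cdot d\},
\end{align*}
each of which lies in the half-open interval $[0,1)$.

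Next, I would partition $[0,1)$ into $N$ disjoint subintervals of length $1/N$, namely $[0, 1/N), [1/N, 2/N), \ldots, [(N-1)/N, 1)$. By the pigeonhole principle, at least two of the $N+1$ fractional parts above must lie in the same subinterval; say $\{id\}$ and $\{jd\}$ with $0 \leq i < j \leq N$. Consequently,
\begin{align*}
|\{jd\} - \{id\}| \leq \frac{1}{N}.
\end{align*}

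Finally, I would unwind the definitions. Writing out the fractional parts gives
\begin{align*}
\{jd\} - \{id\} = (jd - \lfloor jd \rfloor) - (id - \lfloor id \rfloor) = (j-i)d - \bigl(\lfloor jd \rfloor - \lfloor id \rfloor\bigr).
\end{align*}
Setting $p := j - i$ and $q := \lfloor jd \rfloor - \lfloor id \rfloor$, we have $p,q \in \mathbb{Z}$, $p > 0$ because $j > i$, and $|pd - q| \leq 1/N$, which is the desired conclusion.

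There is no substantial obstacle here: the argument is the textbook pigeonhole proof. The only mild points to verify are that $p$ is strictly positive (which follows from $i < j$) and that the inequality is non-strict, as stated (the pigeonhole bound on the difference of two points in an interval of length $1/N$ is $\leq 1/N$). Since the proposition is invoked later as a known fact, this short self-contained derivation should suffice.
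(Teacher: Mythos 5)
Your pigeonhole argument is correct and complete; note that the paper does not prove this proposition at all, but simply cites it as a known result from the literature, so there is no in-paper proof to compare against. Your derivation is the standard textbook one, and in fact the pigeonhole step gives the strict bound $|\{jd\}-\{id\}|<\frac{1}{N}$ (two points in the same half-open cell of length $\frac{1}{N}$), which is even slightly stronger than the stated $\leq\frac{1}{N}$ and certainly suffices for the way the proposition is used in Corollary~\ref{Cor:onesidedDirichlet}.
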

We now derive its one-sided variant below.

\begin{corollary}[One-sided Approximation Theorem] 
\label{Cor:onesidedDirichlet} Let $d \in \mathbb{R}$ and $N \geq 2$ be a positive integer number. Then there exists an integer $p \in \mathbb{Z}_+$ such that
$d - \lfloor p d \rfloor \leq \frac{1}{N}.$
\end{corollary}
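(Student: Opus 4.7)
The plan is to derive the corollary from Dirichlet's approximation theorem (Proposition~\ref{Prop:Dirichlet}) by first getting a two-sided bound and then converting it into a one-sided one via a multiplication trick when the sign comes out the wrong way.

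First I would invoke Proposition~\ref{Prop:Dirichlet} with the given $N$ to obtain integers $p_1 > 0$ and $q_1$ satisfying $|p_1 d - q_1| \leq 1/N$. Set $\varepsilon := p_1 d - q_1$, so $\varepsilon \in [-1/N, 1/N]$. Since $N \geq 2$ we have $|\varepsilon| < 1$, which lets us read off the floor function cleanly. I would then split into two cases.

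In the easy case $\varepsilon \geq 0$, we have $q_1 \leq p_1 d < q_1 + 1$, hence $\lfloor p_1 d \rfloor = q_1$, and $p_1 d - \lfloor p_1 d \rfloor = \varepsilon \leq 1/N$; take $p = p_1$. In the remaining case $\varepsilon < 0$, the point $p_1 d$ sits just below the integer $q_1$, which is the wrong side; the trick is to take a positive multiple $k p_1$ with $k \in \mathbb{Z}_+$ so that $k|\varepsilon|$ lands in $[1 - 1/N, 1)$. Since the successive values $k|\varepsilon|$ advance in steps of $|\varepsilon| \leq 1/N$ and the target interval has length $1/N$, the choice $k := \lceil (1 - 1/N)/|\varepsilon| \rceil$ does the job: by construction $k|\varepsilon| \geq 1 - 1/N$, and the upper bound $k < (1-1/N)/|\varepsilon| + 1$ combined with $|\varepsilon| \leq 1/N$ yields $k|\varepsilon| < 1$. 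Setting $p := k p_1$ then gives
\begin{align*}
p d = k q_1 + k \varepsilon = (k q_1 - 1) + (1 - k|\varepsilon|),
\end{align*}
and since $1 - k|\varepsilon| \in (0, 1/N]$ we read off $\lfloor p d \rfloor = k q_1 - 1$ and $p d - \lfloor p d \rfloor = 1 - k|\varepsilon| \leq 1/N$, as desired.

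The only real obstacle is the case $\varepsilon < 0$, and specifically making sure that the multiplier $k$ lands the quantity $k|\varepsilon|$ in the half-open window $[1 - 1/N, 1)$. This requires that the step size $|\varepsilon|$ is not larger than the window width $1/N$, which is exactly what Dirichlet's bound guarantees; this is where the hypothesis is used. The endpoint case $\varepsilon = 0$ is absorbed into the easy case, and no arithmetic subtlety arises from $d$ being rational versus irrational.
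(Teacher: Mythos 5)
Your proposal is correct and follows essentially the same route as the paper: apply Dirichlet's theorem, handle the case $p_1 d \geq q_1$ directly, and in the case $p_1 d < q_1$ take the smallest positive multiple $k$ for which $k|p_1 d - q_1|$ reaches $1 - \tfrac{1}{N}$, using the fact that the step size $|p_1 d - q_1| \leq \tfrac{1}{N}$ prevents overshooting past $1$. Your explicit choice $k = \lceil (1-\tfrac{1}{N})/|\varepsilon| \rceil$ is exactly the paper's minimal $M$, and the verification is sound.
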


\begin{proof}
By Dirichlet's Theorem, we know that for the given $d$ and $N$, there exist integers $q_1$ and $q_2$ with { $1\leq q_1 \leq N$} such that $| q_1 d - q_2 | \leq \frac{1}{N}$. If $q_1d \geq q_2$, then we have
$q_1d - \lfloor q_1 d \rfloor \leq q_1 d - q_2 = |q_1 d - q_2 | \leq \frac{1}{N},$
so the choice $p = q_1$ leads to the desired result.
Next, we consider the case $q_1 d < q_2$, for which we have $-\frac{1}{N} \leq q_1 d - q_2 < 0$. Let $M \geq 1$ be the smallest integer such that $M(q_1 d - q_2) \leq -\frac{N-1}{N}$, which exists because $q_1 d - q_2 < 0$. For this $M$ we must have $-1 \leq M(q_1 d - q_2)$. Namely, if $M(q_1 d - q_2) < -1$, then $(M-1)(q_1 d - q_2) \leq -\frac{N-1}{N}$, contradicting the minimality of $M$. Thus,
\begin{align*}
-1 \leq M (q_1d - q_2) \leq -\frac{N-1}{N} \quad \Longleftrightarrow \quad 0 \leq Mq_1 d - (Mq_2 - 1) \leq \frac{1}{N}.
\end{align*}
Since $Mq_2 - 1$ is integer, it follows that
$Mq_1 d - \lfloor Mq_1 d \rfloor \leq Mq_1 d - (Mq_2 - 1) \leq 1/N,$
so taking $p = Mq_1$ gives the desired result.
\end{proof}
We are now ready to present a simplified proof of Braun and Pokutta \cite{BraunPokutta}
for the homogeneity property of the elementary closure of bounded spectrahedra, { see also Proposition 1 in~\cite{DadushEtAl}}.

\begin{theorem}[Homogeneity property of elementary closure] \label{Thm:closureHyperplane}
Let $P  = \{\bold{x} \in \mathbb{R}^m \, : \, \, \bold{C} - \sum_{i = 1}^m \bold{A_i}x_i \succeq \mathbf{0} \}$ be a bounded spectrahedron that is contained in a halfspace $\{\bold{x} \in \mathbb{R}^m : \bold{w}^\top \bold{x} \leq d\}$ with $\bold{w} \in \mathbb{Z}^m$ and $d \in \mathbb{R}$. Let $K := \{\bold{x} \in \mathbb{R}^m : \bold{w}^\top \bold{x} = d\}$. Then $\cl(P) \cap K = \cl(P \cap K)$.
\end{theorem}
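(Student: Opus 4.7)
The equality being a double inclusion, I would split the proof accordingly. The inclusion $c(P \cap K) \subseteq c(P) \cap K$ is essentially free: since $P \cap K \subseteq P$, every halfspace containing $P$ also contains $P \cap K$, so every halfspace in the intersection defining $c(P)$ appears also in the intersection defining $c(P \cap K)$, which gives $c(P \cap K) \subseteq c(P)$. Writing the equation $\bold{w}^\top \bold{x} = d$ as two opposite inequalities and absorbing them into a block-diagonal PSD constraint exhibits $P \cap K$ as a bounded spectrahedron, so Proposition~\ref{Prop:compactouter} yields $c(P \cap K) \subseteq P \cap K \subseteq K$, completing this direction.

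For the harder inclusion $c(P) \cap K \subseteq c(P \cap K)$, I would fix an arbitrary cutting plane $\bold{a}^\top \bold{x} \leq \lfloor \beta \rfloor$ appearing in the intersection that defines $c(P \cap K)$, with $\bold{a} \in \mathbb{Z}^m$ and $P \cap K \subseteq \{\bold{a}^\top \bold{x} \leq \beta\}$ (assuming $P \cap K \neq \emptyset$; otherwise the claim is trivial). Applying Theorem~\ref{Thm:dualcut} to the spectrahedral reformulation of $P \cap K$ with block-diagonal data $\Diag(\bold{C}, d, -d)$ and $\Diag(\bold{A_i}, w_i, -w_i)$ yields $\bold{U} \succeq \mathbf{0}$ and $\lambda \in \mathbb{R}$ (the difference of the two scalar dual components) with $\langle \bold{A_i}, \bold{U} \rangle + \lambda w_i = a_i$ for all $i \in [m]$ and $\langle \bold{C}, \bold{U}\rangle + \lambda d \leq \beta$. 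For any integer $k \leq \lambda$, positive semidefiniteness on $P$ combined with $\bold{w}^\top \bold{x} \leq d$ then gives the $P$-valid inequality
\begin{align*}
(\bold{a} - k\bold{w})^\top \bold{x} = \sum_i \langle \bold{A_i}, \bold{U}\rangle x_i + (\lambda - k)\bold{w}^\top \bold{x} \leq \langle \bold{C}, \bold{U}\rangle + (\lambda - k)d \leq \beta - kd.
\end{align*}
Since $\bold{a} - k\bold{w} \in \mathbb{Z}^m$, the CG round $(\bold{a} - k\bold{w})^\top \bold{x} \leq \lfloor \beta - kd \rfloor$ is a cut valid on $c(P)$, and intersecting with $K$ converts it into $\bold{a}^\top \bold{x} \leq \lfloor \beta - kd \rfloor + kd$ on $c(P) \cap K$.

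What remains is to pick an integer $k \leq \lambda$ so that $\lfloor \beta - kd\rfloor + kd \leq \lfloor \beta \rfloor$. A short calculation with fractional parts shows that this holds automatically when $kd \in \mathbb{Z}$ and otherwise reduces to $\{-kd\} + \{\beta\} < 1$. If $\beta \in \mathbb{Z}$ the cut is already satisfied on $P \cap K \supseteq c(P) \cap K$ and transfers trivially; otherwise, setting $k = -p$ with $p \in \mathbb{Z}_+$ turns the requirement into $\{pd\} < 1 - \{\beta\}$. Here I would invoke Corollary~\ref{Cor:onesidedDirichlet} with $N$ chosen so that $1/N < 1 - \{\beta\}$, and, if the returned $p_0$ is not yet at least $-\lambda$, boost it by taking a suitable integer multiple $p := n p_0$, picking $N$ large enough from the outset that $n/N < 1 - \{\beta\}$ and $n\{p_0 d\} < 1$ (which prevents wrap-around and yields $\{pd\} = n \{p_0 d\} \leq n/N$). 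This interleaved choice of Dirichlet tolerance and lower bound on $p$ is the main technical obstacle; the remaining ingredients are a direct application of Theorem~\ref{Thm:dualcut}, the definition of a CG cut, and elementary arithmetic with floor functions, adapting the Schrijver-style polyhedral argument to the spectrahedral setting.
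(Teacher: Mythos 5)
Your proof is correct, and while its overall architecture matches the paper's --- the easy inclusion first, then a dual representation $a_i = \langle \bold{A_i},\bold{U}\rangle + \lambda w_i$, $\langle \bold{C},\bold{U}\rangle + \lambda d \leq \beta$ of the given cut for $P \cap K$, followed by an integer adjustment of the multiplier on $\bold{w}^\top\bold{x}\leq d$ via Corollary~\ref{Cor:onesidedDirichlet} --- the way you obtain that dual representation is genuinely different. The paper perturbs the right-hand side by $\epsilon$, forms an infeasible extended system with two appended scalar diagonal blocks, verifies via Proposition~\ref{Prop:CompactFull} and Theorem~\ref{Thm:dualcut} that no $\bold{x}$ with $\sum_i \bold{\tilde{A}_i}x_i \succneqq \bold{0}$ exists, and only then invokes the theorem of alternatives (Proposition~\ref{Prop:Farkas}) to get $\bold{\hat{U}}\succ\bold{0}$ and strictly positive multipliers $\lambda,\mu$. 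You instead apply Theorem~\ref{Thm:dualcut} directly to $P\cap K$ written as a block-diagonal spectrahedron (encoding $\bold{w}^\top\bold{x}=d$ as two inequalities), reading off $\bold{U}\succeq\bold{0}$ and a sign-unrestricted $\lambda$ from the two scalar dual entries. This reuses machinery already established, avoids the $\epsilon$-perturbation and the Farkas precondition check, and is arguably cleaner; the price is that $\lambda$ has no sign, so you must admit arbitrarily negative integers $k\leq\lambda$, which is exactly why your Dirichlet step needs the explicit boosting of $p_0$ to a multiple $np_0\geq-\lambda$. That boosting is sound: $n$ can be bounded a priori by $\lceil\max(1,-\lambda)\rceil$ since $\lambda$ is fixed before $N$ is chosen, so $N$ can be taken large enough in advance. (The paper's proof faces the same issue --- Corollary~\ref{Cor:onesidedDirichlet} does not by itself guarantee $\lambda+\lambda_0\geq\lambda$ with the stated approximation quality --- but passes over it silently, so your treatment is if anything more careful.) The floor-function reduction $\lfloor\beta-kd\rfloor+kd\leq\lfloor\beta\rfloor \Leftrightarrow \{-kd\}+\{\beta\}<1$, and your handling of the cases $\beta\in\mathbb{Z}$ and $P\cap K=\emptyset$, all check out.
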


\begin{proof}
If $P$ is empty the claim is obvious, hence we assume that $P$ is non-empty.

The inclusion $\cl(P \cap K) \subseteq \cl(P) \cap K$ is trivial. In order to prove the reverse statement, we assume that $H$ is a rational halfspace containing $P \cap K$, i.e., $H = \{\bold{x} \in \mathbb{R}^m \, : \, \, \bold{v}^\top \bold{x} \leq \alpha \}$ where $\bold{v}$ is a vector of relative prime integers. It suffices to show that there exists a halfspace $\hat{H}$ containing $P$ such that $\hat{H}_I \cap K \subseteq H_I$. As {$P \cap K$ is the intersection of all such halfspaces $H$}, we establish $\cl(P) \cap K \subseteq \cl(P \cap K)$.

For each $i \in [m]$ we define the following extended matrix $\bold{\tilde{A}_i} \in \mathcal{S}^{n +2}$:
$
\bold{\tilde{A}_i} := \begin{bsmallmatrix}
\bold{A_i} & \bold{0} & \bold{0} \\
\bold{0}^\top & -w_i & 0 \\
\bold{0}^\top & 0 & -v_i
\end{bsmallmatrix}.
$
We first show that there does not exist an $\bold{x} \in \mathbb{R}^m$ such that $\sum_{i = 1}^m \bold{\tilde{A}_i}x_i \succneqq \bold{0}$. For the sake of contradiction, suppose such a vector exists, i.e., we have $\sum_{i = 1}^m \bold{A_i}\tilde{x}_i \succeq \mathbf{0}$, $\bold{w}^\top\bold{\tilde{x}} \leq 0$ and $\bold{v}^\top\bold{\tilde{x}} \leq 0$ for some $\bold{\tilde{x}}$, but not all of them are satisfied with equality. Since $P$ is non-empty and bounded, it follows from Proposition~\ref{Prop:CompactFull}  that there does not exist an $\bold{x} \in \mathbb{R}^m$ such that $\sum_{i = 1}^m \bold{A_i}x_i \succneqq \bold{0}$. Hence, we must have $\sum_{i = 1}^m \bold{A_i}\tilde{x}_i = \bold{0}$. This implies that either $\bold{w}^\top \bold{\tilde{x}} < 0$ or $\bold{v}^\top \bold{\tilde{x}} < 0$, or both.

Since $P$ is contained in $\{\bold{x} \in \mathbb{R}^m \, : \, \, \bold{w}^\top \bold{x} \leq d\}$, it follows from Theorem~\ref{Thm:dualcut} that there exists $T \succeq \mathbf{0}$ such that $\langle \bold{A_i}, \bold{T} \rangle = w_i$ for all $i \in [m]$. Since $\sum_{i = 1}^m\bold{A_i}\tilde{x}_i = \bold{0}$, we have
$
\left \langle \sum_{i = 1}^m \bold{A_i} \tilde{x}_i , \bold{T} \right \rangle = \sum_{i = 1}^m \tilde{x}_i \langle \bold{A_i}, \bold{T} \rangle = \bold{w}^\top \bold{\tilde{x}} = 0.
$

Since $P \cap K$ is contained in $H = \{\bold{x} \in \mathbb{R}^m \, : \, \, \bold{v}^\top \bold{x} \leq \alpha\}$, we can in a similar fashion show that $v_i = \langle \bold{A_i}, \bold{S} \rangle + \beta w_i$ for some $\bold{S} \succeq \mathbf{0}$ and $\beta \in \mathbb{R}$. From this it follows that $\bold{v}^\top \bold{\tilde{x}} = 0$. We conclude that there exists no $\bold{x} \in \mathbb{R}^m$ such that $\sum_{i = 1}^m \bold{\tilde{A}_i}x_i \succneqq \bold{0}$.

Next, we define the following extended matrix $\bold{\tilde{C}} \in \mathcal{S}^{n+2}$ and parameter $\epsilon > 0$:
\begin{align*}
\bold{\tilde{C}} := \begin{bmatrix}
\bold{C} & \bold{0} & \bold{0} \\
\bold{0}^\top & -d & 0 \\
\bold{0}^\top & 0 & -(\alpha + \epsilon)
\end{bmatrix} \quad \text{and} \quad \epsilon := \begin{cases} \frac{1}{2} \left( \lceil \alpha \rceil - \alpha \right) & \text{if $\alpha$ is not integer,} \\
\frac{1}{2} & \text{otherwise.}
\end{cases}
\end{align*}
Since $P \cap K$ is contained in $H$, it follows that $(P \cap K) \cap \{\bold{x} \in \mathbb{R}^m \, : \, \, \bold{v}^\top \bold{x} \geq \alpha + \epsilon\} = \emptyset$. Equivalently, we know that there does not exist an $\bold{x} \in \mathbb{R}^m$ such that $\bold{\tilde{C}} - \sum_{i = 1}^m \bold{\tilde{A}_i}x_i \succneqq \bold{0}$. We can now apply Proposition~\ref{Prop:Farkas} to this system, from where it follows that the first of the two alternative statements should be satisfied. Hence, there exist $\bold{\hat{U}} \succ \bold{0}$, $\lambda > 0$ and $\mu > 0$ such that $\langle \bold{A_i}, \bold{\hat{U}} \rangle - w_i\lambda - v_i \mu = 0$ for all $i \in [m]$ and $\langle \bold{C}, \bold{\hat{U}} \rangle - d\lambda - (\alpha + \epsilon) \mu \leq 0$. Without loss of generality, we may assume that $\mu = 1$ and we define
$$
\hat{\alpha} := \langle \bold{C}, \bold{\hat{U}} \rangle \quad \text{and} \quad \hat{v}_i := \langle \bold{A_i}, \bold{\hat{U}} \rangle \text{ for all } i \in [m].
$$
It follows from above that this particular $\hat{\alpha}$ and $\bold{\hat{v}}$ satisfy
\begin{align} \label{eq:property1}
\hat{\alpha} \leq \alpha + \epsilon + d\lambda \quad \text{and} \quad \hat{v}_i = v_i + w_i\lambda \text{ for all } i \in [m].
\end{align}
Also, since $\bold{\hat{U}} \succ \bold{0}$, we know that for all $\bold{x} \in P$ we have
\begin{align} \label{eq:property2}
\bold{\hat{v}}^\top \bold{x} = \sum_{i = 1}^m \langle \bold{A_i}, \bold{\hat{U}} \rangle x_i = \left \langle \sum_{i = 1}^m \bold{A_i}x_i, \bold{\hat{U}} \right\rangle \leq \langle \bold{C}, \bold{\hat{U}} \rangle = \hat{\alpha},
\end{align}
where we use the fact that $\langle \bold{C} - \sum_{i = 1}^m\bold{A_i}x_i, \bold{\hat{U}} \rangle \geq 0$. Observe that the tuple $(\lambda, \bold{\hat{v}}, \hat{\alpha})$ can be replaced by $(\lambda + \lambda_0, \bold{\hat{v}} + \lambda_0 \bold{w}, \hat{\alpha} + \lambda_0 d)$ for all $\lambda_0 \geq 0$ without affecting \eqref{eq:property1} and \eqref{eq:property2}, where for the maintenance of \eqref{eq:property2} we use the fact that $P \subseteq \{\bold{x} \in \mathbb{R}^m \, : \, \, \bold{w}^\top \bold{x} \leq d\}$. Now we choose $\lambda_0$ such that $\lambda + \lambda_0 \in \mathbb{Z}_+$ and $d(\lambda + \lambda_0) - \lfloor d(\lambda + \lambda_0) \rfloor < \epsilon$, which can be done by Corollary~\ref{Cor:onesidedDirichlet}. Moreover, we define $d_f := d(\lambda + \lambda_0) - \lfloor d(\lambda + \lambda_0) \rfloor$.

Define $\hat{H} := \{\bold{x} \in \mathbb{R}^m \, : \, \, (\bold{\hat{v}} + \lambda_0\bold{w})^\top \bold{x} \leq \hat{\alpha} + \lambda_0 d\}$. It follows from \eqref{eq:property2} that $P \subseteq \hat{H}$. Moreover, we have
\begin{align*}
\hat{H}_I \cap K & \subseteq \{\bold{x} \in \mathbb{R}^m \, : \, \, (\bold{\hat{v}} + \lambda_0\bold{w})^\top \bold{x} \leq \lfloor \hat{\alpha} + \lambda_0d \rfloor \} \cap \{\bold{x} \in \mathbb{R}^m \, : \, \, \bold{w}^\top \bold{x} = d \} \\
& \subseteq \{\bold{x} \in \mathbb{R}^m \, : \, \, \bold{v}^\top \bold{x} + \bold{w}^\top \bold{x} (\lambda + \lambda_0)  \leq \lfloor \alpha + \epsilon + d(\lambda + \lambda_0) \rfloor, \bold{w}^\top \bold{x} = d \} \\
& \subseteq \{\bold{x} \in \mathbb{R}^m \, : \, \, \bold{v}^\top \bold{x} + d_f \leq  \lfloor \alpha + \epsilon + d_f \rfloor \} \\
& \subseteq \{\bold{x} \in \mathbb{R}^m \, : \, \, \bold{v}^\top \bold{x} \leq \lfloor \alpha \rfloor \} = H_I ,
\end{align*}
where the last inclusion follows from the fact that $d_f \geq 0$ and $\epsilon + d_f < 1$ if $\alpha$ is integer and $\epsilon + d_f < \lceil \alpha \rceil - \alpha$ otherwise.
\end{proof}

The result of Theorem~\ref{Thm:closureHyperplane} holds for any halfspace $\{ \bold{x} \in \mathbb{R}^m \, : \, \, \bold{w}^\top\bold{x} \leq d\}$ with $\bold{w} \in \mathbb{Z}^m$ containing $P$. In particular, it holds for all such halfspaces that support $P$, meaning that $P \cap K \neq \emptyset$, where $K$ is the corresponding hyperplane. In such case, the set $P \cap K$ defines a face of the spectrahedron. It is known that all proper faces of spectrahedra are exposed, meaning that they can be obtained as the intersection of $P$ with a supporting hyperplane. Note, however, that for the faces of bounded spectrahedra these hyperplanes are not necessarily such that the entries in $\bold{w}$ are integral, even if the data matrices describing the spectrahedron are rational (as is the case for polyhedra).

Homogeneity plays a key role in Braun and Pokutta's \cite{BraunPokutta} proof for the polyhedrality of the elementary closure of compact convex sets. For the sake of completeness, we include this result here for the case of bounded spectrahedra.

\begin{theorem} [Dadush et al.,~\cite{DadushEtAl}, Braun and Pokutta~\cite{BraunPokutta}]  \label{Thm:CGpolytope}
The elementary closure $\cl(P)$ of a bounded spectrahedron $P$ is a rational polytope.
\end{theorem}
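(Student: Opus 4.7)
The plan is induction on $\dim(P)$, the dimension of the affine hull of $P$. In the base case $\dim(P) = 0$, the set $P$ is a single point $\bold{p}$, and either $\bold{p} \in \mathbb{Z}^m$ (so that $c(P) = P$) or finitely many CG cuts of the form $\bold{c}^\top\bold{x} \leq \lfloor \bold{c}^\top \bold{p} \rfloor$ with $\bold{c}^\top \bold{p} \notin \mathbb{Z}$ force $c(P) = \emptyset$; in both cases $c(P)$ is a (possibly empty) rational polytope.

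For the inductive step, I would assume the statement for all bounded spectrahedra of dimension strictly less than $d$, and let $P$ have $\dim(P) = d$. For each primitive vector $\bold{w} \in \mathbb{Z}^m$, define $d_\bold{w} := \max_{\bold{x} \in P} \bold{w}^\top \bold{x}$ (attained by compactness), the supporting hyperplane $K_\bold{w} := \{\bold{x} \in \mathbb{R}^m : \bold{w}^\top \bold{x} = d_\bold{w}\}$, and the face $F_\bold{w} := P \cap K_\bold{w}$. Since $F_\bold{w}$ is a bounded spectrahedron of dimension strictly less than $d$, the inductive hypothesis guarantees that $c(F_\bold{w})$ is a rational polytope; Theorem~\ref{Thm:closureHyperplane} then yields $c(P) \cap K_\bold{w} = c(F_\bold{w})$, so the restriction of $c(P)$ to each integer supporting hyperplane is polyhedral.

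To conclude, I would need to show that only finitely many primitive integer directions $\bold{w}$ contribute irredundant CG cuts to $c(P)$. Following the scheme of Braun and Pokutta~\cite{BraunPokutta}, one covers the unit sphere $S^{m-1}$ by finitely many open sets $U_1, \ldots, U_N$, each centered at a representative rational direction $\bold{w}_i / \|\bold{w}_i\|$ (possible by compactness of $S^{m-1}$), and chooses these neighbourhoods small enough that every CG cut with direction in $U_i$ is implied, on $P$, by the CG cut at $\bold{w}_i$ together with the finitely many facet inequalities of the rational polytope $c(F_{\bold{w}_i})$. Given such a finite refinement, $c(P)$ is expressed as the intersection of $P$ with finitely many CG half-spaces having primitive integer normals, and is therefore a rational polytope.

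The main obstacle is the construction of the finite covering in the last step. It requires blending three ingredients: the polyhedrality of each $c(F_{\bold{w}_i})$ supplied by the inductive hypothesis, the homogeneity identity of Theorem~\ref{Thm:closureHyperplane} which locks the boundary behaviour of $c(P)$ near $K_{\bold{w}_i}$ to that of the lower-dimensional polytope $c(F_{\bold{w}_i})$, and a continuity/perturbation argument controlling how $\bold{w} \mapsto \lfloor d_\bold{w} \rfloor$ jumps as $\bold{w}$ varies over integer directions near $\bold{w}_i$. The delicate point is ruling out an infinite accumulation of irredundant cuts coming from integer directions approaching the boundary of a single $U_i$; this is precisely where the facet structure of $c(F_{\bold{w}_i})$ must be used to absorb all such nearby cuts.
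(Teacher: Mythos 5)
The paper does not actually supply a proof of Theorem~\ref{Thm:CGpolytope}: it states the result with attribution to Dadush et al.\ and Braun--Pokutta and only proves the homogeneity property (Theorem~\ref{Thm:closureHyperplane}) on which their argument rests. Your proposal reconstructs exactly that Braun--Pokutta route --- induction on $\dim(P)$, homogeneity to identify $c(P)\cap K_{\mathbf{w}}$ with the lower-dimensional closure $c(P\cap K_{\mathbf{w}})$, and a compactness covering of the sphere of directions --- so the strategy is the intended one.

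As written, however, the proposal has a genuine gap, which you flag yourself: the final covering step is described but not proved, and it is the entire technical content of the theorem. Everything preceding it (the base case, the identification of $c(P)\cap K_{\mathbf{w}}$ with a rational polytope via induction plus Theorem~\ref{Thm:closureHyperplane}) is routine. What remains to be shown is that the neighbourhoods $U_i$ can be chosen so that every CG cut whose primitive integer normal $\mathbf{w}$ satisfies $\mathbf{w}/\|\mathbf{w}\|\in U_i$ is implied by finitely many fixed inequalities. This requires two nontrivial ingredients you do not supply: first, the facet inequalities of $c(F_{\mathbf{w}_i})$ are a priori only valid on the hyperplane $K_{\mathbf{w}_i}$ and must be lifted to CG cuts valid for all of $c(P)$ (this is where homogeneity is really consumed, and the lifting is not automatic); second, a uniform quantitative estimate showing that the halfspace $\{\mathbf{x}:\mathbf{w}^\top\mathbf{x}\le\lfloor d_{\mathbf{w}}\rfloor\}$ contains the polytope cut out by those lifted inequalities for \emph{all} of the infinitely many primitive integer $\mathbf{w}$ (of unbounded norm) whose normalized directions lie in $U_i$. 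Without that estimate the accumulation of irredundant cuts is not excluded and the conclusion does not follow. A secondary, fixable omission: the inductive step needs $F_{\mathbf{w}}$ to be a proper face so that its dimension drops; when $\mathbf{w}^\top\mathbf{x}$ is constant on $P$ this fails, and such directions must be handled separately (either $c(P)=\emptyset$ when the constant is non-integral, or the cut is an equation $P$ already satisfies).
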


From Theorem~\ref{Thm:CGpolytope} and the fact that the elementary closure of a rational polytope is again a rational polytope \cite{Schrijver}, it follows that the finite sequence
\begin{align*}
P = P^{(0)} \supseteq P^{(1)} \supseteq \ldots \supseteq P^{(k)} \supseteq P^{(k+1)} \supseteq \ldots \supseteq P_I,
\end{align*}
consists of rational polyhedra from the first closure onwards. Observe that the boundedness assumption cannot be relaxed. Indeed, if $P$ is unbounded, it is not even clear whether $P_I$ is a polyhedron, {as the following example suggests}.

\begin{example}
    
     Consider the spectrahedron $Q$ in Example~\ref{Example:spectrahedra}. The integer hull $Q_I$ is the convex hull of the integer points in the epigraph of $f(x_1) = \frac{1}{2}x_1^2$. This convex hull is not polyhedral. To verify this, observe that the recession cone of $Q_I$ is contained in the recession cone of $Q$, which is $ \textup{rec}(Q) := \{\bold{x} \in \mathbb{R}^2 \, : \, \, x_2 \geq 0,~x_1 = 0\}$. Since $Q_I$ is unbounded and $\textup{rec}(Q)$ has only one ray, the recession cone of $Q_I$ must also be $\textup{rec}(Q)$. If $Q_I$ would be polyhedral, this implies that the halfspace $x_1 \leq N$ supports $Q_I$ for some finite value of $N$. However, this cannot be true as $Q_I$ contains integral points $(x_1, x_2) \in \mathbb{Z}^2$ for arbitrarily large $x_1$. 

    One can verify that $\cl(Q) = Q_I$. Namely, each facet of $Q_I$ is induced by a line between the points $(2k, 2k^2), (2(k-1),2(k-1)^2) \in \mathbb{Z}^2$ for any $k \in \mathbb{Z}$. Let such line for a fixed $k$ be described by $x_2 = cx_1 + d$ with $c,d  \in \mathbb{Z}$. Then, the parallel line $x_2 = cx_1 + d - 1$ lies strictly below $Q$. This implies that the halfspace $x_2 \geq cx_1 + d - 1 + \epsilon$ for any $\epsilon > 0$ contains $Q$ and that its integer hull is $x_2 \geq cx_1 + d$. Therefore, all facet-defining inequalities of $Q_I$ have Chv\'atal rank one and $\cl(Q) = Q_I$. This shows that $\cl(Q)$ is not a polyhedron. 
\end{example}

\subsection{ The elementary closure of spectrahedra and total dual integrality} \label{Section:TDI}

In this section we derive a class of spectrahedra for which we can find an explicit expression for the elementary closure. For rational polyhedra such an expression can be derived from a total dual integral representation of the linear system \cite{Schrijver}. It is therefore not surprising that a similar construction can be applied for bounded spectrahedra, albeit with a bit more technicalities. {After connecting total dual integrality for SDPs to the elementary closure, we derive a characterization and several sufficient conditions for a linear matrix inequality to be totally dual integral.} 

Recently, De Carli Silva and Tun\c{c}el \cite{DeCarliSilvaTuncel} introduced a notion of total dual integrality for SDPs. The authors of \cite{DeCarliSilvaTuncel} argue that the term integrality in SDPs should be defined with care. For instance, the rank-one property that is sometimes used in the literature as the notion of SDP integrality is proven to be primal-dual asymmetric and therefore not the favoured choice. Instead, {the authors of~\cite{DeCarliSilvaTuncel} propose a notion of SDP integrality that is based on a set of integer generating matrices.}
\newtagform{nobrackets}{}{}
\usetagform{nobrackets}

\begin{definition}[Property $(\Pz \mathbb{Z})_{\mathcal{V}}$] {Let $\mathcal{V} := \{\bold{V_1}, \ldots, \bold{V_k}\} \subseteq \mathcal{S}^n_+$ be a finite  set of integer PSD matrices.} 
A matrix $\bold{X} \in \mathcal{S}_n^+$ satisfies integrality property { $(\Pz\mathbb{Z})_{\mathcal{V}}$}  if
\begin{align}  \tag{$(\Pz \mathbb{Z})_{\mathcal{V}}$} \label{PZ}
\bold{X} = \sum_{j \in [k]} y_j\bold{V_j} \quad \text{for some $\bold{y} \in \mathbb{Z}^k_+$.}
\end{align}
\end{definition}
The authors of \cite{DeCarliSilvaTuncel} restricted to the set $\mathcal{V} = \{ \mathbbm{1}_S\mathbbm{1}_S^\top \, : \, \, S \subseteq [n]\}$, which could be seen as a natural embedding for the combinatorial problems that are considered in \cite{DeCarliSilvaTuncel}. One could argue, however, that this embedding is rather arbitrary. For that reason, we consider a general set of generating matrices. Note that the matrices $\bold{X}$ that satisfy property  \ref{PZ} are also integral in the sense that $\bold{X} \in \mathbb{Z}^{n \times n}$. To overcome confusion between these definitions, we will always explicitly refer to property  \ref{PZ} if that notion is meant.

Now we present the definition of total dual integrality for SDPs,
see also \cite{DeCarliSilvaTuncel}.

\newtagform{brackets}{(}{)}
\usetagform{brackets}

\begin{definition}[Total dual integrality] 
Let $Z \subseteq \mathbb{Z}^m$.
A linear matrix inequality  $\bold{C} - \sum_{i = 1}^m \bold{A_i}x_i \succeq \bold{0}$ is called totally dual integral (TDI) {on $Z$ if there exists some finite set of integer PSD matrices $\mathcal{V}$ such that}, for every $\bold{b} \in Z$, the SDP dual to $\sup\left \{ \bold{b}^\top \bold{x} \, : \, \, \bold{C} - \sum_{i = 1}^m \bold{A_i}x_i \succeq 0 \right\}$ has an optimal solution satisfying property \ref{PZ} whenever it has an optimal solution.
\end{definition}

{
A main difference with the original definition of total dual integrality for polyhedra, see e.g.~\cite{EdmondsGiles}, is that we restrict the objective vectors for which dual integrality should hold to a subset $Z$ of $\mathbb{Z}^m$. As explained in \cite{DeCarliSilvaTuncel}, this follows from the fact that semidefinite programs often follow from lifted formulations. For instance, $Z$ could be the range from a linear lifting map, e.g., $Z = \{ 0 \oplus \bold{b}' \, : \, \, \bold{b}' \in \mathbb{Z}^{m-1}\}.$

Based on this restriction to vectors in $Z$, it makes sense to consider a relaxed version of the CG closure in which we take the intersection of halfspaces induced by coefficient vectors in $Z$. More precisely, we define the CG closure with respect to $Z$ as
\begin{align} \label{eq:elementaryclosure_relax}
\cl (P, Z) := \bigcap_{\substack{(\bold{c}, d) \in Z \times \mathbb{R} \\
P \subseteq \{\bold{x} \, : \, \, \bold{c}^\top \bold{x} \leq d \}}} \left\{ \bold{x} \in \mathbb{R}^m \, : \, \, \bold{c}^\top \bold{x} \leq \lfloor d \rfloor \right\}.
\end{align}
 This relaxation of the CG closure is also considered in the literature, see e.g., \cite{DadushEtAl2011, DadushEtAl}.   The standard CG closure $\cl(P)$ that we considered so far equals $\cl(P, \mathbb{Z}^m)$.

The following theorem shows that if a spectrahedron is defined by an LMI that is TDI on $Z$, its (relaxed) CG closure $\cl(P,Z)$ can be explicitly defined.
}
\begin{theorem} \label{Thm:TDI}
Let $P = \left\{ \bold{x} \in \mathbb{R}^m \, : \, \, \bold{C} - \sum_{i = 1}^m \bold{A_i}x_i \succeq \bold{0} \right\}$
be such that the LMI $\bold{C} - \sum_{i = 1}^m \bold{A_i}x_i \succeq \bold{0}$ is TDI {on $Z$} and satisfies Slater's condition. {Let $\mathcal{V} = \{\bold{V_1}, \ldots, \bold{V_k}\}$ denote the corresponding generating set of integer PSD matrices and suppose $\begin{bmatrix}
    \langle \bold{V_j}, \bold{A_1} \rangle & \cdots & \langle \bold{V_j}, \bold{A_m} \rangle
\end{bmatrix}^\top \in Z$ for all $j \in [k]$. } Define $\bold{B} \in \mathbb{Z}^{k \times m}$ and $\bold{d} \in \mathbb{Z}^{k}$ such that: 
$
B_{j,i} := \left\langle \bold{A_i}, \bold{V_j} \right \rangle  \text{ and } d_j := \left \lfloor \left\langle \bold{C}, \bold{V_j} \right \rangle \right \rfloor,
$
for all {$j \in [k]$ and $i \in [m]$}. Then,
$\cl(P, Z) = Q := \left \{ \bold{x} \in \mathbb{R}^m \, : \, \, \bold{B}\bold{x} \leq \bold{d}\right \}.$
\end{theorem}
\begin{proof}
To prove that $\cl(P, Z) \subseteq Q$, {observe that $\bold{V_j} \succeq \zb$ with $\begin{bmatrix}
    \langle \bold{V_j}, \bold{A_1} \rangle & \cdots & \langle \bold{V_j}, \bold{A_m} \rangle
\end{bmatrix}^\top \in Z$ for all $j \in [k]$}. Consequently, we know that $P \subseteq \left\{ \bold{x} \in \mathbb{R}^m \, : \, \, \sum_{i = 1}^m x_i \langle \bold{A_i}, {\bold{V_j} }\rangle \leq \langle \bold{C}, {\bold{V_j} } \rangle \right \}$. It follows from \eqref{eq:elementaryclosure_relax} that $\cl(P,Z) \subseteq \{ \bold{x} \in \mathbb{R}^m \, : \, \, \sum_{i = 1}^m x_i \langle \bold{A_i}, {\bold{V_j} } \rangle \leq \left \lfloor \langle \bold{C}, {\bold{V_j} } \rangle \right \rfloor \}$. Since all inequalities in $\bold{Bx} \leq \bold{d}$ are of this form, it follows that $\cl(P, Z) \subseteq Q$.

To prove the reverse direction, let $H := \left\{\bold{x} \in \mathbb{R}^m \, : \, \, { \bold{b}}^\top \bold{x} \leq q\right\}$ be a halfspace containing $P$ {with ${\bold{b}} \in Z$}. Since $P \subseteq H$, we have
\begin{align}
q & \geq \sup_{\bold{x}} \left\{ {\bold{b}}^\top \bold{x} \, : \, \, \bold{C} - \sum_{i = 1}^m \bold{A_i}x_i \succeq \bold{0} \right\} \label{eq:TDIdual}\\
& = \inf_{\bold{X}} \left\{ \langle \bold{C}, \bold{X} \rangle \, : \, \, \langle \bold{A_i}, \bold{X} \rangle = {b_i}, \, i \in [m],  \, \bold{X} \succeq \bold{0} \right\}, \label{eq:TDIprimal}
\end{align}
where strong duality among \eqref{eq:TDIdual} and \eqref{eq:TDIprimal} holds since the former problem has a Slater feasible point. By the same argument, we know that the infimum in \eqref{eq:TDIprimal} is attained. Since $\bold{C} - \sum_{i = 1}^m \bold{A_i}x_i \succeq \bold{0}$ is TDI {on $Z$}, it follows that there exists an optimal solution $\bold{\hat{X}}$ to \eqref{eq:TDIprimal} satisfying property~\ref{PZ}. In other words, there exists an $\bold{\hat{y}} \in \mathbb{Z}_+^{k}$ such that
\begin{align*}
{\bold{\hat{X}} = \sum_{j \in [k]}\hat{y}_j \bold{V_j}}, \quad \langle \bold{A_i}, \bold{\hat{X}} \rangle = {b_i} \text{ for all } i \in [m], \quad \bold{\hat{X}} \succeq 0.
\end{align*}
Consequently, we have
$\lfloor q \rfloor  \geq \lfloor \langle \bold{C}, \bold{\hat{X}} \rangle \rfloor  = {\left \lfloor \sum_{j \in [k]} \hat{y}_j \left \langle \bold{C}, \bold{V_j} \right \rangle \right \rfloor \geq  \sum_{j \in [k]} \hat{y}_j \left \lfloor\left \langle \bold{C}, \bold{V_j} \right \rangle \right \rfloor } = \bold{d}^\top \bold{\hat{y}}.$
Now, consider the following linear optimization problem and its corresponding dual:
\begin{align*}
\max \{ {\bold{b}}^\top \bold{x} \, : \, \, \bold{B}\bold{x} \leq \bold{d} \} = \min\{ \bold{d}^\top \bold{y} \, : \, \, \bold{y} \geq \bold{0}, \bold{y}^\top \bold{B} = {\bold{b}}^\top \}.
\end{align*}
Since $\bold{\hat{y}} \geq \bold{0}$ and $(\bold{\hat{y}}^\top \bold{B} )_i = {\sum_{j \in [k]} \hat{y}_j \langle \bold{A_i}, \bold{V_j} \rangle } = \langle \bold{A_i}, \bold{\hat{X}} \rangle = { b_i}$, the solution $\bold{\hat{y}}$ is feasible for the minimization problem above. This yields
$\max \{ {\bold{b}}^\top \bold{x} \, : \, \, \bold{B}\bold{x} \leq \bold{d} \} \leq \bold{d}^\top\bold{\hat{y}} \leq \lfloor q \rfloor.$
Hence, $Q \subseteq \left\{ \bold{x} \in \mathbb{R}^m \, : \, \, {\bold{b}}^\top \bold{x} \leq \lfloor q \rfloor \right\}$. Since this holds for all {halfspaces $H$ induced by coefficient vectors in $Z$}, it follows that $Q \subseteq \cl(P, Z)$.
\end{proof}
For the special case where $Z = \mathbb{Z}^m$, Theorem~\ref{Thm:TDI} provides a closed-form expression for $\cl(P)$. Observe that for that special case the condition that $\begin{bmatrix}
    \langle \bold{V_j}, \bold{A_1} \rangle & \cdots & \langle \bold{V_j}, \bold{A_m} \rangle
\end{bmatrix}^\top \in Z$ for all $j \in [k]$ can be simplified to $\langle \bold{A_i}, \bold{V_j} \rangle \in \mathbb{Z}$ for all $i \in [m]$ and $j \in [k]$.

Besides providing a closed-form expression for $\cl(P)$, Theorem~\ref{Thm:TDI} can be used to identify bounded spectrahedra for which $P = P_I$. Namely, if the matrix $\bold{C}$ is such that $\langle \bold{C}, \bold{V_j} \rangle \in \mathbb{Z}$ for all $j \in [k]$, then $P \subseteq Q$. {For spectrahedra that are bounded,} this implies that the chain $Q = \cl(P) \subseteq P \subseteq Q$ holds with equality, hence $\cl(P) = P$. As $P^{(k)} = P_I$ for some finite $k$ for all bounded spectrahedra, we must have $P = P_I$. De Carli Silva and Tun\c{c}el \cite{DeCarliSilvaTuncel} show that this, for example, happens for the SDP formulation of the Lov\'asz theta function when the underlying graph is perfect.

\medskip 

{A natural question is under which conditions a linear matrix inequality is TDI on a certain set $Z$. Below we first derive a full characterization of LMIs that are totally dual integral on the full set $\mathbb{Z}^m$. The characterization relates to the faces of the spectrahedron induced by the LMI. It is well-known that the faces of $\mathcal{S}^n_+$ are associated with linear subspaces of $\mathbb{R}^n$, see e.g., \cite{BarkerCarlson}. In the same vein, the facial structure of a spectrahedron can be characterized as follows.

\begin{lemma}[Ramana and Goldman~\cite{RamanaGoldman}]
    Let $P = \{\xb \in \mathbb{R}^m \, : \, \, \Cb - \sum_{i = 1}^m \Ab x_i \succeq \zb \}$ be a spectrahedron and let $F \subseteq P$ be a nonempty face of $P$. Then, there exists a subspace $\mathcal{R}_F \subseteq \mathbb{R}^n$ such that 
    \begin{align*}
        F = \left\{\xb \in P \, : \, \, \mathcal{R}_F \subseteq \Nul \left( \Cb - \sum_{i = 1}^m \Ab x_i \right) \right\},
    \end{align*}
where any point $\xb$ in the relative interior of $F$ satisfies~$\Nul \big( \Cb - \sum_{i = 1}^m \Ab x_i \big) = \mathcal{R}_F$.  \label{Lem:RamanaGoldman}
\end{lemma} Lemma~\ref{Lem:RamanaGoldman} implies that in the particular case where the face $F$ of $P$ is an extreme point~$\bold{\bar{x}}$, we have $\mathcal{R}_{\bold{\bar{x}}} = \Nul ( \Cb - \sum_{i=1}^m \Ab \bar{x}_i)$.  
For any nonempty face $F$ of $P$, we define the cone of objective vectors $\bold{b}$ for which the elements in $F$ maximize $\bold{b}^\top \xb$ over $P$, i.e.,
\begin{align} \label{Def:KF}
    K(F) := \left\{ \bold{b} \in \mathbb{R}^m \, : \, \, \bold{b}^\top \bold{y} = \max \{\bold{b}^\top \xb \, : \, \, \xb \in P \} \text{ for all } \bold{y} \in F  \right\}. 
\end{align}
For any proper face $F \subseteq P$, the cone $K(F)$ is nonempty and equals the intersection over all normal cones of $P$ at the points in $F$. 

Next, we recall the definition of a so-called Hilbert basis. 
\begin{definition}
    A set $\{\bold{v}_1, \ldots, \bold{v}_k\} \subseteq \mathbb{Z}^m$ is a Hilbert basis if every integral vector~$\xb \in \cone (\{\bold{v}_1, \ldots, \bold{v}_k\})$ can be written as $\xb = \sum_{j = 1}^k \alpha_j \bold{v}_j$, $\alpha_j \geq 0$, $\alpha_j \in \mathbb{Z}$, for all $j \in [k]$. 
\end{definition}
By abuse of terminology, we will refer to an LMI whose solution set is bounded as a bounded LMI. The following theorem provides a full characterization of bounded LMIs that are TDI on the full set of integer vectors.  

\begin{theorem} \label{Thm:Hilbert} Let the linear matrix inequality  $\Cb - \sum_{i = 1}^m \Ab x_i \succeq \bold{0}$ be bounded and assume Slater's condition holds. Then, $\bold{C} - \sum_{i = 1}^m \bold{A_i}x_i \succeq \bold{0}$ is totally dual integral on $\mathbb{Z}^m$ if and only if there exists some finite set of integer PSD matrices $\mathcal{V} = \{\bold{V_1}, \ldots, \bold{V_k}\}$ such that for each extreme point $\bold{\bar{x}}$ of the induced spectrahedron $P = \{\bold{x} \in \mathbb{R}^m \, : \, \, \bold{C} - \sum_{i = 1}^m \bold{A_i}x_i \succeq \bold{0}\}$ with $K(\bold{\bar{x}}) \cap \mathbb{Z}^m \neq \emptyset$, the vectors 
    \begin{align*}
    \bold{g_j} := 
        \begin{bmatrix}
            \langle \bold{A_1}, \bold{V_j} \rangle & \dots & \langle \bold{A_m}, \bold{V_j} \rangle
            \end{bmatrix}^\top \quad \text{for } j \in J:= \{ j \in [k] \, : \, \, \Coll(\bold{V_j}) \subseteq \mathcal{R}_{\bold{\bar{x}}} \} 
    \end{align*}
    form a Hilbert basis of $K(\bold{\bar{x}})$. 
\end{theorem}
\begin{proof}
    Let $\bold{b} \in \mathbb{Z}^m$. Since $P$ is bounded, the maximum of $\bold{b}^\top \xb$ over $\xb \in P$ is attained at a face of $P$. Thus, there exists an extreme point $\bold{\bar{x}}$ of $P$ with $\bold{b} \in K(\bold{\bar{x}})$. As $P$ contains a Slater feasible point, we have
    \begin{align} \label{ProofEq:Hilbertduality}
        \max_\xb \left\{ \bold{b}^\top \xb  \, : \, \, \xb \in P \right\} = \min_{\Xb} \left\{ \langle \Cb, \Xb \rangle \, : \, \, \langle \Ab, \Xb \rangle = b_i,~i \in [m],~\Xb \succeq \zb \right\}.
    \end{align}
The point $\bold{\bar{x}}$ is optimal for the maximization problem above. 
    Complementary slackness then implies that any $\Xb$ optimal to the dual problem should satisfy~${(\Cb - \sum_{i = 1}^m \Ab \bar{x}_i) \Xb = \zb}$, or equivalently, $\Coll(\Xb) \subseteq \Nul(\Cb - \sum_{i=1}^m \Ab \bar{x}_i) = \mathcal{R}_{\bold{\bar{x}}}$.
    To show that   $\bold{g_j}$ is contained in $K(\bold{\bar{x}})$ for~$j \in J$, we first observe that $\bold{V_j}$ is feasible for the minimization problem $$\min_{\Xb} \left\{ \langle \Cb, \Xb \rangle \, : \, \, \langle \Ab, \Xb \rangle = (\bold{g_j})_i,~i \in [m],~\Xb \succeq \zb \right\}.$$
    Then, since  $\Coll(\bold{V_j}) \subseteq \mathcal{R}_{\bold{\bar{x}}}$, we know that $(\Cb - \sum_{i = 1}^m \Ab \bar{x}_i) \bold{V_j} = \zb$. Therefore, 
    $\bold{\bar{x}}$ and $\bold{V_j}$ are optimal solutions 
    to  $\max_\xb \left\{ \bold{g_j}^\top \xb  \, : \, \, \xb \in P \right\}$ and $\min_{\Xb} \{ \langle \Cb, \Xb \rangle \, : \, \, \langle \Ab, \Xb \rangle = (\bold{g_j})_i,~i \in [m],~\Xb \succeq \zb \}$, respectively. This    
    implies that $\bold{g_j}$ is indeed contained in~$K(\bold{\bar{x}})$ for $j \in J$.

    Now, suppose that the vectors $\bold{g_j}$, $j \in J$ form a Hilbert basis of $K(\bold{\bar{x}})$. Then, we have~${\bold{b} = \sum_{j \in J}\alpha_j \bold{g_j}}$ for some $\alpha_j \geq 0$, $\alpha_j \in \mathbb{Z}$, $j \in J$. Consequently,~${\Xb := \sum_{j \in J} \alpha_j \bold{V}_j}$ is feasible for the minimization problem in~\eqref{ProofEq:Hilbertduality} with $\Coll(\Xb) \subseteq \mathcal{R}_{\bold{\bar{x}}}$. Since this establishes complementary slackness between $\Xb$ and $\bold{\bar{x}}$, it follows that $\Xb$ is a dual optimal solution that satisfies property~\ref{PZ}. 

    Conversely, if the LMI is totally dual integral on $\mathbb{Z}^m$, it follows that the dual problem in~\eqref{ProofEq:Hilbertduality} has an optimal solution $\Xb$ satisfying property~\ref{PZ}. Therefore, $\Xb = \sum_{j = 1}^k \alpha_j \bold{V_j}$ for some $\alpha_j \geq 0$, $\alpha_j \in \mathbb{Z}$, $j \in [k]$. Now, let $J^C := [k] \setminus J$. Then, 
    \begin{align*}
        \Xb = \sum_{j \in J} \alpha_j \bold{V_j} + \sum_{j \in J^C} \alpha_j \bold{V_j}. 
    \end{align*}
    By complementary slackness, we have $\Coll(\Xb) \subseteq \mathcal{R}_{\bold{\bar{x}}}$, implying that $\Coll( \sum_{j \in J^C} \alpha_j \bold{V_j}) = \Coll(\Xb - \sum_{j \in J}\alpha_j \bold{V_j}) \subseteq \mathcal{R}_{\bold{\bar{x}}}$. Since the $\bold{V_j}$'s are positive semidefinite, we also know that $\Coll(\alpha_j \bold{V_j}) \subseteq \Coll( \sum_{j \in J^C} \alpha_j \bold{V_j}) \subseteq \mathcal{R}_{\bold{\bar{x}}}$ for all $j \in J^C$. However, by the definition of $J^C$ we have $\Coll(\bold{V_j}) \nsubseteq \mathcal{R}_{\bold{\bar{x}}}$, so we must have $\alpha_j = 0$ for all $j \in J^C$. We conclude that $\Xb$ is a nonnegative integer combination of the matrices $\bold{V_j}$ with $j \in J$. By the constraints of the minimization problem in~\eqref{ProofEq:Hilbertduality}, it finally follows that $\bold{b} = \sum_{j \in J} \alpha_j \bold{g_j}$. As the construction can be repeated for all $\bold{b} \in \mathbb{Z}^m$ in $K(\bold{\bar{x}})$, we conclude that $\{ \bold{g_j} \, : \, \, j \in J\}$ indeed forms a Hilbert basis of $K(\bold{\bar{x}})$. The same holds for all other extreme points $\bold{\bar{x}}$ for which $K(\bold{\bar{x}}) \cap \mathbb{Z}^m \neq \emptyset$.  
\end{proof}
Theorem~\ref{Thm:Hilbert} has a significant implication on the structure of the induced spectrahedron of a bounded LMI that is TDI on $\mathbb{Z}^m$. 
\begin{corollary} \label{Cor:TDIpolyhedral}
    If a bounded LMI $\bold{C} - \sum_{i = 1}^m \bold{A_i}x_i \succeq \bold{0}$ that satisfies Slater's condition is totally dual integral on $\mathbb{Z}^m$, the spectrahedron ${P = \{ \xb \in \mathbb{R}^m \, : \, \, \bold{C} - \sum_{i = 1}^m \bold{A_i}x_i \succeq \bold{0}\}}$ is polyhedral. 
\end{corollary}

\begin{proof}
    Let $h_P: \mathbb{R}^m \rightarrow \mathbb{R}$ denote the support function of $P$, i.e., $h_P(\bold{x}) := \sup_{\bold{a} \in P} \{\bold{x}^\top \bold{a}\}$ and let $(\bold{c},d) \in \mathbb{Z}^m \times \mathbb{R}$ be such that $P \subseteq \{\bold{x} \in \mathbb{R}^m \, : \, \, \bold{c}^\top \bold{x} \leq d\}$. Then, there exists an extreme point $\bold{\bar{x}}$ of $P$ such that $\bold{c} \in K(\bold{\bar{x}})$.  By Theorem~\ref{Thm:Hilbert}, it follows that there exists a subset $J \subseteq [k]$ and $\alpha_j \geq 0$, $\alpha_j \in \mathbb{Z}$, $j \in J$ such that $\bold{c} = \sum_{j \in J} \alpha_j \bold{g_j}$. Obviously, $h_P(\bold{c}) = \bold{c}^\top \bold{\bar{x}}$ and, since $\bold{g_j} \in K(\bold{\bar{x}})$, $h_P(\bold{g_j}) = \bold{g_j}^\top \bold{\bar{x}}$ for all $j \in J$. Now, the conical combination of the inequalities $\bold{g_j}^\top \bold{x} \leq h_P(\bold{g_j})$, each with weight $\alpha_j$, results in $\sum_{j \in J} \alpha_j \bold{g_j}^\top \bold{x} \leq \sum_{j \in J} \alpha_j h_P(\bold{g_j}) = \sum_{j \in J}\alpha_j\bold{g_j}^\top \bold{\bar{x}} = \bold{c}^\top \bold{\bar{x}} \leq d$. Since the left-hand side equals $\bold{c}^\top \bold{x}$, the halfspace $\{\bold{x} \in \mathbb{R}^m \, : \, \, \bold{c}^\top \bold{x} \leq d\}$ is implied by the inequalities $\bold{g_j}^\top \bold{x} \leq h_P(\bold{g_j})$, $j \in [k]$. 

Since this construction can be repeated for all halfspaces of the form $\{\bold{x} \in \mathbb{R}^m \, : \, \, \bold{c}^\top \bold{x} \leq d\}$ where $(\bold{c},d) \in \mathbb{Z}^m \times \mathbb{R}$, and $P$ equals the intersection of all such halfspaces, see Proposition~\ref{Prop:compactouter}, it follows that $P$ is contained in the polyhedron induced by $\bold{g_j}^\top \bold{x} \leq h_P(\bold{g_j})$, $j \in [k]$. Since the converse inclusion is also true, $P$ is polyhedral.
\end{proof}

Corollary~\ref{Cor:TDIpolyhedral} implies that the only bounded linear matrix inequalities that may be TDI on $\mathbb{Z}^m$ can be described by a finite number of linear inequalities. This is the case, for instance, when the matrices $\Cb$ and $\Ab$, $i \in [m]$, are diagonal or simultaneously diagonalizable. In general, it is $\mathcal{NP}$-hard to decide whether a spectrahedron is polyhedral, see Ramana~\cite{Ramana}. The following result provides a characterization of polyhedral spectrahedra that are full-dimensional. Observe that any spectrahedron can be transformed to a full-dimensional spectrahedron by a restriction to its affine hull.  
\begin{theorem}[Ramana~\cite{Ramana}] \label{Thm:RamanaSpectrahedron}
    Let $P = \{ \xb \in \mathbb{R}^m \, : \, \, \Cb - \sum_{i = 1}^m \Ab x_i \succeq \zb \}$ be a full-dimensional spectrahedron. Then, $P$ is polyhedral if and only if there exists a non-singular matrix $M \in \mathbb{R}^{n \times n}$ and $\bold{d}, \bold{a_i} \in \mathbb{R}^{\ell}$,  $\Cb', \Ab' \in \mathcal{S}^{n - \ell}$, $i \in [m]$, with $\ell \leq n$
    such that for all $\xb \in \mathbb{R}^m$ we have
    \begin{align} \label{Eq:SpecIsPoly}\small
        M\left(\Cb - \sum_{i = 1}^m \Ab x_i \right)M^\top = \begin{bmatrix}
            \Cb' - \sum_{i = 1}^m \Ab' x_i & \zb \\
            \zb & \Diag(\bold{d}) - \sum_{i = 1}^m \Diag(\bold{a_i})x_i
        \end{bmatrix}
    \end{align}
     with $P = \{\xb \in \mathbb{R}^m \, : \, \, \Diag(\bold{d}) - \sum_{i = 1}^m \Diag(\bold{a_i})x_i \succeq \zb\}$. 
\end{theorem}

It is well-known that any rational polyhedron $P$ can be described by a totally dual integral system of linear inequalities, see Giles and Pulleyblank~\cite{GilesPulleyblank}. Hence, if a spectrahedron $P$ satisfies Theorem~\ref{Thm:RamanaSpectrahedron} with rational $\bold{d}, \bold{a_i}$ for all $i \in [m]$, then $P$ is totally dual integral on $\mathbb{Z}^m$ with respect to generating matrices $\mathcal{V} = \{\Diag(\bold{e_1}), \ldots, \Diag(\bold{e_n})\} \subseteq \mathcal{S}^{\ell}_+$. 

By relaxing the notion of total dual integrality to a strict subset $Z$ of $\mathbb{Z}^m$, it might be possible to identify other conditions of TDIness that go beyond polyhedrality. In return, the best one can hope for is a description of $\cl(P,Z)$, see Theorem~\ref{Thm:TDI}.  

As shown by Bhardwaj et al.~\cite{BhardwajEtAl}, any full-dimensional spectrahedron $P$ can be expressed by a linear matrix inequality in the form of~\eqref{Eq:SpecIsPoly}, even if $P$ is non-polyhedral. When the residual linear matrix form $\Cb' - \sum_{i = 1}^m \Ab' x_i$ cannot be further diagonalized, the form on the right-hand side of~\eqref{Eq:SpecIsPoly} is called the normal form of the linear matrix inequality. Intuitively speaking, the bottom right block of~\eqref{Eq:SpecIsPoly} can be viewed as the polyhedral part of the spectrahedron. As an extension of the result by Giles and Pulleyblank~\cite{GilesPulleyblank}, the following result shows that the polyhedral part of a spectrahedron can, under mild conditions, be made totally dual integral on an appropriate set $Z$. 

\begin{theorem}\label{Thm:polyhedralonZ} Let $P = \{ \xb \in \mathbb{R}^m \, : \, \, \Cb - \sum_{i = 1}^m \Ab x_i \succeq \zb \}$ be a full-dimensional spectrahedron that can be written in the normal form~\eqref{Eq:SpecIsPoly} for some non-singular matrix $M \in \mathbb{R}^{n \times n}$ and $\bold{d}, \bold{a_i} \in \mathbb{Q}^{\ell}$,  $\Cb', \Ab' \in \mathcal{S}^{n - \ell}$, $i \in [m]$ with $ 1 \leq \ell \leq n$. Let $Z \subseteq \mathbb{Z}^m$ be such that
\begin{align*}
    \max_{\xb} \left \{\bold{b}^\top \xb \, : \, \, \xb \in P \right \} = \max_{\xb} \left \{ \bold{b}^\top \xb \, : \, \, \Diag(\bold{d}) - \sum_{i = 1}^m \Diag(\bold{a_i})x_i \succeq \zb \right\}
    \end{align*}
    for all $\bold{b} \in Z$. Then there exists a linear matrix inequality describing $P$ that is totally dual integral on $Z$. 
\end{theorem}
\begin{proof}
    Let $Q = \{\xb \in \mathbb{R}^m \, : \, \, \Diag(\bold{d}) - \sum_{i=1}^m \Diag(\bold{a_i})x_i \succeq \zb\}$. Since $\bold{d}$ and $\bold{a_i}$ are rational for all $i \in [\ell]$, it follows from Giles and Pulleyblank~\cite{GilesPulleyblank} that there exists some totally dual integral representation of $Q$, i.e., $Q = \{\xb \in \mathbb{R}^m \, : \, \, \bold{\hat{A}}\xb \leq \bold{\hat{d}}\}$ for some $\bold{\hat{A}} \in \mathbb{Z}^{\ell' \times m}, \bold{\hat{d}} \in \mathbb{Q}^{\ell'}$ with $\bold{\hat{A}}\xb \leq \bold{\hat{d}}$ TDI. For all $i \in [m]$, let $\bold{\hat{a}_i}$ denote the $i$th column of $\bold{\hat{A}}$. Then, $P$ can be written as
    \begin{align} \label{Eq:TDI_halfpolyhedral}
        P = \left\{ \xb \in \mathbb{R}^m \, : \, \, \begin{bmatrix}
            \Cb' - \sum_{i = 1}^m \Ab' x_i & \zb \\
            \zb & \Diag(\bold{\hat{d}}) - \sum_{i = 1}^m \Diag(\bold{\hat{a}_i})x_i
        \end{bmatrix} \succeq \zb \right\}.  
    \end{align}
    We will show that the LMI in~\eqref{Eq:TDI_halfpolyhedral} is totally dual integral on $Z$. For any $\bold{b} \in Z$, we have that 
    \begin{align*}
        \max_{\xb} \left \{\bold{b}^\top \xb \, : \, \, \xb \in P \right \} = \max_{\xb}\left \{\bold{b}^\top \xb \, : \, \, \xb \in Q \right\} = \min_{\yb} \left \{\bold{\hat{d}}^\top \yb \, : \, \, \yb \geq \zb,~\yb^\top \bold{\hat{A}} = \bold{b}^\top \right \}. 
    \end{align*}
    By construction, the minimization problem above has an optimal solution $\bold{\hat{y}} \in \mathbb{Z}_+^{\ell'}$. Now, we define
    \begin{align*}
        \bold{\hat{X}} := \begin{bmatrix}
            \zb & \zb \\
            \zb & \Diag(\bold{\hat{y}})
        \end{bmatrix} \in \mathbb{S}^{n - \ell}_+ \oplus \mathbb{S}^{\ell'}_+. 
    \end{align*}
    It follows from above that 
      $        \left \langle \begin{bsmallmatrix}
            \Cb' & \zb \\
            \zb & \Diag(\bold{\hat{d}})
        \end{bsmallmatrix}, \bold{\hat{X}} \right \rangle = \bold{\hat{d}}^\top \bold{\hat{y}}$ and  $\left \langle \begin{bsmallmatrix}
            \Ab' & \zb \\
            \zb & \Diag(\bold{\hat{a}_i})
        \end{bsmallmatrix}, \bold{\hat{X}} \right \rangle = b_i$  for all  $i \in [m]. $ 
    Therefore, $\bold{\hat{X}}$ is optimal to the SDP dual to $\max_{\xb}\{\bold{b}^\top \xb \, : \, \, \xb \in P\}$. By construction, $\bold{\hat{X}}$ is an integer conical combination of matrices in the set $\mathcal{V} = \{ \zb \oplus \Diag(\bold{e_i}) \, : \, \, i \in [\ell']\}$ of integer PSD matrices. We conclude that the LMI given in~\eqref{Eq:TDI_halfpolyhedral} is totally dual integral on $Z$.
\end{proof}
Our final condition for total dual integrality on a set $Z$ is not related to the polyhedrality of the spectrahedron induced by the linear matrix inequality, but related to the feasible set of its corresponding dual problem to be polyhedral. It is possible for a spectrahedron to be non-polyhedral, while the feasible set of its dual problem is polyhedral. For instance, consider the non-polyhedral spectrahedron $Q = \{\xb \in \mathbb{R}^2 \, : \, \, x_2 \geq x_1^2/2 \}$ considered in Example~\ref{Example:spectrahedra}. For any $\bold{b} \in \mathbb{Z}^2_{-}$, its dual feasible set is given by 
$\left\{ \begin{bsmallmatrix}
        x_1 & x_2 \\ x_2 & x_3
    \end{bsmallmatrix} \in \mathcal{S}^2 \, : \, \, x_1 \geq -{b_1^2}/{2b_2},~ x_2 = -{1}/{2}b_1,~x_3 = -{1}/{2}b_2 \right\},$
which is polyhedral. Let us formalize the criterion of polyhedrality of the dual feasible set.

\begin{definition} \label{Def:FDG}
    The set $\{\bold{A_1}, \ldots, \bold{A_m}\}$ is called finitely generative on  $Z \subseteq \mathbb{Z}^m$ if there exists a finite set of integer PSD matrices $\mathcal{V} = \{\bold{V_1}, \ldots, \bold{V_k}\}$ such that
    $    \left\{ \bold{X} \, : \, \, \langle \bold{A_i}, \bold{X} \rangle = b_i,~i \in [m],~\bold{X} \succeq \bold{0} \right\}    $
    is contained in $\cone (\mathcal{V})$ for all integer vectors $\bold{b} \in Z$. 
\end{definition}
The condition of the dual feasible set to be polyhedral is also considered in recent works on SDP exactness \cite{WangKilincKarzan}.
Observe that if $\{\bold{A_1}, \ldots, \bold{A_m}\}$ is finitely generative, then $\{\bold{X} \, : \, \,  \langle \bold{A_i}, \bold{X} \rangle = b_i,~i \in [m],~\bold{X} \succeq \bold{0}\}$ is polyhedral for all $\bold{b} \in Z$ (since $\cone(\mathcal{V}) \subseteq \mathcal{S}^n_+$). Moreover, if $\{\bold{A_1}, \ldots, \bold{A_m}\}$ is finitely  generative on $Z$, then $\{t\bold{A_1}, \ldots, t\bold{A_m}\}$ is also finitely generative on $Z$ for any scalar $t > 0$. 

As shown below, the constraint matrices being finitely generative and integer is a sufficient condition for the existence of a totally dual integral description of the spectrahedron. 

\begin{theorem} \label{Thm:TDI_FDG}
Let $\Cb - \sum_{i = 1}^m \Ab x_i \succeq \zb$ be an LMI satisfying Slater's condition with $\{\bold{A_1}, \ldots, \bold{A_m}\} \subseteq \mathbb{Z}^{n \times n}$ finitely generative on $Z$. Then, the spectrahedron $P = \left\{ \bold{x} \in \mathbb{R}^m \, : \, \, \bold{C} - \sum_{i = 1}^m \bold{A_i}x_i \succeq \bold{0} \right\}$ can be described by a linear matrix inequality that is totally dual integral on $Z$. 
\end{theorem}
\begin{proof}
Let $\mathcal{V} = \{\bold{V_1}, \ldots, \bold{V_k}\}$ denote the finite set of integer PSD matrices corresponding to $\{\bold{A_1}, \ldots, \bold{A_m}\}$ in Definition~\ref{Def:FDG}. Let $\bold{b} \in Z$ and let $t > 0$ be a positive rational number. We consider the following semidefinite program and its dual: 
    \begin{align}
        & \sup_{\bold{x}} \left\{ \bold{b}^\top \bold{x} \, : \, \, t\bold{C} - \sum_{i =1}^m t\bold{A_i}x_i \succeq \bold{0} \right\} =  \inf_{\bold{X}} \left\{\langle t\bold{C}, \bold{X} \rangle \, : \, \, \langle t\bold{A_i}, \bold{X} \rangle = b_i,~i \in [m],~ \bold{X} \succeq \bold{0} \right\}. \label{dual_FG_TDI}
    \end{align}
Based on the fact that $\{\bold{A_1}, \ldots, \bold{A_m}\}$ is finitely generative, we know that the feasible set of the minimization problem in~\eqref{dual_FG_TDI} is contained in $\cone (\mathcal{V})$. Since we also know the minimum is attained due to Slater's condition, we can rewrite the dual problem as follows: 
\begin{align*}
    & \min_{\bold{X}} \Big\{\langle t\bold{C}, \bold{X} \rangle \, : \, \, \langle t\bold{A_i}, \bold{X} \rangle = b_i,~i \in [m],~ \bold{X} = \alpha_1 \bold{V}_1 + \dots + \alpha_k \bold{V_k},~ \mathbold{\alpha}\geq \bold{0} \Big\} \\
    = & \min_{\bold{X}} \left\{ \langle t \bold{C}, \bold{X} \rangle \, : \begin{aligned} \, \, t~\triu (\bold{A_i})^\top \svec(\bold{X}) = b_i,~~ i \in [m],~~\mathbold{\alpha} \geq \bold{0}  \\
    ~\svec(\bold{X}) - \alpha_1 \svec (\bold{V_1}) - \dots - \alpha_k \svec(\bold{V_k})= 0 \end{aligned}  \right\} \\
    = & \min_{\bold{X}} \left\{ \langle t \bold{C}, \bold{X} \rangle \, : \, \, \begin{bmatrix} t\bold{A'} & \bold{0} \\
    \bold{I} & - \bold{V'}
    \end{bmatrix} \begin{bmatrix}
        \svec(\bold{X}) \\ 
        \mathbold{\alpha}
    \end{bmatrix} = \begin{bmatrix}
        \bold{b} \\ \bold{0}
    \end{bmatrix} ,~ \mathbold{\alpha}\geq \bold{0} \right\},
\end{align*}
where $\bold{A'} := \begin{bmatrix}
    \triu (\bold{A_1}) & \dots & \triu (\bold{A_m})
\end{bmatrix}^\top$, $\bold{V'} := \begin{bmatrix}
    \svec (\bold{V_1}) & \dots & \svec (\bold{V_k})
\end{bmatrix}$, $\triu: \mathcal{S}^n \rightarrow \mathbb{R}^{\frac{1}{2}(n^2 + n)}$ is the operator that maps a matrix to a vector containing its upper-triangular entries and $\svec : \mathcal{S}^n \rightarrow \mathbb{R}^{\frac{1}{2}(n^2 + n)}$ is the symmetric vectorization operator that maps a matrix to a column vector containing its upper-triangular part with weight two on the off-diagonal elements and weight one on the diagonal elements.
The linear system in the dual problem above can be written as
\begin{align*}
    t\begin{bmatrix} \bold{A'} & \bold{0} \\
    \bold{I} & - \bold{V'}
    \end{bmatrix} \begin{bmatrix}
        \svec(\bold{X}) \\ 
        \mathbold{\alpha}
    \end{bmatrix} = \begin{bmatrix}
        \bold{b} \\ \bold{0}
    \end{bmatrix}, \quad \text{or equivalently,} \quad \begin{bmatrix} \bold{A'} & \bold{0} \\
    \bold{I} & - \bold{V'}
    \end{bmatrix} \begin{bmatrix}
        \svec(\bold{X}) \\ 
        \mathbold{\alpha}
    \end{bmatrix} = \frac{1}{t}\begin{bmatrix}
        \bold{b} \\ \bold{0}
    \end{bmatrix}. 
\end{align*}
Each basic feasible solution to this system with $\mathbold{\alpha} \geq \bold{0}$ is the unique solution to one of its non-singular subsystems. Following the proof by Giles and Pulleyblank~\cite{GilesPulleyblank}, it is possible to find a rational number $t^*$ such that for all $\bold{b} \in Z$, there exists an optimal solution that satisfies $\svec (\bold{X}) \in \mathbb{Z}^{\frac{1}{2}(n^2 +n)}$ and $\mathbold{\alpha} \in \mathbb{Z}^k$. 
When mapping $\svec(\bold{X})$ back to $\bold{X} \in \mathcal{S}^n$, it follows that the SDP dual to $ \max\{\bold{b}^\top \bold{x} \, : \, \, t^* \bold{C} - \sum_{i = 1}^m t^* \bold{A_i}x_i \succeq \bold{0} \}$ for all $\bold{b} \in Z$ has an optimal solution $\bold{X}$ satisfying 
$ \bold{X} = \sum_{j \in [k]} \alpha_j \bold{V_j},~ \alpha_j \geq 0,~j \in [k]. $
with $\mathbold{\alpha}$ integer. Hence, property \ref{PZ} holds for $\bold{X}$. We conclude that $ t^* \bold{C} - \sum_{i = 1}^m t^* \bold{A_i}x_i \succeq \bold{0}$ is a linear matrix inequality describing $P$ that is totally dual integral on $Z$.
\end{proof}
}

\subsection{Strengthened Chv\'atal-Gomory cuts} \label{Section:SCGcuts}
Dash et al.\ \cite{DashEtAl} consider a strengthening of the CG {cuts} for rational polyhedra. We briefly present here their approach that can be applied to general convex sets.

For all $\bold{c}\in \mathbb{Z}^m$ such that $P \subseteq \{\bold{x} \in \mathbb{R}^m \, : \, \, \bold{c}^\top\bold{x} \leq d\}$, the corresponding CG cut is $\bold{c}^\top \bold{x} \leq \lfloor d \rfloor$. The validity of this cut follows from the inequality
$
\lfloor d \rfloor \geq \max \left\{ \bold{c}^\top \bold{x} \, : \, \, \bold{c}^\top \bold{x} \leq d, \, \bold{x} \in \mathbb{Z}^m \right\},
$
where equality holds if the entries in $\bold{c}$ are relatively prime. However, the gap between $\lfloor d \rfloor$ and $\max\{ \bold{c}^\top \bold{x} \, : \, \, \bold{x} \in P \cap \mathbb{Z}^m\}$ can generally be very large. In order to reduce this gap, suppose that we know that $P \cap \mathbb{Z}^m$ is contained in some set $S \subseteq \mathbb{Z}^m$. Given a valid inequality $\bold{c}^\top \bold{x} \leq d$ for $P$, we define
\begin{align} \label{eq:strongfloor}
\lfloor d \rfloor_{S,c} := \max \left\{\bold{c}^\top \bold{x} \, : \, \, \bold{c}^\top \bold{x} \leq d, \, \bold{x} \in S \right\}.
\end{align}
By construction, $\bold{c}^\top \bold{x} \leq \lfloor d \rfloor_{S,c}$ is valid for $P \cap \mathbb{Z}^m$. We refer to these type of cuts as $S$-Chv\'atal-Gomory ($S$-CG) cuts. These cuts are at least as strong as standard CG cuts, since taking $S = \mathbb{Z}^m$ provides the standard CG cut. The geometric interpretation of an $S$-CG cut is that we shift the hyperplane $\{\bold{x} \in \mathbb{R}^m \, : \, \, \bold{c}^\top \bold{x} = d\}$ in the direction of $P \cap \mathbb{Z}^m$ until it hits a point in $S$. An example for $S$ is the set $\{0,1\}^{m}$ in the case of binary optimization problems.

\section{A CG-based branch-and-cut algorithm for ISDPs} \label{Section:B&C}

Solving ISDPs is a relatively new field of research for which only a few general-purpose solution approaches have been proposed. Gally et al.~\cite{GallyEtAl} present a B\&B algorithm called SCIP-SDP for solving (M)ISDPs  with continuous SDPs as subproblems. Alternatively, Kobayashi and Takano~\cite{KobayashiTakano} propose a B\&C algorithm that initially relaxes the PSD constraint and solves a mixed integer linear program (MILP), where the PSD constraint is imposed dynamically via cutting planes. 
Numerical results in \cite{KobayashiTakano} show that the B\&C algorithm of \cite{KobayashiTakano} outperforms the B\&B algorithm of \cite{GallyEtAl}.
The difference can be explained by the high performance of the current MILP solvers compared to the much less robust conic interior point methods that are used in \cite{GallyEtAl}. It has to be noted, however, that an older version of SCIP-SDP with DSDP \cite{BensonEtAl} as SDP solver was used in the computational results of \cite{KobayashiTakano}. {The authors of~\cite{MatterPfetsch} also compare the two approaches and conclude that SCIP-SDP  is much faster on average   than the approach by Kobayashi and Takano. However, they use  Mosek~\cite{Mosek} as an SDP solver and  an improved implementation of  SCIP-SDP.}
Another project that encounters MISDPs is YALMIP~\cite{Lofberg}, although its performance is inferior compared to the other two methods \cite{GallyEtAl,KobayashiTakano}.

In this section we present a generic B\&C algorithm for solving ISDPs that exploits CG cuts of the underlying spectrahedron. This algorithm can be seen as an extension of the works of \cite{KobayashiTakano, CezikIyengar}.
In Section~\ref{Subsection:B&C} we provide a general B\&C framework for ISDPs which uses a cut generation routine based on $S$-CG cuts. Section \ref{SeparationBinarySDP} presents a separation routine for the special class of binary SDPs. 

\subsection{Generic Branch-and-Cut framework} \label{Subsection:B&C}
We start this section by presenting the B\&C framework proposed by Kobayashi and Takano~\cite{KobayashiTakano}
for ISDPs in standard dual form, see \eqref{dualISDP}. However, the approach can be extended to problems in primal form in a straightforward way.
We define
\begin{align} \label{SetF}
\mathcal{F} := \left\{ \bold{x} \in \mathbb{R}^m \, : \, \, \diag\left(\bold{C} - \sum_{i = 1}^m \bold{A_i}x_i \right) \geq 0 \right\},
\end{align}
which can be seen as the polyhedral part of the spectrahedron $P$, see \eqref{def:P}. We assume that the problem of maximizing $\bold{b}^\top \bold{x}$ over $\mathcal{F}$ is bounded, which is a non-restrictive assumption whenever the original ISDP is bounded.

The B\&C  algorithm of \cite{KobayashiTakano} is based on a dynamic constraint generation known as a lazy constraint callback.
The algorithm starts with optimizing over the set $\mathcal{F}\cap \mathbb{Z}^m $, i.e.,
\begin{align} \label{RootNode}
\max \left\{ \bold{b}^\top \bold{x} \, : \, \, \bold{x} \in \mathcal{F} \cap \mathbb{Z}^m  \right\},
\end{align}
which can be solved using a B\&B algorithm. Whenever an integer point $\hat{\bold{x}}$ is found in the branching tree, it is verified whether $\bold{C} - \sum_{i = 1}^m\bold{A_i}\hat{x}_i \succeq \mathbf{0}$ is satisfied.
If so, the solution is feasible for $(D_{ISDP})$ and provides a possibly better lower bound to prune other nodes in the tree.
If not, then $\langle \bold{C} - \sum_{i  =1}^m \bold{A_i}\hat{x}_i, \bold{dd}^\top \rangle < 0$ where $\bold{d}$ is a normalized eigenvector corresponding to the smallest eigenvalue of $\bold{C} - \sum_{i  =1}^m \bold{A_i}\hat{x}_i$.
This leads to the following valid constraint for $(D_{ISDP})$:
\begin{align} \label{psdcut:standard}
\left \langle \bold{C} - \sum_{i  =1}^m \bold{A_i}x_i, \bold{dd}^\top \right \rangle \geq 0, \quad  \text{or equivalently,} \quad  \sum_{i=1}^m \langle \bold{A_i}, \bold{dd}^\top \rangle x_i \leq \langle \bold{C}, \bold{dd}^\top \rangle ,
\end{align}
which separates $\bold{\hat{x}}$ from $P$.
Now, the algorithm adds to $\mathcal{F}$ a cut of type \eqref{psdcut:standard} to cut off the current point and continues the branching scheme using this additional constraint. This process is iterated until the optimality of a solution for $(D_{ISDP})$ is guaranteed by the B\&B procedure.

It follows from the Rayleigh principle that $\langle \bold{C} - \sum_{i = 1}^m \bold{A_i} \hat{x}_i , \bold{U} \rangle$ is minimized by taking $\bold{U} = \bold{dd}^\top$ with $\bold{d}$ as defined above. In that sense, the cut \eqref{psdcut:standard} is the strongest cut with respect to violation in the PSD constraint. However, this type of separator ignores the fact that an optimal solution is also integer. 
We now propose an alternative stronger separator based on the CG procedure that exploits both the PSD and the integrality constraint.

Let $S \subseteq \mathbb{Z}^m$ be a set containing the feasible set of $(D_{ISDP})$, {with $S = \mathbb{Z}^m$ in case of no prior knowledge about the problem. If $\bold{\hat{x}} \notin P$, and consequently $\bold{\hat{x}} \notin \cl(P)$, it follows from \eqref{eq:coniccuts} that} there exists a dual multiplier $\bold{U} \in \mathcal{S}^n_+$ with $\langle \bold{A_i}, \bold{U} \rangle \in \mathbb{Z}$  for all $i \in [m]$, such that $\sum_{i = 1}^m \langle \bold{A_i}, \bold{U} \rangle \hat{x}_i > \lfloor \langle \bold{C}, \bold{U} \rangle \rfloor$. 
Taking such $\bold{U}$ and defining $\bold{v(U)} := \left( \langle \bold{A_1}, \bold{U} \rangle, \ldots, \langle \bold{A_m}, \bold{U} \rangle \right)^\top$, we obtain the following $S$-CG cut:
\begin{align}
\sum_{i = 1}^m \langle \bold{A_i}, \bold{U} \rangle x_i & \leq \lfloor \langle \bold{C}, \bold{U} \rangle \rfloor_{S,\bold{v(U)}}, \label{psdcut:strong}
\end{align}
\noindent see \eqref{eq:strongfloor}. The cut \eqref{psdcut:strong} exploits both the PSD and the integrality constraints in $(D_{ISDP})$ by separating $\bold{\hat{x}}$ from $\cl(P)$ instead of only from $P$. As $\cl(P) \subseteq P$ for bounded spectrahedra, this type of cut is possibly stronger than the eigenvalue cut \eqref{psdcut:standard} for all $S$ containing $P \cap \mathbb{Z}^m$. Figure~\ref{fig:B&Cexample} depicts a simplified example indicating the geometric difference between the cuts \eqref{psdcut:standard} and \eqref{psdcut:strong}.

\begin{wrapfigure}{r}{7.3cm}
\centering
\vspace{-0.3cm}
\includegraphics[scale=0.7]{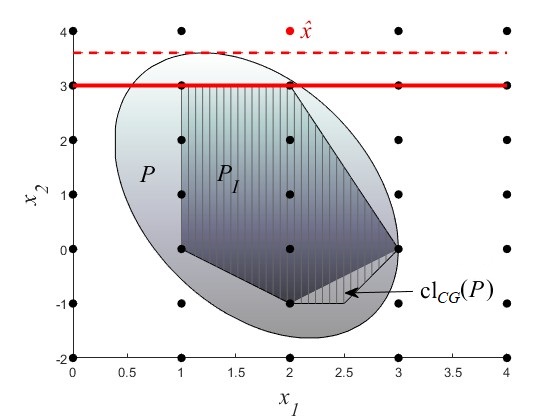}
\caption{Simplified example of strengthened separation routine on spectrahedron $P$ from Example~\ref{Example:spectrahedra}. 
The dotted line shows an eigenvalue cut \eqref{psdcut:standard} separating $\bold{\hat{x}}$ from $P$, the solid line shows a CG cut \eqref{psdcut:strong} separating $\bold{\hat{x}}$ from $\cl(P)$, where $S = \mathbb{Z}^m$. \label{fig:B&Cexample}}
\vspace{-0.5cm}
\end{wrapfigure}

It is not clear in general how to find an appropriate {cut~\eqref{psdcut:strong} separating $\bold{\hat{x}}$ from $\cl(P)$}. 
Indeed, this is closely related to the CG separation problem, which was proven to be $\mathcal{NP}$-hard even for polytopes contained in the unit hypercube, see  Cornu\'{e}jols et al.~\cite{CornuejolsLeeLi}.
Fischetti and Lodi~\cite{FischettiLodi} show how to solve the separation problem for polyhedra using a mixed integer programming problem. Extending their procedure to the class of spectrahedra, implies solving a MISDP. 
Instead, we can adopt problem-specific separation routines that are efficient and provide strong cuts. {For instance, in the next subsection we present a separation routine for binary SDPs in primal form.} Moreover, we later provide various separation routines for cuts of the form \eqref{psdcut:strong} for the quadratic traveling salesman problem.

Alongside extending the approach of Kobayashi and Takano \cite{KobayashiTakano}, our framework also continues on the work of \c{C}ezik and Iyengar \cite{CezikIyengar}. In \cite{CezikIyengar} CG cuts for binary conic programs are introduced. It is noted that there is no method known for separating CG cuts from fractional points, and consequently the CG cuts are not included in the numerical experiments of \cite{CezikIyengar}. Since our approach separates on integer points only, we partly resolve this issue for certain classes of problems by exploiting the underlying structure of the  programs. As a result, we present the first practical algorithm that utilizes CG cuts in conic problems.

We end this section by providing a pseudocode of the B\&C framework, see Algorithm~\ref{AlgB&C}. Suppose \textsc{SeparationRoutine} 
is a separation routine for constructing CG cuts of the form \eqref{psdcut:strong}, where we assume this routine can generate multiple dual matrices at a time.

\begin{algorithm}[H]
\footnotesize
	\SetAlgoLined
	\KwIn{$\bold{C}, \bold{A_i}$, $i \in [m]$ , $S$, $\epsilon > 0$,~~ \textbf{Output:} $\bold{\hat{x}}, OPT := \bold{b}^\top \bold{x}$}
	Initialize $\mathcal{F}$ as defined in (\ref{SetF}).\\
	\textbf{B\&B procedure:} Start or continue the branch-and-bound algorithm for solving the MILP
$\max \left\{ \bold{b}^\top \bold{x} \, : \, \, \bold{x} \in \mathcal{F} \cap \mathbb{Z}^m \right\}$ incorporating the callback function below at each node in the branching tree. \\[0.5em]
	\textbf{Callback procedure:} \If{\text{an integer point $\bold{\hat{x}} \in \mathcal{F}$ is found}}
	    {\eIf{$\lambda_{\min} \left( \bold{C} - \sum_{i = 1}^m \bold{A_i}\hat{x}_i \right) < - \epsilon$}
	    {Call \textsc{SeparationRoutine}$(\bold{C},\bold{A_1},\ldots,\bold{A_m},S,\bold{\hat{x}})$ which provides matrices $\bold{U_j}$, $j \in [K]$. Add the cuts $\sum_{i = 1}^m \langle \bold{A_i}, \bold{U_j} \rangle x_i  \leq \lfloor \langle \bold{C}, \bold{U_j} \rangle \rfloor_{S,\bold{v(U_j)}}$ for $j \in [K]$ to $\mathcal{F}$.} 
{Use $\bold{\hat{x}}$ to cut off other nodes in the branching tree.}
{\textsc{Return to Step 2}}
	         }

	\caption{CG-based B\&C algorithm for solving $(D_{ISDP})$}\label{AlgB&C}
\end{algorithm}

\subsection{A separation routine for binary SDPs} \label{SeparationBinarySDP}
We now focus on binary semidefinite programming problems in primal form, i.e.,
\begin{align}
\tag{$P_{BSDP}$} & \left \{ \begin{aligned} \inf \quad &  \langle \bold{C}, \bold{X} \rangle \\
\text{s.t.} \quad & \langle \bold{A_i} , \bold{X} \rangle = b_i \,\, \mbox{ for all } i \in [m] \\
&  \bold{X} \succeq \mathbf{0}, \,\, \bold{X} \in \{0,1\}^{n \times n}.  \end{aligned} \right. \label{primalBSDP}
\end{align}
In this section we present a separation routine for generating CG cuts for problems of the form \eqref{primalBSDP} and provide {an illustrative example}. To do so, we use the following characterization of binary PSD matrices. 

\begin{proposition}[Letchford and S{\o}rensen \cite{LetchfordSorensen}]
Let $\bold{X} \in \{0,1\}^{n \times n}$ be a symmetric matrix. Then $\bold{X} \succeq \mathbf{0}$ if and only if $\bold{X} = \sum_{i = 1}^k \bold{x_ix_i}^\top$ for some $\bold{x_i} \in \{0,1\}^n$, $i \in [k]$. \label{Prop:binPSD}
\end{proposition}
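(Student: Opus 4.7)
The forward implication is immediate, since each $\bold{x_i}\bold{x_i}^\top$ is PSD and the cone $\mathcal{S}^n_+$ is closed under addition. So the plan is to focus on the nontrivial direction: given $\bold{X} \in \{0,1\}^{n\times n} \cap \mathcal{S}^n_+$, I will construct a decomposition in which the vectors $\bold{x_i}$ are indicator vectors of \emph{pairwise disjoint} subsets of $[n]$. Note that such a decomposition is in fact forced: if two subsets $S_l, S_{l'}$ intersected at some index $i$, then $\left(\sum_l \bold{\mathbbm{1}_{S_l}}\bold{\mathbbm{1}_{S_l}}^\top\right)_{ii} \geq 2$, contradicting $X_{ii} \in \{0,1\}$.

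The starting point will be a Gram representation $\bold{X} = \bold{V}^\top \bold{V}$ with column vectors $\bold{v_1}, \ldots, \bold{v_n}$, so that $X_{ij} = \bold{v_i}^\top \bold{v_j}$. From $X_{ii} \in \{0,1\}$ we get $\|\bold{v_i}\|^2 \in \{0,1\}$. In the case $X_{ii} = 0$, we have $\bold{v_i} = \bold{0}$, and hence $X_{ij} = 0$ for every $j$. So only the principal submatrix on $N := \{i \in [n] : X_{ii} = 1\}$ carries information.

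Next, for any $i,j \in N$ the vectors $\bold{v_i}, \bold{v_j}$ are unit vectors, so $X_{ij} = \bold{v_i}^\top \bold{v_j} \in [-1,1]$. Combined with $X_{ij} \in \{0,1\}$, the Cauchy--Schwarz equality condition yields
\begin{equation*}
X_{ij} = 1 \quad \Longleftrightarrow \quad \bold{v_i} = \bold{v_j}.
\end{equation*}
This relation on $N$ is reflexive (from $X_{ii} = 1$), symmetric (from $\bold{X} = \bold{X}^\top$) and transitive (from $\bold{v_i} = \bold{v_j}$ and $\bold{v_j} = \bold{v_k}$), so it defines an equivalence relation that partitions $N$ into classes $S_1, \ldots, S_k$.

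Finally, I will verify entrywise that $\bold{X} = \sum_{l = 1}^k \bold{\mathbbm{1}_{S_l}} \bold{\mathbbm{1}_{S_l}}^\top$: an entry $(i,j)$ equals $1$ on the left precisely when $i$ and $j$ lie in a common class $S_l$, which matches the right-hand side exactly. Setting $\bold{x_l} := \bold{\mathbbm{1}_{S_l}}$ then gives the required decomposition. I do not anticipate a serious obstacle here; the only delicate step is the Cauchy--Schwarz equality argument, which hinges crucially on the unit-norm property provided by $X_{ii} = 1$ and the binarity of the off-diagonal entries.
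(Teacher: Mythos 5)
Your argument is correct and complete. One point of context first: the paper does not actually prove this proposition — it is imported verbatim from Letchford and S{\o}rensen with a citation and no proof — so there is no in-paper proof to compare against line by line. The closest the paper comes is the proof of the \emph{subsequent} proposition (the dominated-diagonal / conflicting-vertex characterization), where the same decomposition $\mathbf{X} = \sum_{D} \mathbf{\mathbbm{1}_D}\mathbf{\mathbbm{1}_D}^\top$ is obtained by a purely combinatorial route: one defines $D(i) = \{j : x_{ij} = 1\}$, shows each $D(i)$ is a clique (no conflicting vertex) and that any two such sets are equal or disjoint, and reads off the partition. Your route is genuinely different: you take a Gram factorization $\mathbf{X} = \mathbf{V}^\top\mathbf{V}$, use $X_{ii} \in \{0,1\}$ to split off the zero rows, and then use the Cauchy--Schwarz equality case on unit vectors (equivalently $\|\mathbf{v_i}-\mathbf{v_j}\|^2 = 2 - 2X_{ij}$) to show $X_{ij}=1 \Leftrightarrow \mathbf{v_i}=\mathbf{v_j}$, so that the relation is an equivalence relation whose classes give the indicator vectors. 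The linear-algebraic route buys you transitivity for free (it is just equality of vectors), whereas the combinatorial route has to first extract the $2\times 2$ and $3\times 3$ certificates from positive semidefiniteness; on the other hand, the combinatorial version is the one that localizes \emph{which} small principal submatrix violates PSD-ness, which is exactly what the paper's separation routine (Algorithm~\ref{AlgSeparation}) needs. Both are valid proofs of the nontrivial direction, and your entrywise verification at the end, including the $X_{ii}=0$ rows, is airtight.
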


Each vector $\bold{x_i}$ in Proposition~\ref{Prop:binPSD} may be thought of as the characteristic vector of a clique in the complete graph $K_n$. 
Therefore, $\bold{X}$ provides a decomposition 
of $K_n$ into a set of non-overlapping cliques.

Suppose we solve \eqref{primalBSDP} using the B\&C algorithm presented in Section~\ref{Subsection:B&C}. In a certain node in the branching tree we have obtained a symmetric matrix $\bold{\hat{X}} \in \{0,1\}^{n\times n}$ that satisfies $\langle \bold{A_i}, \bold{\hat{X}} \rangle = b_i$ for all $i \in [m]$. The separation oracle that we present below distinguishes two types of certificates for $\bold{\hat{X}}$ not being positive semidefinite. The first one is obtained by a so-called dominated diagonal, i.e., $\hat{X}_{ii} = 0$, while $\hat{X}_{ij} = 1$ for some $j$, which clearly implies that $\bold{\hat{X}} \nsucceq \mathbf{0}$. {The second certificate is the presence of a so-called conflicting vertex, i.e., a vertex that is contained in two separate cliques implied by $\bold{\hat{X}}$.} By Proposition~\ref{Prop:binPSD}, it follows that $\bold{\hat{X}} \nsucceq \mathbf{0}$. These certificates correspond to the existence of the following induced submatrices in $\bold{\hat{X}}$ (up to a permutation of the rows and columns):
$$
\small
\bbordermatrix{
& i & j \cr
i & 0 & 1 \cr 
j & 1 & \star
}
\quad \text{and} \quad
\bbordermatrix{
 & i & j & k \cr
i & 1 & 1 & 1 \cr
j & 1 & 1 & 0 \cr
k & 1 & 0 & 1
},
$$
where $\star$ indicates a position that can be either 0 or 1. The following result shows that these certificates are necessary and sufficient to characterize positive semidefiniteness.
\begin{proposition} Let $\bold{\hat{X}} = ({\hat{x}_{ij}})$ be binary and symmetric. Then, $\bold{\hat{X}}$ is positive semidefinite if and only if $\bold{\hat{X}}$ contains no dominated diagonal or conflicting vertex. 
\end{proposition}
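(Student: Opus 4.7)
The plan is to reduce both directions to the characterization of Letchford and S\o{}rensen (Proposition \ref{Prop:binPSD} in the excerpt): a binary symmetric matrix is PSD precisely when it is the characteristic matrix of a family of node-disjoint cliques in $K_n$. Given this, the two forbidden configurations are exactly the two local obstructions that prevent a binary symmetric $0/1$ matrix from encoding a node-disjoint clique cover.

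For the ``only if'' direction, I would assume $\bold{\hat X} \succeq \bold{0}$, write $\bold{\hat X} = \sum_{i=1}^k \bold{x_i}\bold{x_i}^\top$ with the $\bold{x_i} \in \{0,1\}^n$ supported on pairwise disjoint vertex sets $V_i$, and rule out each certificate. A dominated diagonal at row $i$ would force $i \notin \bigcup V_l$ (since $\hat{X}_{ii}=0$), yet the entry $\hat{X}_{ij}=1$ requires $i,j \in V_l$ for some $l$, a contradiction. A conflicting vertex at $i$ (with neighbours $j,k$ such that $\hat{X}_{ij}=\hat{X}_{ik}=1$, $\hat{X}_{jk}=0$, and $\hat{X}_{ii}=\hat{X}_{jj}=\hat{X}_{kk}=1$) would force $i,j$ to share a clique $V_l$ and $i,k$ to share a clique $V_{l'}$; by disjointness $l = l'$, so $j,k \in V_l$, giving $\hat{X}_{jk}=1$, again a contradiction.

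For the ``if'' direction, I would construct the decomposition explicitly. Define the graph $G = (V,E)$ with $V := \{i \in [n] : \hat{X}_{ii} = 1\}$ and $E := \{ \{i,j\} \subseteq V : i \neq j,\ \hat{X}_{ij} = 1\}$. The absence of a dominated diagonal ensures that $\hat{X}_{ij}=0$ whenever $i \notin V$ or $j \notin V$, so the entire support of $\bold{\hat X}$ lives on $V \times V$. The key observation is that the absence of a conflicting vertex is exactly the statement that $G$ is $P_3$-free on induced subgraphs: any two edges $\{i,j\}, \{i,k\} \in E$ force $\{j,k\} \in E$, otherwise $(i,j,k)$ would realize the forbidden $3\times 3$ pattern. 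A graph in which ``edge-edge at a common vertex implies the third edge'' is precisely a disjoint union of cliques (the connected components are cliques). Let $V_1,\ldots,V_k$ denote these cliques and set $\bold{x_l} := \bold{\mathbbm{1}_{V_l}}$. Then $\sum_{l=1}^k \bold{x_l}\bold{x_l}^\top$ agrees with $\bold{\hat X}$ on $V\times V$ (by the clique-partition property) and both are zero elsewhere, so $\bold{\hat X} = \sum_{l=1}^k \bold{x_l}\bold{x_l}^\top \succeq \bold{0}$, invoking Proposition \ref{Prop:binPSD} in reverse.

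The only genuinely non-routine step is the passage from ``no induced $P_3$'' to ``disjoint union of cliques'', and it is an elementary graph-theoretic fact: if $i \sim j$ and $j \sim k$ in a connected component, the absence of the forbidden pattern at $j$ forces $i \sim k$, so each connected component is complete. Everything else is bookkeeping. I do not expect hidden subtleties, which is consistent with the authors presenting this as a self-contained characterization following directly from the Letchford--S\o{}rensen decomposition.
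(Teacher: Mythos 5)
Your proof is correct and follows essentially the same route as the paper: both directions reduce to the Letchford--S{\o}rensen clique decomposition (Proposition~\ref{Prop:binPSD}), and the sufficiency direction builds the node-disjoint cliques explicitly from the neighbourhoods of the diagonal-one vertices. If anything, you are slightly more explicit than the paper in noting that the absence of a dominated diagonal is what confines the support of $\bold{\hat{X}}$ to the diagonal-one vertices, a step the paper's proof uses only implicitly when writing $\bold{\hat{X}} = \sum_{D \in \mathcal{D}}\bold{\mathbbm{1}_D}\bold{\mathbbm{1}_D}^\top$.
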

\begin{proof}
Necessity follows from the discussion above. To prove {sufficiency}, let $D(i) := \{j \in [n] \, : \, \, \hat{x}_{ij} = 1 \}$ for all $i \in [n]$ with $\hat{x}_{ii} = 1$. If $\hat{x}_{ij} = 1$ and $\hat{x}_{ik} = 1$, it must follow that $\hat{x}_{jk} = 1$, otherwise $i$ would be conflicting. Hence, the sets $D(i)$ for all $i$ with $\hat{X}_{ii} = 1$ are cliques. Since $i \in D(j)$ if and only if $j \in D(i)$, {it follows that the collection $\mathcal{D}$ of all distinct sets $D(i)$ is a set of non-overlapping cliques.}
Then, $\bold{\hat{X}} = \sum_{D \in \mathcal{D}}\bold{\mathbbm{1}_D \mathbbm{1}_D}^\top$, hence $\bold{\hat{X}} \succeq \zb$  by Proposition~\ref{Prop:binPSD}. 
\end{proof}

In case of a dominated diagonal, i.e., indices $i,j \in [n], i \neq j$ with $\hat{x}_{ii} = 0$ and $\hat{x}_{ij} = 1$, the dual matrix $\bold{U} = (\mathbf{e}_i - \mathbf{e}_j)(\mathbf{e}_i - \mathbf{e}_j)^\top$ separates $\bold{\hat{X}}$ from $\mathbb{S}^n_+$. In case of a conflicting vertex, say $i$, with $\hat{x}_{ij} = 1$, $\hat{x}_{ik} = 1$, but $\hat{x}_{jk} = 0$, the dual matrix $\bold{U} = (\mathbf{e}_j + \mathbf{e}_k - \mathbf{e}_i)(\mathbf{e}_j + \mathbf{e}_k - \mathbf{e}_i)^\top$ provides a separating hyperplane. Since dominated diagonals and conflicting vertices can be found efficiently by enumeration, this approach defines an efficient separation routine for binary SDPs in primal form.

The cuts $\langle \bold{U}, \bold{X} \rangle \geq 0$ can be further strengthened by exploiting the affine constraints in a CG rounding step. We show how this can be done for a class of binary semidefinite programming problems that often appears in relaxations of combinatorial problems.

\begin{example}[Binary SDPs over the simplex]
Many combinatorial optimization problems have formulations including a constraint on the trace of the matrix variable, i.e.,
\begin{align}
\tag{$P_2$} \left \{ \begin{aligned} \inf \quad &  \langle \bold{C}, \bold{X} \rangle \\
\text{s.t.} \quad & \langle \bold{A_i} , \bold{X} \rangle = b_i \,\, \text{for all } i \in [m] \\
& \tr(\bold{X}) = K, 
\,\,  \bold{X} \succeq \mathbf{0}, \,\, \bold{X} \in \{0,1\}^{n \times n}, \end{aligned} \right. \label{exampleP2}
\end{align}
for some $K \in \mathbb{N}$. One can solve \eqref{exampleP2} using Algorithm~\ref{AlgB&C} with $\mathcal{F} := \{ \bold{X} \in \mathcal{S}^n \, : \, \, \langle \bold{A_i}, \bold{X} \rangle = b_i, i \in [m], \, \tr(\bold{X}) = K, \, \mathbf{0} \leq \bold{X} \leq \mathbf{J} \}$.
Assume that the separation routine provides a dual matrix $\bold{U} = (\mathbf{e}_j + \mathbf{e}_k - \mathbf{e}_i)(\mathbf{e}_j + \mathbf{e}_k - \mathbf{e}_i)^\top$ for some distinct $i, j, k$. Taking the linear combination of $\langle \bold{U}, \bold{X} \rangle \geq 0$, $\tr(\bold{X}) = K$ and $x_{ll} \geq 0$ for all $l \notin \{i, j, k\}$, each with weight $\frac{1}{2}$, yields:
\begin{align*}
\left\langle \frac{1}{2}\bold{U} +  \frac{1}{2} \bold{I} + \frac{1}{2} \sum_{l \notin \{i,j,k\}}\bold{E_{ll}}
, \, \bold{X} \right\rangle & \geq \frac{1}{2}K.
\end{align*}
For $K$ odd, we can strengthen the cut by replacing the right-hand side by $\lceil \frac{1}{2}K \rceil$. This procedure can be repeated for dual matrices resulting from a dominated diagonal certificate.
\end{example}

\section{The Chv\'atal-Gomory procedure for ISDP formulations of the QTSP} \label{Section:QTSP}
In this section we provide an in-depth study on solving the Quadratic Traveling Salesman Problem using our B\&C approach. We formally define the \textsc{QTSP} in  Section~\ref{Section:DefQTSP}.
In Section~\ref{Section:SDPforQTSP} we derive two ISDP formulations of the \textsc{QTSP}.
Our first ISDP model exploits the algebraic connectivity of a directed tour.
Our second formulation exploits  the algebraic connectivity of a directed tour and the distance two matrix that originates from the product of a tour matrix with itself.
Finally, in Section~\ref{Section:CutsforQTSP} we derive CG cuts for the two ISDPs and show that we can obtain various classes of well-known cuts in this way.

\subsection{The Quadratic Traveling Salesman Problem} \label{Section:DefQTSP}

Let $G = (N, A)$ be a directed simple graph on $n := |N|$ nodes and $m := |A|$ arcs. A directed cycle $\mathcal{C}$ in $G$ that visits all the nodes exactly once is called a directed Hamiltonian cycle or a directed tour in $G$. For the sake of simplicity, we often omit the adjective `directed' in the sequel.

A tour in $G$ can be represented by a binary matrix $\bold{X} = (x_{ij}) \in \{0,1\}^{n \times n}$ such that $x_{ij} = 1$ if and only if arc $(i,j)$ is used in the tour. We refer to such a matrix as a \emph{tour matrix}. The set of all tour matrices in $G$ is defined as follows:
\begin{align} \label{def:Tau}
\mathcal{T}_n(G) := \left\{ \bold{X}^\mathcal{C} \in \{0,1\}^{n \times n} \, : \, \, x^\mathcal{C}_{ij} = 1 \text{ if and only if } (i,j) \in \mathcal{C} \text{ for Hamiltonian cycle } \mathcal{C} \right\}.
\end{align}
It follows from \eqref{def:Tau} that for all $\bold{X} \in \mathcal{T}_n(G)$ we have $x_{ij} = 0$ if $(i,j) \notin A$. In particular, $\diag(\bold{X}) = \mathbf{0}_n$.
Given a distance matrix $\bold{D} = (d_{ij}) \in \mathbb{R}^{n \times n}$, the (linear) traveling salesman problem (\textsc{TSP}) is the problem of finding a Hamiltonian cycle $\mathcal{C}$ of $G$ that minimizes $ \sum_{(i,j) \in \mathcal{C}}d_{ij}$. As $G$ is directed and $\bold{D}$ is not necessarily symmetric, this version of the problem is sometimes referred to as the asymmetric traveling salesman problem. Using the set defined in \eqref{def:Tau}, we can state the \textsc{TSP} as follows:
\begin{align} \label{def:TSP}
TSP(\bold{D},G) := \min \left \{ \sum_{i = 1}^n \sum_{j = 1}^n d_{ij}x_{ij} \, : \, \, \bold{X} \in \mathcal{T}_n(G) \right\}.
\end{align}
We now define the quadratic version of the \textsc{TSP}, where the total cost is given by the sum of interaction costs between arcs used in the tour. In accordance with most of the literature, we assume that a quadratic cost is incurred only if two arcs are placed in succession on the tour, see e.g.,~\cite{Fischer, Fischer2014,AandFFischer,Jager,RostamiEtAl}. To model this problem, we define the set of the so-called 2-arcs of $G$, i.e.,
\begin{align} \label{def:twoarcs}
\mathcal{A} := \left\{ (i,j,k) \, : \, \, (i,j), (j,k) \in A, |\{i,j,k\}| = 3 \right\},
\end{align}
which consists of all node triples of $G$ that can be placed in succession on a cycle. 
Now let $\bold{Q} = (q_{ijk}) \in \mathbb{R}^{n \times n \times n}$ be a cost matrix such that $q_{ijk}= 0$ if $(i,j,k) \notin \mathcal{A}$. Then the quadratic traveling salesman problem (\textsc{QTSP}) is formulated as:
\begin{align}\label{def:QTSP}
QTSP(\bold{Q},G) := \min \left\{ \sum_{i = 1}^n \sum_{j = 1}^n \sum_{k = 1}^n q_{ijk}x_{ij}x_{jk} \, : \, \, \bold{X} \in \mathcal{T}_n(G) \right\}.
\end{align}
Since the in- and outdegree of each node on a Hamiltonian cycle is exactly one, we have $\bold{X}\mathbf{1} = \mathbf{1}$ and $\bold{X}^\top \mathbf{1} = \mathbf{1}$ for all $\bold{X} \in \mathcal{T}_n(G)$. The set of square binary matrices that satisfy this property is known as the set of permutation matrices $\Pi_n$, i.e.,
$\Pi_n := \left \{\bold{X} \in \{0,1\}^{n \times n} \, : \, \, \bold{X}\bold{1} = \bold{1}, \, \bold{X}^\top\bold{1} = \bold{1} \right\}.$
The permutation matrices that additionally satisfy $\diag(\bold{X}) = \mathbf{0}_n$ induce a disjoint cycle cover in $K_n$. 

Similar to the definition of $\mathcal{T}_n(G)$, we can also restrict $\Pi_n$ to the entries induced by $G$. That is, $\Pi_n(G)$ has a zero on position $(i,j)$ whenever $(i,j) \notin A$.

\subsection{ISDP based on algebraic connectivity in directed graphs}
\label{Section:SDPforQTSP}
Cvetkovi\'c et al.~\cite{Cvetkovic} derive an ISDP formulation of the symmetric linear \textsc{TSP} based on algebraic connectivity. We now exploit the equivalent of this notion for directed graphs to derive two ISDP formulations of the \textsc{QTSP}. Different from our approach, there was no attempt in \cite{Cvetkovic} to solve the ISDP itself, only its SDP relaxation.

Let $\bold{D_G}$ be an $n \times n$ diagonal matrix that contains the outdegrees of the nodes of $G$ on the diagonal. Moreover, let $\bold{A_G}$ denote the adjacency matrix of $G$.
That is, $(A_G)_{ij} = 1$ if there exists an arc from $i$ to $j$ in $G$, and $(A_G)_{ij} = 0$ otherwise. We define the directed out-degree Laplacian matrix of $G$ as $\bold{L_G} := \bold{D_G} - \bold{A_G}$. The matrix $\bold{L_G}$ can be asymmetric and has a zero eigenvalue with corresponding eigenvector $\mathbf{1}_n$.
Observe that there exist also other ways for defining the directed graph Laplacian of $G$, see e.g.,~\cite{CaughmanVeerman}.
Wu~\cite{Wu} generalized Fiedler's notion of algebraic connectivity of an undirected graph~\cite{Fiedler} to directed graphs, by exploiting the out-degree Laplacian matrix.
\begin{definition} \label{def:algconnect}
The algebraic connectivity of a directed graph $G$ is given by
\begin{align*}
a(G) := \min_{\bold{x} \in S} \bold{x}^\top \bold{L_G} \bold{x} = \min_{\substack{\bold{x} \in \mathbb{R}^n \\ \bold{x} \neq \mathbf{0}, \bold{x} \perp \mathbf{1}_n}} \frac{\bold{x}^\top \bold{L_G} \bold{x}}{\bold{x}^\top \bold{x}} = \lambda_{\min} \left(\frac{1}{2} \bold{W}^\top \left( \bold{L_G} + \bold{L_G}^\top \right) \bold{W} \right),
\end{align*}
where
$S := \left\{\bold{x} \in \mathbb{R}^n \, : \, \, \bold{x} \perp \bold{1}_n \, , \, \, \Vert \bold{x} \Vert_2 = 1 \right\}$
and
$\bold{W} \in \mathbb{R}^{n \times (n-1)}$ is a matrix whose columns form an orthonormal basis for $\mathbf{1}_n^\perp$.
\end{definition}
The last equality in Definition~\ref{def:algconnect} follows from the Courant-Fischer theorem.
Observe that $a(G)$ is not necessarily equal to the second smallest eigenvalue of the directed Laplacian matrix, which is the definition of its undirected counterpart. The algebraic connectivity $a(G)$ as defined in Definition~\ref{def:algconnect} is a real number that can be negative.

A directed graph is called {\em balanced} if for each node its indegree is equal to its outdegree. Let $\bold{B} \in \{-1,0,1\}^{n \times m}$ be the signed incidence matrix of $G$, i.e., $B_{i,e} = -1$ if arc  leaves node $i$, $B_{i,e} = 1$ if $e$ enters node $i$ and $B_{i,e}=0$ otherwise.
One can verify that $G$ is balanced if and only if $\bold{L_G} + \bold{L_G}^\top = \bold{BB}^\top$. This implies that for balanced graphs the matrix $\frac{1}{2}(\bold{L_G} + \bold{L_G}^\top)$ is positive semidefinite.
Wu~\cite{Wu} observes that if $G$ is balanced, then
$a(G) = \lambda_2 ( ( \bold{L_G} + \bold{L_G}^\top )/2) \geq 0.$
A directed graph is called {\em strongly connected} if for every pair of distinct nodes $u, v \in N$ there exists a directed path from $u$ to $v$ in $G$. The balanced graphs that are strongly connected are characterized by their algebraic connectivity, see Proposition~\ref{prop:balanced} below. Connectedness of directed graphs is also studied in \cite{CaughmanVeerman, VeermanLyons}.
\begin{proposition}[Wu \cite{Wu}] \label{prop:balanced}
Let a directed graph $G$ be balanced. Then, $a(G) >0$ if and only if $G$ is strongly connected.
\end{proposition}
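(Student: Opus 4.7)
The plan is to reduce the statement to the classical undirected Laplacian result that the second-smallest eigenvalue of an undirected Laplacian is positive if and only if the underlying graph is connected (Fiedler's theorem). The bridge is the symmetric part $\frac{1}{2}(\bold{L_G} + \bold{L_G}^\top)$. By hypothesis $G$ is balanced, so the excerpt already gives $\bold{L_G} + \bold{L_G}^\top = \bold{B}\bold{B}^\top$ and $a(G) = \lambda_2\!\left(\frac{1}{2}(\bold{L_G} + \bold{L_G}^\top)\right)$. The matrix $\bold{B}\bold{B}^\top$ is precisely the Laplacian of the underlying undirected multigraph $\tilde G$ obtained from $G$ by forgetting arc orientations (with multiplicity $2$ on a pair $\{i,j\}$ whenever both $(i,j)$ and $(j,i)$ occur in $A$). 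Thus Fiedler's theorem converts the entire proposition into the combinatorial statement: under the balance assumption, $\tilde G$ is connected if and only if $G$ is strongly connected.

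Proving this combinatorial equivalence is the core work. The forward direction is trivial, since every directed walk in $G$ is an undirected walk in $\tilde G$. For the converse I would argue by contrapositive. Suppose $G$ is balanced but not strongly connected. Pick any node $u$ and let $S \subseteq N$ be the set of nodes reachable from $u$ via a directed walk in $G$; by maximality no arc of $G$ leaves $S$, and non-strong-connectivity gives $S \subsetneq N$. Summing the balance identity $d^{\mathrm{in}}(v) = d^{\mathrm{out}}(v)$ over $v \in S$, the contributions of arcs lying entirely inside $S$ cancel on both sides, leaving
\begin{equation*}
\bigl| \{(i,j) \in A : i \notin S, \; j \in S\} \bigr| \;=\; \bigl| \{(i,j) \in A : i \in S, \; j \notin S\} \bigr| \;=\; 0.
\end{equation*}
Hence no arc of $G$ connects $S$ with $N \setminus S$ in either direction, so $\tilde G$ is disconnected, establishing the contrapositive.

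The main obstacle, or rather the only delicate point, is using balance in its \emph{global} cut form rather than its per-node form: the argument requires that zero outgoing arcs from $S$ forces zero incoming arcs to $S$, which only works after summing the local identity over $S$. The remaining ingredients (reading off $\bold{B}\bold{B}^\top$ as the multigraph Laplacian of $\tilde G$, and invoking Fiedler) are routine. No new tools beyond Fiedler's theorem and the spectral identity already recorded in the excerpt are needed.
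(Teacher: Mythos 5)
The paper offers no proof of this proposition: it is quoted directly from Wu \cite{Wu}, so there is nothing internal to compare against. Your argument is correct and essentially self-contained given the two facts the paper records immediately before the statement, namely that $\bold{L_G}+\bold{L_G}^\top=\bold{B}\bold{B}^\top$ for balanced $G$ and that $a(G)=\lambda_2\bigl(\tfrac{1}{2}(\bold{L_G}+\bold{L_G}^\top)\bigr)$. Identifying $\bold{B}\bold{B}^\top$ with the Laplacian of the underlying undirected multigraph $\tilde G$ and invoking the kernel-dimension form of Fiedler's theorem (valid for multigraphs with nonnegative edge weights) reduces the proposition to your combinatorial claim, and the cut argument --- summing $d^{\mathrm{in}}(v)=d^{\mathrm{out}}(v)$ over $S$ so that the internal arcs cancel and a reachability-closed proper subset is forced to have no incoming arcs either --- is exactly the right use of balance. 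One phrasing to tighten: ``pick any node $u$'' does not by itself guarantee $S\subsetneq N$, since a non-strongly-connected digraph can still have a node from which everything is reachable; you should instead pick $u$ to be a node from which some other node is unreachable, which exists precisely because $G$ is not strongly connected. (Your own cut identity shows a posteriori that for a balanced, non-strongly-connected $G$ \emph{every} reachable set is proper, but that requires the extra half-sentence if you insist on an arbitrary $u$.) With that one-line fix the proof is complete.
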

This characterization can be exploited to derive a certificate for a tour matrix via a linear matrix inequality. In order to do so, we consider the spectrum of a Hamiltonian cycle. Let $\mathcal{C}$ be a Hamiltonian cycle in $G$ corresponding to the tour matrix $\bold{X} \in \mathcal{T}_n(G)$, see~\eqref{def:Tau}. We then have $\frac{1}{2}\left( \bold{L_{\mathcal{C}}} + \bold{L_{\mathcal{C}}}^\top \right) = \bold{I_n} - \frac{1}{2}(\bold{X} + \bold{X}^\top )$. The matrix $\bold{X} + \bold{X}^\top$ with $\bold{X} \in \mathcal{T}_n(G)$ has the same spectrum as the adjacency matrix of the standard undirected $n$-cycle.
As a result, the spectrum of $\frac{1}{2}(\bold{X} + \bold{X}^\top)$ is given by $\cos \left( \frac{2 \pi j}{n}\right)$ for $j \in [n]$
see e.g.,~\cite{Cvetkovic}. From this, it follows that the spectrum of $\frac{1}{2}\left( \bold{L_\mathcal{C}} + \bold{L_\mathcal{C}}^\top \right)$ is given by
$1 -  \cos \left( {2 \pi j}/{n}\right)  \text{for } j \in [n],$
and the algebraic connectivity of a directed Hamiltonian cycle $\mathcal{C}$ is $a(\mathcal{C}) = 1 - \cos (2 \pi / n)$. We define:
\begin{align}\label{knhn}
k_n :=  \cos \left(\frac{2\pi}{n}\right) \quad \text{and} \quad h_n := 1 - k_n.
\end{align}
Next, we extend a result by Cvetkovi\'c et al.~\cite{Cvetkovic} from undirected to directed Hamiltonian cycles.
\begin{theorem} \label{Thm:CvetkovicDirected}
Let $H$ be a spanning subgraph of a directed graph $G$ where the in- and outdegree equals one for all nodes in $H$. Let $\bold{X}$ be its adjacency matrix and let $\alpha, \beta \in \mathbb{R}$ be such that $\alpha \geq h_n / n$ and $k_n \leq \beta < 1$, with $k_n, h_n$ as defined in \eqref{knhn}. Then, $H$ is a directed Hamiltonian cycle if and only if
$$\bold{Z} := \beta \bold{I_n} + \alpha \bold{J_n} - \frac{1}{2}\left( \bold{X} + \bold{X}^\top \right) \succeq \mathbf{0}.$$
\end{theorem}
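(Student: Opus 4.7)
The plan is to diagonalize $\bold{Z}$ by exploiting that $\bold{J_n}$ and $\frac{1}{2}(\bold{X}+\bold{X}^\top)$ share $\mathbf{1}_n$ as an eigenvector, and then to split the spectrum into a constant direction and its orthogonal complement. First I would observe that since every node of $H$ has in- and outdegree equal to one, $\bold{X}$ is a permutation matrix whose cycle decomposition induces a vertex-disjoint partition of $[n]$ into directed cycles $C_1,\ldots,C_k$ with vertex sets $V_1,\ldots,V_k$ of sizes $n_1,\ldots,n_k \geq 2$ (simplicity rules out loops). After relabeling, $\bold{X}$ is block diagonal with the $i$-th block being the adjacency matrix of $C_i$, and the $i$-th block of $\frac{1}{2}(\bold{X}+\bold{X}^\top)$ has spectrum $\{\cos(2\pi j/n_i) : j = 0,\ldots,n_i-1\}$ with $\bold{\mathbbm{1}_{V_i}}$ spanning its one-dimensional eigenspace at eigenvalue $1$. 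In particular $\frac{1}{2}(\bold{X}+\bold{X}^\top)\mathbf{1}_n = \mathbf{1}_n$, so this symmetric matrix commutes with $\bold{J_n} = \mathbf{1}_n\mathbf{1}_n^\top$; consequently $\bold{Z}$ admits a common orthonormal eigenbasis with both summands.

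For the ``only if'' direction, assuming $k=1$, I would split $\mathbb{R}^n = \Span(\mathbf{1}_n) \oplus \mathbf{1}_n^\perp$. On $\Span(\mathbf{1}_n)$, $\bold{Z}$ acts by the scalar $\beta + \alpha n - 1$, which is nonnegative because $\alpha \geq h_n/n = (1-k_n)/n \geq (1-\beta)/n$ using $\beta \geq k_n$. On $\mathbf{1}_n^\perp$, $\bold{J_n}$ vanishes and the remaining eigenvalues of $\frac{1}{2}(\bold{X}+\bold{X}^\top)$ lie in $[\cos(\pi),k_n]$, since the maximum of $\cos(2\pi j/n)$ over $j\in\{1,\ldots,n-1\}$ is attained at $j=1,n-1$ and equals $k_n$. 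Thus $\bold{Z}$ acts with eigenvalues at least $\beta - k_n \geq 0$ on $\mathbf{1}_n^\perp$, and the two contributions combine to give $\bold{Z}\succeq \bold{0}$.

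For the ``if'' direction I would argue contrapositively: if $k\geq 2$, take
\begin{align*}
\bold{v} := n_2 \bold{\mathbbm{1}_{V_1}} - n_1 \bold{\mathbbm{1}_{V_2}} \neq \bold{0},
\end{align*}
which satisfies $\mathbf{1}_n^\top \bold{v} = n_1 n_2 - n_1 n_2 = 0$, so $\bold{J_n}\bold{v} = \bold{0}$. By the setup paragraph each $\bold{\mathbbm{1}_{V_i}}$ is a $1$-eigenvector of $\frac{1}{2}(\bold{X}+\bold{X}^\top)$, hence $\frac{1}{2}(\bold{X}+\bold{X}^\top)\bold{v} = \bold{v}$. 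Therefore $\bold{Z}\bold{v} = (\beta - 1)\bold{v}$, and $\beta < 1$ forces this eigenvalue to be strictly negative, contradicting $\bold{Z}\succeq\bold{0}$. Hence $\bold{Z}\succeq \bold{0}$ implies $k=1$, i.e., $H$ is a Hamiltonian cycle.

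The crux of the argument is the commutation of $\bold{J_n}$ with $\frac{1}{2}(\bold{X}+\bold{X}^\top)$, which is automatic from $\bold{X}\mathbf{1}_n=\bold{X}^\top\mathbf{1}_n=\mathbf{1}_n$. After that, everything reduces to checking three scalar inequalities that are made tight by the three hypotheses: $\alpha \geq h_n/n$ handles the $\mathbf{1}_n$-direction, $\beta \geq k_n$ handles the non-trivial eigenvalues inside a single Hamiltonian cycle, and $\beta < 1$ is precisely what the indicator-vector gadget $\bold{v}$ exploits to detect a decomposition into two or more cycles. I do not anticipate any genuine obstacle beyond bookkeeping in the spectral computation.
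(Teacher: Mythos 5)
Your proof is correct, and its ``if'' direction takes a genuinely different route from the paper's. The paper conjugates $\bold{Z}$ by an orthonormal basis $\bold{W}$ of $\mathbf{1}_n^\perp$, identifies $\lambda_{\min}\bigl(\tfrac{1}{2}\bold{W}^\top(\bold{L_H}+\bold{L_H}^\top)\bold{W}\bigr)$ with the algebraic connectivity $a(H)$, and then invokes Wu's characterization (Proposition~\ref{prop:balanced}) that a balanced digraph with $a(H)>0$ is strongly connected, from which Hamiltonicity follows since $H$ is a degree-one cycle cover. You bypass the algebraic-connectivity machinery entirely: from the cycle decomposition of the permutation matrix $\bold{X}$ you exhibit, when there are $k\geq 2$ cycles, the explicit vector $\bold{v} = n_2\bold{\mathbbm{1}_{V_1}} - n_1\bold{\mathbbm{1}_{V_2}} \perp \mathbf{1}_n$ satisfying $\bold{Z}\bold{v} = (\beta-1)\bold{v}$ with $\beta-1<0$, a direct negative-eigenvalue certificate. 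This is more elementary and self-contained (no appeal to \cite{Wu}), and your witness is, up to scaling, exactly the eigenvector the paper later constructs in Proposition~\ref{Prop:GomoryCut} to generate CG cuts, so your argument unifies the feasibility proof with the separation routine. What the paper's route buys is a template phrased purely in terms of $a(\cdot)$ that is reused verbatim for Theorem~\ref{Thm:DistanceTwoSDP}, where the relevant graph $\tilde{H}$ is no longer a plain cycle cover (though your indicator-vector argument would also adapt there, since each $\bold{\mathbbm{1}_{V_i}}$ remains an eigenvector of $\bold{X}+\bold{X^{(2)}}$). Your ``only if'' direction --- block-diagonalizing $\bold{Z}$ over $\Span(\mathbf{1}_n)\oplus\mathbf{1}_n^\perp$ and checking $\beta+\alpha n-1\geq 0$ and $\beta-k_n\geq 0$ --- is essentially the same computation as the paper's decomposition $\bold{x}=\bold{W}\bold{y}+\delta\mathbf{1}_n$, just organized as a common-eigenbasis argument.
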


\begin{proof}
Let $\bold{L_H}$ be the Laplacian matrix of $H$ and let $\bold{W}$ be as given in Definition~\ref{def:algconnect}. Then $a(H) = \lambda_{\min} \left( \frac{1}{2} \bold{W}^\top \left(\bold{L_H} + \bold{L_H}^\top \right) \bold{W} \right)$.
Let $\bold{Z} \succeq \mathbf{0}$. This implies that $\bold{W}^\top \bold{ZW} \succeq \mathbf{0}$, i.e.,
\begin{align*}
\bold{W}^\top \bold{Z W} & = \bold{W}^\top \left( \beta \bold{I_n} + \alpha \bold{J_n} - \frac{1}{2}\left( \bold{X} + \bold{X}^\top \right) \right) \bold{W} 
 = \beta \bold{W}^\top \bold{W} + \alpha \bold{W}^\top \bold{J_n} \bold{W} - \frac{1}{2} \bold{W}^\top \left( \bold{X} + \bold{X}^\top \right) \bold{W} \\
& = \beta \bold{I_{n-1}} - \frac{1}{2} \bold{W}^\top \left( \bold{X} + \bold{X}^\top \right) \bold{W} 
 = (\beta - 1)\bold{I_{n-1}} + \frac{1}{2}\bold{W}^\top \left( \bold{L_H} + \bold{L_H}^\top \right) \bold{W} \succeq \mathbf{0},
\end{align*}
where we used the fact that $\bold{J_n W} = \mathbf{0}$ and $\frac{1}{2}(\bold{L_H} + \bold{L_H}^\top) = \bold{I_n} - \frac{1}{2}(\bold{X} + \bold{X}^\top )$. The linear matrix inequality above can be rewritten as
\begin{align*}
\frac{1}{2} \bold{W}^\top \left(\bold{L_H} + \bold{L_H}^\top \right) \bold{W} \succeq (1 - \beta)\bold{I_{n-1}} \quad \Longrightarrow \quad a(H) = \lambda_{\min} \left( \frac{1}{2} \bold{W}^\top \left(\bold{L_H} + \bold{L_H}^\top \right) \bold{W} \right) \geq 1 - \beta.
\end{align*}
Since $\beta < 1$, we have $\alpha(H) > 0$. Because $H$ is balanced, it follows from Proposition~\ref{prop:balanced} that $H$ is strongly connected and, thus, $H$ is a directed Hamiltonian cycle.

Conversely, let $H$ be a directed Hamiltonian cycle. Then, $a(H) = \lambda_{\min} \left( \frac{1}{2} \bold{W}^\top \left(\bold{L_H} + \bold{L_H}^\top \right) \bold{W} \right) = 1 - k_n$. Since $\beta \geq k_n$, we have
$
\frac{1}{2} \bold{W}^\top \left(\bold{L_H} + \bold{L_H}^\top \right) \bold{W} - (1 - \beta)\bold{I_{n-1}} \succeq \mathbf{0} \Longleftrightarrow \bold{W}^\top \bold{Z W} \succeq \mathbf{0},
$
following the same derivation as above. Now, let $\bold{x} \in \mathbb{R}^n$. Since the columns of $\bold{W}$ form a basis for $\mathbf{1}_n^\perp$, $\bold{x}$ can be written as $\bold{x} = \bold{Wy} + \delta \mathbf{1}_n$ for some $\bold{y} \in \mathbb{R}^{n-1}$ and $\delta \in \mathbb{R}$. This yields:
\begin{align*}
\bold{x}^\top \bold{Z x} & = \bold{y}^\top \bold{W}^\top \bold{Z W y} + 2 \delta \bold{y}^\top \bold{W}^\top \bold{Z} \mathbf{1}_n + \delta^2  \mathbf{1}_n^\top \bold{Z} \mathbf{1}_n \\
& =  \underbrace{\bold{y}^\top \bold{W}^\top \bold{Z W} \bold{y}}_{\geq 0} + \underbrace{2 \delta \bold{y}^\top \bold{W}^\top \left( (\beta-1) \mathbf{1}_n + \alpha n \mathbf{1}_n \right)}_{= 0} + \underbrace{\delta^2 n  \left( (\beta-1) + \alpha n \right)}_{\geq 0},
\end{align*}
where we used the facts that $\bold{W}^\top \bold{Z W} \succeq \mathbf{0}, \bold{W}^\top \mathbf{1}_n = \mathbf{0}$ and $\beta -1 + \alpha n \geq k_n -1 + n\frac{1 - k_n}{n} = 0$. Thus, $\bold{Z} \succeq \mathbf{0}$.
\end{proof}

In order to present our first ISDP formulation of the \textsc{QTSP}, we derive an explicit expression for the set $\mathcal{T}_n(G)$ and linearize the objective function. The former can be done using Theorem~\ref{Thm:CvetkovicDirected}. The set $\mathcal{T}_n(G)$ can be fully characterized by the permutation matrices that satisfy a linear matrix inequality. That is,
\begin{align} \label{T_n:SDP}
\mathcal{T}_n(G) = \Pi_n(G) \cap \left\{ \bold{X} \in \mathcal{S}^n \, : \, \, \beta \bold{I_n} + \alpha \bold{J_n} - \frac{1}{2}(\bold{X} + \bold{X}^\top ) \succeq \mathbf{0}  \right\},
\end{align}
for all $\alpha \geq h_n / n$ and $k_n \leq \beta < 1$. Recall that $\Pi_n(G)$ is the set of permutation matrices implied by $G$, see Section~\ref{Section:DefQTSP}.

To linearize the objective function, we follow the same construction as proposed by Fischer et al.~\cite{AandFFischer}. For all two-arcs $(i,j,k) \in \mathcal{A}$, see \eqref{def:twoarcs}, we define a variable $y_{ijk} := x_{ij}x_{jk}$. This equality can be guaranteed by the introduction of the following set of linear coupling constraints:
\begin{align*}
x_{ij} = \sum_{\substack{k \in N : \\ (k,i,j) \in \mathcal{A}}} y_{kij} = \sum_{\substack{k \in N : \\ (i,j,k) \in \mathcal{A}}} y_{ijk} \text{ for all } (i,j) \in A \quad \text{and} \quad y_{ijk} \geq 0  \text{ for all } (i,j,k) \in \mathcal{A}.
\end{align*}

We define the following set:
\begin{align}\label{setF1}
\mathcal{F}_1 := \left \{ (\bold{y}, \bold{X}) \in \{0,1\}^{\mathcal{A}} \times \Pi_n(G) \, : \, \,
 x_{ij} = \sum_{\substack{k \in N : \\ (k,i,j) \in \mathcal{A}}} y_{kij} = \sum_{\substack{k \in N : \\ (i,j,k) \in \mathcal{A}}} y_{ijk} \quad \forall (i,j) \in A  \right\}.
\end{align}
Now, our first ISDP formulation of the \textsc{QTSP} is as follows:
\begin{align*}  
\tag{$ISDP_{1}$} \label{eq:ISDP1}
\left\{
\begin{aligned}
\min\quad & \sum_{(i,j,k) \in \mathcal{A}} q_{ijk}y_{ijk} \\
\text{s.t.} \quad &   \beta \bold{I_n} + \alpha \bold{J_n} - \frac{1}{2} \left( \bold{X} + \bold{X}^\top \right) \succeq \mathbf{0}, 
\,\, (\bold{y},\bold{X}) \in \mathcal{F}_1,
\end{aligned} \right.
\end{align*}
where $\alpha \geq h_n /n$ and $k_n \leq \beta < 1$. One can verify that setting $\alpha = h_n /n$ and $\beta = k_n$ leads to the strongest linear matrix inequality among all possible values for $\alpha$ and $\beta$. Thus, we use these values in the computational results of Section~\ref{Section:ComputationResults}.

\begin{remark} \label{Remark:integrality}
In fact, we do not need to enforce integrality on $\bold{y}$ explicitly. Namely, if $\bold{X} \in \mathcal{T}_n(G)$, it follows from the integrality of $\bold{X}$ and the coupling constraints that $y_{ijk} = 1$ if $(i,j,k) \in \mathcal{A}$ is used in the tour and 0 otherwise. Hence, when optimizing over $\mathcal{F}_1$ using a B\&B or B\&C algorithm, we relax the integrality constraint on $\bold{y}$ and branch on $\bold{X}$ only. 
\end{remark}

In what follows, we further exploit properties of tour matrices to derive our second ISDP formulation of the \textsc{QTSP}.
Let $\bold{X} \in \mathcal{T}_n(G)$ be a tour matrix and define $\bold{X^{(2)}} = (x^{(2)}_{ij}) := \bold{X}\cdot \bold{X}$.
For $i,k \in N$ we have  $x^{(2)}_{ik}= \sum_{j=1}^n x_{ij}x_{jk} = \sum_{j \in N: (i,j,k) \in \mathcal{A}} y_{ijk}$,
where the last equality follows from the definition of $\bold{y}$.
Thus, $\bold{X^{(2)}}$ is a binary matrix and $x^{(2)}_{ik} =1$ if and only if the length of the shortest directed path from $i$ to $k$ in the subgraph induced by $\bold{X}$ is equal to two.

We can again characterize a tour matrix as in Theorem~\ref{Thm:CvetkovicDirected} by combining the variables $\bold{X}$ and $\bold{X^{(2)}}$. Observe that the directed graph induced by $\bold{X^{(2)}}$ is  balanced with in- and outdegree one, and circulant (but not strongly connected for even $n$). Moreover, the circulant graph $\mathcal{C}_2$ corresponding to $\bold{X}+\bold{X^{(2)}}$  is strongly connected  and balanced with  in- and outdegree two.
The spectrum of $ \frac{1}{2}( (\bold{X}+\bold{X^{(2)}}) +(\bold{X}+\bold{X^{(2)}} )^\top )$ for any $\bold{X} \in \mathcal{T}_n(G)$ and $\bold{X^{(2)}} = \bold{X} \cdot \bold{X}$ is given by
\begin{align} \label{specCirc}
\cos \left( \frac{2 \pi j}{n}\right) +  \cos \left( \frac{4 \pi j}{n}\right) \quad \text{for } j \in [n],
\end{align}
which results in the algebraic connectivity of $\mathcal{C}_2$ being $a(\mathcal{C}_2) = 2 - ( \cos (2 \pi / n)+\cos (4 \pi / n))$. We define
\begin{align}\label{knhntilde}
k_n^{(2)} :=  \cos \left(\frac{2\pi}{n}\right) + \cos \left(\frac{4\pi}{n}\right) \quad \text{and} \quad
h_n^{(2)} := 2 - k_n^{(2)}.
\end{align}
Now, we are ready to state the following theorem.

\begin{theorem} \label{Thm:DistanceTwoSDP}
Let $H$ be a spanning subgraph of a directed graph $G$ where the in- and outdegree equals one for all nodes in $H$. Let $\bold{X}$ be its adjacency matrix and let $\bold{X^{(2)}} := \bold{X}\cdot \bold{X}$ be the distance two adjacency matrix. Let $\alpha^{(2)}, \beta^{(2)} \in \mathbb{R}$ be such that $\alpha^{(2)} \geq h_n^{(2)} / n$ and $k_n^{(2)} \leq \beta^{(2)} < 2$,
with  $k_n^{(2)}$, $h_n^{(2)}$ as defined in \eqref{knhntilde}. Then $H$ is a directed Hamiltonian cycle if and only if
$$\bold{Z} := \beta^{(2)} \bold{I_n} + \alpha^{(2)} \bold{J_n} - \frac{1}{2}\left( (\bold{X} + \bold{X^{(2)}}) +(\bold{X} + \bold{X^{(2)}} )^\top  \right) \succeq \mathbf{0}.$$
\end{theorem}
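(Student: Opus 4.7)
The plan is to adapt the proof of Theorem~\ref{Thm:CvetkovicDirected} almost verbatim, replacing $\bold{X}$ by the combined adjacency matrix $\bold{X} + \bold{X^{(2)}}$ and exploiting the spectrum \eqref{specCirc}. Let $H'$ denote the auxiliary directed graph on $n$ nodes whose adjacency matrix is $\bold{X} + \bold{X^{(2)}}$. Since $\bold{X}$ has in- and outdegree one at every node, so does $\bold{X^{(2)}} = \bold{X}\cdot\bold{X}$ (which is again a permutation matrix up to rearrangement within each cycle of $H$), and hence $H'$ is balanced with in- and outdegree two. In particular $\frac{1}{2}(\bold{L_{H'}} + \bold{L_{H'}}^\top) = 2\bold{I_n} - \frac{1}{2}\bigl((\bold{X} + \bold{X^{(2)}}) + (\bold{X} + \bold{X^{(2)}})^\top\bigr)$, so $\bold{Z}$ relates to the symmetrized Laplacian of $H'$ exactly as in the previous theorem.

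For the forward direction, suppose $\bold{Z} \succeq \mathbf{0}$. Using $\bold{W}^\top \bold{W} = \bold{I_{n-1}}$ and $\bold{W}^\top \bold{J_n} \bold{W} = \mathbf{0}$, the same chain of manipulations as in Theorem~\ref{Thm:CvetkovicDirected} yields $\frac{1}{2}\bold{W}^\top(\bold{L_{H'}}+\bold{L_{H'}}^\top)\bold{W} \succeq (2-\beta^{(2)})\bold{I_{n-1}}$, so $a(H') \geq 2 - \beta^{(2)} > 0$. Proposition~\ref{prop:balanced} then forces $H'$ to be strongly connected. The step I expect to require the most care is now concluding that $H$ itself is a Hamiltonian cycle. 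Since $H$ is a vertex-disjoint union of directed cycles, and any arc of $\bold{X^{(2)}}$ goes between two nodes lying on the same cycle of $H$, the strongly connected components of $H'$ cannot merge distinct cycles of $H$. Therefore $H'$ being strongly connected forces $H$ to consist of a single cycle, i.e., to be a Hamiltonian cycle.

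For the converse, suppose $H$ is a Hamiltonian cycle. Then $H'$ is the $2$-step circulant $\mathcal{C}_2$, and \eqref{specCirc} gives $a(\mathcal{C}_2) = 2 - k_n^{(2)}$. Since $\beta^{(2)} \geq k_n^{(2)}$, the same algebra as above (run in reverse) gives $\bold{W}^\top \bold{Z W} \succeq \mathbf{0}$. To promote this to $\bold{Z}\succeq \mathbf{0}$, I would decompose an arbitrary $\bold{x}\in\mathbb{R}^n$ as $\bold{x} = \bold{Wy} + \delta \mathbf{1}_n$ and expand $\bold{x}^\top \bold{Zx}$ as in the end of the proof of Theorem~\ref{Thm:CvetkovicDirected}. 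The cross term vanishes because $\bold{W}^\top \mathbf{1}_n = \mathbf{0}$, the quadratic term in $\bold{y}$ is nonnegative by what we just established, and the $\delta^2$ term equals $\delta^2 n \bigl(\beta^{(2)} - 2 + \alpha^{(2)} n\bigr)$, which is nonnegative because $\alpha^{(2)} n \geq h_n^{(2)} = 2 - k_n^{(2)} \geq 2 - \beta^{(2)}$.

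The main obstacle, as noted, is the strong-connectivity-to-Hamiltonicity step: the algebraic connectivity machinery only certifies strong connectivity of $H'$, and one must use the specific combinatorial structure of $H$ (a disjoint union of cycles) together with the fact that $\bold{X^{(2)}}$-arcs stay within these cycles to rule out $H$ being anything other than a single $n$-cycle. Everything else is a mechanical transcription of the previous proof with the parameters $(k_n,h_n)$ replaced by $(k_n^{(2)}, h_n^{(2)})$ and the constant $1$ replaced by $2$.
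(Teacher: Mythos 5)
Your proposal is correct and follows essentially the same route as the paper: both reduce to the symmetrized Laplacian of the auxiliary graph with adjacency matrix $\bold{X}+\bold{X^{(2)}}$, invoke Proposition~\ref{prop:balanced} to get strong connectivity from $a(\cdot)\geq 2-\beta^{(2)}>0$, and reuse the decomposition $\bold{x}=\bold{Wy}+\delta\mathbf{1}_n$ for the converse. Your explicit justification of the key combinatorial step (arcs of $\bold{X^{(2)}}$ stay within the cycles of $H$, so strong connectivity of the auxiliary graph forces $H$ to be a single cycle) is a slightly more detailed version of the paper's one-line remark, but the argument is the same.
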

\begin{proof}
Let $\tilde{H}$ be the subgraph of $G$ that has adjacency matrix $\bold{X} +  \bold{X^{(2)}}$. Observe that $\tilde{H}$ is balanced, and thus, $\tilde{H}$ is strongly connected if and only if $a(\tilde{H}) > 0$.

Let $\bold{Z} \succeq \mathbf{0}$, which implies that $\bold{W}^\top \bold{Z W} \succeq \mathbf{0}$. Now we can use a similar derivation as in the proof of Theorem~\ref{Thm:CvetkovicDirected}, which results in the following:
\begin{align*}
\frac{1}{2} \bold{W}^\top \left( \bold{L_{\tilde{H}}} + \bold{L_{\tilde{H}}}^\top \right) \bold{W} \succeq \left( 2 - \beta^{(2)} \right) \bold{I_{n-1}} \,\, \Longrightarrow \,\, a(\tilde{H}) = \lambda_{\min} \left( \frac{1}{2} \bold{W}^\top \left( \bold{L_{\tilde{H}}} + \bold{L_{\tilde{H}}}^\top \right) \bold{W} \right) \geq 2 - \beta^{(2)}.
\end{align*}
Since $\beta^{(2)} < 2$, we have $a(\tilde{H}) > 0$, and thus, $\tilde{H}$ is strongly connected. As $\tilde{H}$ is the union of a directed cycle cover and its implied distance two graph, $\tilde{H}$ can only be strongly connected if $H$ is strongly connected. We conclude that $H$ is a Hamiltonian cycle.

Conversely, let $H$ be a Hamiltonian cycle. In that case, the algebraic connectivity of $\tilde{H}$ is $a(\tilde{H}) = 2 - k_n^{(2)}$, i.e., $\lambda_{\min} \left( \frac{1}{2} \bold{W}^\top \left( \bold{L_{\tilde{H}}} + \bold{L_{\tilde{H}}}^\top \right) \bold{W} \right) = 2 - k_n^{(2)}$. Since $\beta^{(2)}  \geq k_n^{(2)}$, this yields
$$
\frac{1}{2} \bold{W}^\top \left( \bold{L_{\tilde{H}}} + \bold{L_{\tilde{H}}}^\top \right) \bold{W} - \left( 2 - \beta^{(2)} \right)\bold{I_{n-1}} \succeq \mathbf{0} \quad \Longleftrightarrow \quad \bold{W}^\top \bold{Z W} \succeq \mathbf{0}.
$$
Now we can use the same argument as in the proof of Theorem~\ref{Thm:CvetkovicDirected} to show that $\bold{Z} \succeq 0$ where $\beta, \alpha$ and $k_n$ are replaced by $\beta^{(2)}, \alpha^{(2)} $ and $k_n^{(2)}$, respectively.
\end{proof}
We define the set $\mathcal{F}_2$ as follows:
\begin{align}\label{setF2}
\mathcal{F}_2 := \left \{ \left(\bold{y}, \bold{X}, \bold{X^{(2)}} \right) \in \mathcal{F}_1  \times \Pi_n(G^2) \, : \, \,
x^{(2)}_{ik} = \sum_{ \substack{j \in N : \\ (i,j,k) \in \mathcal{A}} }^n y_{ijk} \,\, \forall (i,k) \in A^2
\right \},
\end{align}
where 
\[
\Pi_n(G^2):=\left \{ \bold{X^{(2)}} \in  \{0,1\}^{n \times n} \,  : \, \,
\bold{X^{(2)}} \bold{1} = \bold{1}, \, (\bold{X^{(2)}})^\top \bold{1} = \bold{1}, \, \diag(\bold{X^{(2)}}) = \bold{0}, \,\, x^{(2)}_{ij} = 0 \,\, \forall (i,j) \notin A^2 \right \},
\]
and $A^2$ is the set of node pairs $(i,j)$ for which there exists a directed path from $i$ to $j$ of length $2$. The set $\mathcal{F}_2$ and the result of Theorem~\ref{Thm:DistanceTwoSDP} lead to our second ISDP formulation of the \textsc{QTSP}:
\begin{align*} 
\tag{$ISDP_{2}$} \label{eq:ISDP2}
\left\{
\begin{aligned}
\min\quad & \sum_{(i,j,k) \in \mathcal{A}} Q_{ijk}y_{ijk} \\
\text{s.t.} \quad &   \beta \bold{I_n} + \alpha \bold{J_n} - \frac{1}{2} \left( \bold{X} + \bold{X}^\top \right) \succeq \mathbf{0}  \\
& \, \,  \beta^{(2)} \bold{I_n} + \alpha^{(2)} \bold{J_n} - \frac{1}{2}\left( (\bold{X} + \bold{X^{(2)}} ) +(\bold{X} + \bold{X^{(2)}}  )^\top  \right) \succeq \mathbf{0}, 
\, \, (\bold{y},\bold{X}, \bold{X^{(2)}}) \in \mathcal{F}_2,
\end{aligned} \right.
\end{align*}
where $\alpha \geq h_n /n$, $k_n \leq \beta < 1$,  $\alpha^{(2)} \geq h_n^{(2)} / n$ and $k_n^{(2)} \leq \beta^{(2)} < 2$.
Again the choice of $\alpha, \beta, \alpha^{(2)}$ and $\beta^{(2)}$ equal to their lower bounds provides the strongest continuous relaxation.

It follows  from Theorem~\ref{Thm:DistanceTwoSDP} that one can remove the first linear matrix inequality in  $(ISDP_{2})$ and still obtain an exact formulation of the \textsc{QTSP}. However,  the bound obtained from the SDP relaxation of $(ISDP_{2})$ dominates the bound obtained from the SDP relaxation of  $(ISDP_{1})$. In that sense, the formulation \eqref{eq:ISDP2} can be seen as a level two formulation of the \textsc{QTSP}, whose continuous relaxation is stronger than that of the first level formulation. An additional advantage of the level two formulation is that both linear matrix inequalities may be used to generate CG cuts, as we show in the following section.

In the same vein, one can construct level $k$ formulations of the \textsc{QTSP} for $k = 3, \ldots, n$. This leads to a hierarchy of formulations, whose SDP relaxations are of increasing strength and complexity.

\subsection{Chv\'atal-Gomory cuts for the ISDPs of the QTSP}
\label{Section:CutsforQTSP}
In order to solve $(ISDP_1)$ and $(ISDP_2)$ using our B\&C algorithm, we study various CG-based separation routines for the \textsc{QTSP}.
We first derive a general CG cut generator for the formulations \eqref{eq:ISDP1} and \eqref{eq:ISDP2}. Thereafter, we show how different types of well-known inequalities for the \textsc{QTSP} can be derived as CG cuts of the formulations $(ISDP_{1})$  and $(ISDP_{2})$.

Let us consider $(ISDP_{1})$.  The set $\mathcal{F}_1$, see \eqref{setF1}, consists of all tuples $(\bold{y},\bold{X})$ where $\bold{X}$ represents a node-disjoint cycle cover in $G$. Our B\&C algorithm starts with optimizing over the set $\mathcal{F}_1$, where we are allowed to relax the integrality of $\bold{y}$ at no cost, see Remark~\ref{Remark:integrality}.
If an integer point $(\bold{\hat{y}}, \bold{\hat{X}}) $ is found in the branching tree, it is verified whether $\lambda_{\min} \left(\beta \bold{I_n} + \alpha \bold{J_n} - \frac{1}{2}\left( \bold{\hat{X}} + \bold{\hat{X}}^\top \right) \right) \geq 0$.
If so, then $\bold{\hat{X}} \in \mathcal{T}_n(G)$ and we have found a possibly new incumbent solution.
 If not, then $\bold{\hat{X}}$ is the adjacency matrix of a node-disjoint cycle cover that is not a Hamiltonian cycle. Therefore we have to generate dual matrices that cut off the current point.

The first separation routine that we present is based on finding a set of integer eigenvectors corresponding to a negative eigenvalue of $\beta \bold{I_n} + \alpha \bold{J_n} - \frac{1}{2}\left( \bold{\hat{X}} + \bold{\hat{X}}^\top \right)$.
\begin{proposition} \label{Prop:GomoryCut} Let $\bold{X} \in \Pi_n(G)$ be the adjacency matrix of a directed node-disjoint cycle cover consisting of $k \geq 2$ cycles. Let $\{S_1, \ldots, S_k \}$ be the partition of the nodes implied by the cycle cover and define for each $l \in [k]$ the vector
\begin{align*}
v^l_i := \begin{cases} n - |S_l| & \text{if } i \in S_l \\
- |S_l| & \text{if } i \notin S_l.
\end{cases}
\end{align*}
Then $\left\langle \bold{v^l}(\bold{v^l})^\top , \, \beta \bold{I_n} + \alpha \bold{J_n} - \frac{1}{2}( \bold{X} + \bold{X}^\top ) \right\rangle < 0$ for all $l \in [k]$.
\end{proposition}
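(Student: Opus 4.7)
The plan is to reduce the inner product to a single scalar and then show that scalar is strictly negative. I will set $s_l := |S_l|$ and observe that $\bold{v^l} = n \bold{\mathbbm{1}_{S_l}} - s_l \bold{1}_n$, which allows me to compute the quadratic form term by term.

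First I would compute the easy pieces. Since $\sum_{i} v^l_i = s_l(n-s_l) - (n-s_l)s_l = 0$, the vector $\bold{v^l}$ is orthogonal to $\bold{1}_n$, so $(\bold{v^l})^\top \bold{J_n} \bold{v^l} = 0$. For $\beta \bold{I_n}$, a direct count gives $(\bold{v^l})^\top \bold{v^l} = s_l(n-s_l)^2 + (n-s_l)s_l^2 = n\, s_l(n-s_l)$. These two steps deal with all contributions from $\beta \bold{I_n} + \alpha \bold{J_n}$.

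The main (but not hard) calculation is the cross term $(\bold{v^l})^\top \bold{X} \bold{v^l}$. Using $\bold{v^l} = n\bold{\mathbbm{1}_{S_l}} - s_l \bold{1}_n$ and expanding, I would use three easy identities for the cycle cover: $\bold{X}\bold{1}_n = \bold{X}^\top \bold{1}_n = \bold{1}_n$, which gives $\bold{\mathbbm{1}_{S_l}}^\top \bold{X} \bold{1}_n = \bold{1}_n^\top \bold{X} \bold{\mathbbm{1}_{S_l}} = s_l$ and $\bold{1}_n^\top \bold{X} \bold{1}_n = n$; and the key combinatorial fact that $\bold{\mathbbm{1}_{S_l}}^\top \bold{X} \bold{\mathbbm{1}_{S_l}} = s_l$, because $S_l$ is the node set of a single directed cycle of the cover and hence contributes exactly $s_l$ internal arcs (no arcs leave or enter $S_l$ in a node-disjoint cycle cover). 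Putting it together,
\begin{align*}
(\bold{v^l})^\top \bold{X} \bold{v^l} &= n^2 s_l - 2ns_l \cdot s_l + s_l^2 \cdot n = n\, s_l(n-s_l),
\end{align*}
and by symmetry $(\bold{v^l})^\top \bold{X}^\top \bold{v^l}$ equals the same quantity.

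Combining all terms, the inner product collapses to
\begin{align*}
\left\langle \bold{v^l}(\bold{v^l})^\top,\, \beta \bold{I_n} + \alpha \bold{J_n} - \tfrac{1}{2}(\bold{X} + \bold{X}^\top) \right\rangle = n\, s_l(n-s_l)(\beta - 1).
\end{align*}
Because $k \geq 2$ we have $1 \leq s_l \leq n-1$, so $n\, s_l(n-s_l) > 0$, and by assumption $\beta < 1$, so $\beta - 1 < 0$. Hence the whole expression is strictly negative, as claimed. The main obstacle (really the only subtle point) is recognizing that $\bold{\mathbbm{1}_{S_l}}^\top \bold{X} \bold{\mathbbm{1}_{S_l}} = s_l$; this is precisely where the cycle-cover hypothesis enters, and everything else is routine linear algebra.
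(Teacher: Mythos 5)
Your proof is correct, but it takes a more computational route than the paper. The paper's argument is spectral: since $\bold{v^l}$ is constant on each cycle of the cover and each node's successor (and predecessor) lies in the same cycle, one has $\bold{X}\bold{v^l} = \bold{v^l}$ and $\bold{X}^\top\bold{v^l} = \bold{v^l}$, and since $\bold{J_n}\bold{v^l} = \bold{0}$, the vector $\bold{v^l}$ is an eigenvector of $\beta\bold{I_n} + \alpha\bold{J_n} - \tfrac{1}{2}(\bold{X}+\bold{X}^\top)$ with eigenvalue $\beta - 1 < 0$; the inner product is then $(\beta-1)\Vert\bold{v^l}\Vert^2 < 0$ with no further computation. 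You instead expand the quadratic form via $\bold{v^l} = n\bold{\mathbbm{1}_{S_l}} - s_l\bold{1}_n$ and evaluate each piece, with the combinatorial identity $\bold{\mathbbm{1}_{S_l}}^\top\bold{X}\bold{\mathbbm{1}_{S_l}} = s_l$ carrying the load; note this identity and the eigenvector property are the same structural fact (no arcs cross between the parts $S_p$) in different packaging, and your final value $n\,s_l(n-s_l)(\beta-1)$ agrees with $(\beta-1)\Vert\bold{v^l}\Vert^2$. The paper's version is shorter and explains why the resulting inequalities are later called strengthened \emph{eigenvalue} cuts; yours is more elementary and makes the exact magnitude of the violation explicit. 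Both, like the paper, rely on the standing assumption $\beta < 1$ from the surrounding ISDP formulation rather than on anything stated in the proposition itself.
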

\begin{proof}
The vectors $\bold{v^l}$ are eigenvectors of $\bold{X}$ and $\bold{X}^\top$ corresponding to  eigenvalue $1$. Therefore we have:
\begin{align*}
\left( \beta \bold{I_n} + \alpha \bold{J_n} - \frac{1}{2}( \bold{X} + \bold{X}^\top ) \right) \bold{v^l} & = \beta \bold{v^l} + \alpha \left( (n - |S_l|)\cdot|S_l| + (n-|S_l|)\cdot(-|S_l|) \right) \bold{1} - \frac{1}{2}\bold{v^l} - \frac{1}{2}\bold{v^l} \\
& = (\beta - 1)\bold{v^l},
\end{align*}
from where it follows that $\bold{v^l}$ is an eigenvector of $\beta \bold{I_n} + \alpha \bold{J_n} - \frac{1}{2}( \bold{X} + \bold{X}^\top )$ corresponding to eigenvalue $\beta - 1$. Since we assume $\beta < 1$, this eigenvalue is negative, from which the conclusion follows.
\end{proof}
The result of Proposition~\ref{Prop:GomoryCut} can be used within our B\&C algorithm in the following way. Let $\{S_1, \ldots, S_k\}$ be the partition of the nodes implied by the current solution $\bold{\hat{X}}$ and let $\bold{U^l} := \bold{v^l}(\bold{v^l})^\top$ where $\bold{v^l}$ is as defined in Proposition~\ref{Prop:GomoryCut}. Then for each $l \in [k]$ we construct the following CG cuts:
\begin{align}\label{constraint:eig_ISDP1}
\left\langle \bold{U^l}, \frac{1}{2}(\bold{X} + \bold{X}^\top)\right \rangle \leq \left\lfloor \langle \bold{U^l}, \beta \bold{I_n} + \alpha \bold{J_n} \rangle \right \rfloor \quad \Longleftrightarrow \quad \left\langle \bold{U^l}, \bold{X} \right \rangle \leq \left\lfloor \langle \bold{U^l}, \beta \bold{I_n} + \alpha \bold{J_n} \rangle \right \rfloor,
\end{align}
which cut off the current point. Observe that the choice $\alpha = h_n /n$ and $\beta = k_n$ leads to non-integer values for $\alpha$ and $\beta$, i.e., the CG rounding step provides a strengthened eigenvalue cut.

Since the result of Proposition~\ref{Prop:GomoryCut} can be repeated for the extended linear matrix inequality in Theorem~\ref{Thm:DistanceTwoSDP}, we also obtain the following CG cuts with respect to $(ISDP_2)$:
\begin{align} \label{constraint:eig_ISDP2}
\left\langle \bold{U^l}, \bold{X} + \bold{X^{(2)}} \right \rangle \leq \left\lfloor \langle \bold{U^l}, \beta^{(2)} \bold{I_n} + \alpha^{(2)} \bold{J_n} \rangle \right \rfloor \qquad \forall l \in [k].
\end{align}

Next, we consider the class of subtour elimination constraints. It has been shown by \c{C}ezik and Iyengar~\cite{CezikIyengar} that the ordinary subtour elimination constraints defined by Dantzig et al.~\cite{DantzigEtAl} can be obtained as CG cuts for the symmetric TSP, provided that $\alpha$ and $\beta$  equal their lower bounds.
We extend the result from~\cite{CezikIyengar} and present five types of subtour elimination constraints that are in fact (strengthened) CG cuts of $(ISDP_1)$ and/or $(ISDP_2)$, see Table~\ref{Table:SEC}. Many of these constraints do not follow directly from the linear matrix inequalities, but require the addition of a positive multiple of a subset of the affine constraints. It is shown by Fischer~\cite{Fischer2014} that the inequalities IV and V of Table~\ref{Table:SEC} define facets of the asymmetric quadratic traveling salesman polytope.

\begin{table}[h]
\footnotesize
\centering
\begin{tabular}{@{}lcl@{}}
\toprule
 & \textbf{Inequality} & \textbf{Description}  \\ \midrule
\addlinespace[2ex] I & $\sum\limits_{\substack{i \in S \\ j \in S}}x_{ij} \leq |S| - 1, \quad \forall S \subset N, 2 \leq |S| < n$ &   \begin{tabular}[c]{@{}l@{}} CG cut of $\beta \bold{I_n} + \alpha \bold{J_n} - \frac{1}{2}\left( \bold{X} + \bold{X}^\top \right) \succeq \mathbf{0}$ with dual\\ multiplier $\bold{U} = \bold{\mathbbm{1}_S}\bold{\mathbbm{1}_S}^\top$.  \end{tabular}                      \\
\addlinespace[3ex] II                                                                                                   &      $\sum\limits_{\substack{i \in S \\ j \notin S}}x_{ij} \geq 1, \quad \forall S \subset N, 2 \leq |S| < n$                    &     \begin{tabular}[c]{@{}l@{}} CG cut of  $\beta \bold{I_n} + \alpha \bold{J_n} - \frac{1}{2}\left( \bold{X} + \bold{X}^\top \right) \succeq \mathbf{0}$ with dual \\multiplier $\bold{U} = \bold{\mathbbm{1}_S}\bold{\mathbbm{1}_S}^\top$ and $-\bold{X}\mathbf{1} = -\mathbf{1}$ with dual \\ multiplier $\bold{\mathbbm{1}_S}$.      \end{tabular}      \\
\addlinespace[4ex] III & \begin{tabular}[c]{@{}l@{}} $\sum\limits_{l = 1}^k \sum\limits_{\substack{i \in S_l \\ j \in S_l}} x_{ij} - \sum\limits_{l \neq p} \sum\limits_{\substack{i \in S_l \\ j \in S_p}}x_{ij} \leq n - 2k$ \\[0.7cm] $\forall (S_1, \ldots, S_k), \cup_{l = 1}^k S_l = N, S_l\cap S_p = \emptyset \, \, \forall l \neq p$ \end{tabular} & \begin{tabular}[c]{@{}l@{}} CG cut of $\beta \bold{I_n} + \alpha \bold{J_n} - \frac{1}{2}\left( \bold{X} + \bold{X}^\top \right) \succeq \mathbf{0}$ with dual\\ multiplier $\bold{U} = 2 \sum_{l = 1}^k\bold{\mathbbm{1}_{S_l}} \bold{\mathbbm{1}_{S_l}}^\top$
and $-\bold{X} \mathbf{1} = - \mathbf{1}$ with\\dual multiplier $\mathbf{1}$. \end{tabular} \\
\addlinespace[4ex] IV & \begin{tabular}[c]{@{}l@{}} $x_{ij} + x_{ji} + \sum\limits_{\substack{k \in N : \\ (i,k,j) \in \mathcal{A}}} y_{ikj}  + \sum\limits_{\substack{k \in N : \\ (j,k,i) \in \mathcal{A}}} y_{jki} \leq 1$ \\[0.7cm]
$\forall i, j \in N, i \neq j, n \geq 5$
\end{tabular} & \begin{tabular}[c]{@{}l@{}} $S$-CG cut of
$\beta^{(2)} \bold{I_n} + \alpha^{(2)} \bold{J_n} - \frac{1}{2}\big ( (\bold{X} + \bold{X^{(2)}})$ \\ $ +(\bold{X} + \bold{X^{(2)}} )^\top  \big ) \succeq \mathbf{0}$
with dual multiplier \\ $\bold{U} = \bold{\mathbbm{1}_{\{i,j\}}\mathbbm{1}_{\{i,j\}}}^\top$ and $\sum_{\substack{k \in N: \\(i,k,j) \in \mathcal{A}}}y_{ikj} - x^{(2)}_{ij} = 0$, \\
$\sum_{\substack{k \in N: \\(j,k,i) \in \mathcal{A}}}y_{jki} - x^{(2)}_{ji} = 0$,
$-x_{ii} = 0$, $-x_{jj}=0$, \\ $-x^{(2)}_{ii} = 0$ and $-x^{(2)}_{jj} = 0$, each
with dual multiplier 1.
\end{tabular}
\\
\addlinespace[3ex] V & \begin{tabular}[c]{@{}l@{}} $\sum\limits_{\substack{i \in S \\ j \in S}}x_{ij} + \sum\limits_{\substack{i \in S \\ j \in S}}\sum\limits_{\substack{k \in N \setminus S : \\ (i,k,j) \in \mathcal{A}}} y_{ikj}   \leq |S| - 1$ \\[0.7cm] $\forall S \subset N, 2 \leq |S| < \frac{1}{2}n$ \end{tabular} &
\begin{tabular}[c]{@{}l@{}} $S$-CG cut of
$\beta^{(2)} \bold{I_n} + \alpha^{(2)} \bold{J_n} - \frac{1}{2}\big ( (\bold{X} + \bold{X^{(2)}})$ \\ $ +(\bold{X} + \bold{X^{(2)}})^\top  \big ) \succeq \mathbf{0}$
with dual multiplier \\ $\bold{U} = \bold{\mathbbm{1}_{S}\mathbbm{1}_{S}}^\top$ and $\sum_{k \in N: (i,k,j) \in \mathcal{A}}y_{ikj} - x^{(2)}_{ij} = 0$,
for all \\ $i,j \in S$, each with dual multiplier 1, and $-y_{ikj} \leq 0$ \\for all $(i,k,j) \in \mathcal{A}$ with $i,k,j \in S$, each with dual \\ multiplier 1.
\end{tabular} \\
 \bottomrule
\end{tabular}
\caption{Five types of subtour elimination constraints for the \textsc{QTSP} that can be obtained as (strengthened) CG cuts of $(ISDP_1)$ and/or $(ISDP_2)$. The third column describes which (in)equalities and dual multipliers are used to construct the inequality.\label{Table:SEC}}
\end{table}
In Appendix~\ref{App:SEC}, we explicitly derive these inequalities as (strengthened) CG cuts.

\section{Computational Results} \label{Section:ComputationResults}
In this section we test our ISDP formulations of the \textsc{QTSP}, see Section~\ref{Section:QTSP}. We solve the ISDPs using various settings of our CG-based B\&C framework, see Algorithm~\ref{AlgB&C}, where we include different sets of cuts from Section~\ref{Section:CutsforQTSP} in the separation routines. We compare the performance of our approach with the two other ISDP solvers from the literature.

\subsection{Design of numerical experiments}
In total we compare seven different approaches, among which two from the literature and five variants of our B\&C approach. The former class consists of the following:
\begin{itemize}
\item \textbf{\textit{KT}:} The B\&C algorithm of Kobayashi and Takano \cite{KobayashiTakano}, see Section~\ref{Subsection:B&C}.
\item \textbf{\textit{SCIP-SDP}:} The general ISDP solver of Gally et al.\ \cite{GallyEtAl}. This approach is based on solving continuous SDPs in a B\&B framework.
\end{itemize}
A third project that is known for its ability to solve ISDPs is YALMIP \cite{Lofberg}. Preliminary experiments show, however, that the solver of \cite{Lofberg} is significantly outperformed by the solvers from \cite{GallyEtAl} and \cite{KobayashiTakano}. Therefore, we do not take the solver of YALMIP into account. 

On top of the approaches from the literature, we consider five variants of our B\&C procedure that differ in the initial feasible set and the type of cuts that we add in the separation routine:
\begin{itemize}
\item \textbf{\textit{CG1}:} In this setting we solve $(ISDP_1)$ where we initially optimize over $\mathcal{F}_1$, see \eqref{setF1}. In the separation routine we add the CG cut of the form \eqref{constraint:eig_ISDP1} for each subtour present in the current candidate solution.

\item \textbf{\textit{CG2}:} In this setting we solve the second \textsc{QTSP} formulation $(ISDP_2)$. We initially optimize over $\mathcal{F}_2$, see \eqref{setF2}, and in each callback iteration we add the CG cuts of the form \eqref{constraint:eig_ISDP1} and \eqref{constraint:eig_ISDP2} for each subtour in the current candidate solution.

\item \textbf{\textit{SEC-simple}:} In this setting we solve $(ISDP_1)$ by starting from optimizing over $\mathcal{F}_1$, see \eqref{setF1}. In the callback procedure, we add the ordinary subtour elimination constraints, see Type~I in Table~\ref{Table:SEC}, for all subtours in the current candidate solution.

\item \textbf{\textit{SEC}:} \label{page:SEC}This setting solves $(ISDP_2)$ with subtour elimination constraints of Type~I, IV and V from Table~\ref{Table:SEC}. The latter type of constraint is added only for the subtours of size less than $\frac{1}{2}n$. Since the order two variables $\bold{X^{(2)}}$ in this setting do not appear directly in the cutting planes, we eliminate them also from the initial MILP based on preliminary tests. That is, we start optimizing over $\mathcal{F}_1$, see \eqref{setF1}. Moreover, based on a result by Fischer et al.\ \cite{AandFFischer} we also add additional cuts to forbid subtours of three nodes. For a triple $i, j, k$ of distinct nodes, the following cut is valid for any tour:
$y_{ijk} + y_{kij} \leq x_{ij}.$
We add this cut for all distinct $i, j, k \in S$ in the separation routine whenever a subtour on $S$ with $|S| = 3$ is present in the current candidate solution. Observe that there are six of them for each triple of nodes.

\item \textbf{\textit{SEC-CG}:} This setting solves $(ISDP_2)$, starting from $\mathcal{F}_2$, see \eqref{setF2}. In the separation routines, we add all the cuts that are included in the previous setting SEC. Moreover, on top of that we also add the CG cuts \eqref{constraint:eig_ISDP1} and \eqref{constraint:eig_ISDP2} in the callback procedure.

\end{itemize}
Recall that the separation routines are only called at integer points, which represent cycle covers of $G$. Therefore, the separation of all mentioned cuts boils down to identifying the subtours in the cycle cover. Also, recall that the integrality of $\bold{y}$ is relaxed in all settings, see Remark~\ref{Remark:integrality}.

The setting SEC looks similar to the best exact \textsc{QTSP} solving strategy of Fischer et al.\ \cite{Fischer2014}. However, there are two main differences between the methods. First, our separation routine is only called on integer points, while the algorithm of \cite{AandFFischer} separates on fractional points. The separation on integer points is computationally very cheap compared to the fractional separation method applied by~\cite{AandFFischer}. Consequently, the former separation can lead to superior behavior, as observed by Aichholzer et al.\ \cite{Aichholzer} for the symmetric \textsc{QTSP}. Second, our approach results from a more general B\&C framework for solving integer SDPs, which is not limited to the \textsc{QTSP}.

Notice that the derived CG cuts of Type~II and III from Table~\ref{Table:SEC} are not added in the test settings. Preliminary experiments have shown that the cut-set subtour elimination constraints (Type~II of Table~\ref{Table:SEC}) have similar practical behaviour compared to the ordinary subtour elimination constraints. Also, preliminary tests show that the addition of one merged Type~III cut instead of all separate Type~I cuts leads to worse behaviour in terms of overall computation time. We expect this difference to be caused by the sparsity of the Type~I cuts, compared to the very dense Type~III cuts.

For our tests, we consider three types of instances\footnote{Instances can be downloaded from \url{https://github.com/frankdemeijer/CGforISDP}.}:
\begin{itemize}
\item \textbf{Real instances from bioinformatics:} J\"ager and Molitor~\cite{Jager}, Fischer~\cite{Fischer2014} and Fischer et al.~\cite{AandFFischer, AandFFischer_RNA} consider an important application of the \textsc{QTSP} in computational biology. In order to recognise transcription factor binding sites or RNA splice sites in a given set of DNA sequences, Permuted Markov (PM) models \cite{EllrotEtAl} or Permuted Variable Length Markov (PVLM) models~\cite{ZhaoEtAl_bio} can be used. Finding the optimal order two PM or PVLM model boils down to solving a \textsc{QTSP} instance. We consider three classes of bioinformatics instances used in~\cite{Fischer, Fischer2014}, which are denoted by `bma', `map' and `ml'. Each class consists of 38 instances with $n \in \{3, \ldots, 40\}$.

\item \textbf{Reload instances:} The reload instances are the same as the ones used by Rostami et al.~\cite{RostamiEtAl} and De Meijer and Sotirov~\cite{DeMeijerSotirov2}. The reload model~\cite{WirthSteffan} is inspired by logistics and energy distribution, where a certain cost is incurred whenever the underlying type of arc in a network changes, e.g., the means of transport. Let $G$ be a directed graph where each arc $(i,j)$ is present with probability $p$. Each arc in $G$ is randomly assigned a color from a color set $L$ with cardinality $c$. If two successive arcs $e$ and $f$ have colors $s$ and $t$, respectively, the quadratic cost among $e$ and $f$ equals $r(s,t)$, where $r: L \times L \rightarrow \mathbb{R}$ is a reload cost function such that $r(s,s) = 0$ for all $s \in L$. We consider two types of reload classes:
\begin{itemize}
\item \textit{Reload class 1}: For each pair of distinct colors $s,t \in L$ the reload cost equals $r(s,t) = 1$;
\item \textit{Reload class 2}: For each pair of distinct colors $s,t \in L$, the reload cost $r(s,t)$ is chosen uniformly at random from $\{1, \ldots, 10\}$.
\end{itemize}
For each class, we consider 10 distinct instances for each possible combination of $n \in \{10, 15, 20, {25}\}$, $p \in \{0.5, 1\}$ and $c \in \{5, 10, 20\}$, {except for the combination between $n = 25$ and $p = 1$ due to extremely large computation times.} Thus, in total we consider {420} reload instances.

\item \textbf{Turn cost instances:} The special case of the \textsc{QTSP} where the nodes are points in Euclidean space and the angle cost of a tour is the sum of the direction changes at the points is called the Angular-Metric Traveling Salesman Problem (\textsc{Angle-TSP}) \cite{Aggarwal}. The \textsc{Angle-TSP} is motivated by VLSI design and proven to be $\mathcal{NP}$-hard \cite{Aggarwal}. The problem is in the literature also known as the Minimum Bends Traveling Salesman Problem \cite{SteinWagner}. We consider two classes of this type:
\begin{itemize}
\item \textit{TSPLIB instances:} The TSP library (TSPLIB) \cite{TSPLIB} contains a broad set of TSP test instances, among which a large number of Euclidean instances. We construct a corresponding \textsc{QTSP} instance as follows: Given points $v_1, \ldots, v_n$ in $\mathbb{R}^2$, we let $G$ be the complete graph on $n$ vertices. For $i, j, k$, $i \neq j$, $j \neq k$, $i \neq k$, we define $q_{ijk}$ to be proportional to the angle between edges $\{i, j\}$ and $\{j, k\}$. More precisely,
\begin{align*}
q_{ijk} := \left\lceil 10 \cdot \left( 1 - \frac{1}{\pi}\arccos\left( \frac{(v_i - v_j)^\top (v_k - v_j)}{\Vert v_i - v_j \Vert \cdot \Vert v_j - v_k \Vert} \right) \right) \right \rceil .
\end{align*}
This cost structure is similar to the angle-distance costs considered in Fischer et al.~\cite{AandFFischer} and De Meijer and Sotirov \cite{DeMeijerSotirov}.
In total, we consider 9 TSPLIB instances with $n$ ranging from 15 to 70. Figure~\ref{fig:kn57} depicts one of the TSPLIB instances including its optimal tour with respect to the defined quadratic cost structure.

\item \textit{Grid instances:} Fekete and Krupke \cite{FeketeKrupke, FeketeKrupke2} consider problems of computing optimal covering tours and cycle covers under a turn cost model, see also Arkin et al.\ \cite{ArkinEtAl}. These problems have many practical applications, such as pest control and precision farming. Following this line, we consider the \textsc{Angle-TSP} on grid graphs. We construct a 2D connected grid graph using the Type II instance generator of \cite{FeketeKrupke2}. Given the vertex coordinate vectors $v^1, \ldots, v^n \in \{0, \ldots, N_1\} \times \{0, \ldots, N_2\}$ for integers $N_1, N_2$, we include an edge between vertex $i$ and $j$ if and only if ($v_1^i = v_1^j$ and $|v^i_2 - v^j_2| = 1$) or ($v^i_2 = v^j_2$ and $|v^i_1 - v^j_1| = 1$). If two edges $\{i, j\}$ and $\{j, k\}$ are present, the quadratic costs are computed similar as for the TSPLIB instances. In total we consider 9 grid instances with $N_1$ and $N_2$ running from 20 to 80, corresponding to $n$ ranging from $430$ to $2646$. An example of a grid instance including its minimum bend tour is given in Figure~\ref{fig:grid1}. 
\end{itemize}
Both types of turn cost instances are in fact instances of the symmetric \textsc{QTSP}, as they are defined on undirected graphs. To account for this, we use symmetrized versions of $(ISDP_1)$ and $(ISDP_2)$ instead. We refer to Appendix~\ref{App:SQTSP} for the construction of these formulations.
\end{itemize}
\begin{figure}[h]
\centering
\begin{subfigure}[b]{0.45\textwidth}
\centering
\includegraphics[width = 0.8\textwidth]{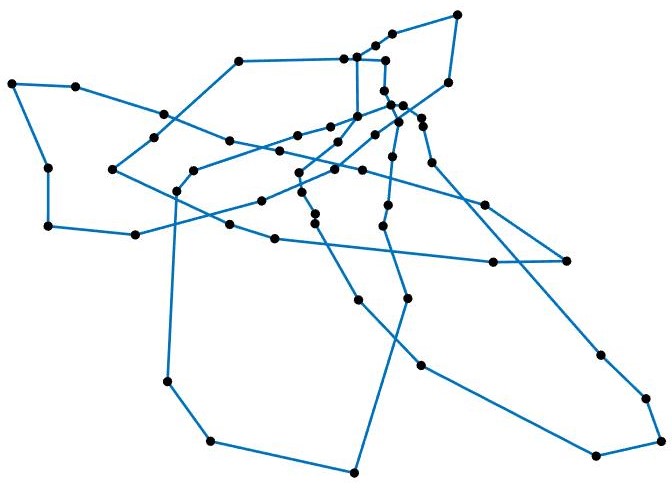}
\caption{Instance `kn57'}
\label{fig:kn57}
\end{subfigure}
\begin{subfigure}[b]{0.45\textwidth}
\centering
\includegraphics[width = 0.8\textwidth]{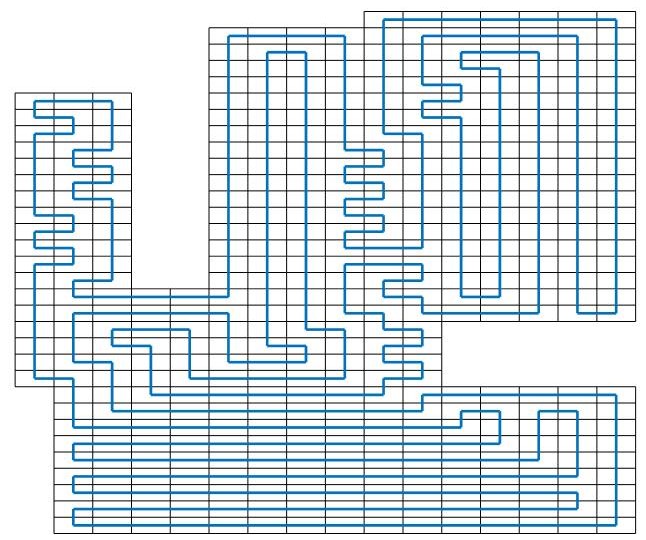}
\caption{Instance `grid1' \label{fig:grid1}}
\end{subfigure}
\caption{Optimal tours of two turn instances: the TSPLIB instance `kn57' ($n = 57$) and the grid instance `grid1' ($n = 430$).  Each square in Figure 2b represents a vertex in the grid graph.}
\end{figure}
All our algorithms, including the algorithm of Kobayashi and Takano \cite{KobayashiTakano}, are implemented in Julia~1.5.3 using JuMP v0.21.10 \cite{DunningEtAl} to model the mathematical optimization problems. In particular, we exploit the solver-independent lazy constraint callback option of JuMP to include the separation routines. Solving the underlying MILP in the subproblems is done using Gurobi v9.10 \cite{Gurobi} in the default settings including built-in cuts. Experiments are carried out on a PC with an Intel(R) Core(TM) i7-8700 CPU, 3.20GHz, 8GB RAM. To run SCIP-SDP, we use SCIP-SDP version 3.2.0 on the NEOS Server \cite{NEOS}, where the B\&B framework of SCIP 7.0.0 \cite{SCIP700} and the SDP solver Mosek~9.2~\cite{Mosek} are combined in the default configuration. 

Observe that an older version of SCIP-SDP with DSDP \cite{BensonEtAl} as SDP solver was used in the numerical experiments of \cite{KobayashiTakano}, which partly explains the poor behaviour of SCIP-SDP compared to the B\&C algorithm of \cite{KobayashiTakano}. However, our computational study that uses SCIP-SDP with the state-of-the-art SDP solver Mosek \cite{Mosek} also shows superior behaviour of the B\&C algorithms.

We test all seven settings on the bioinformatics and reload instances. Since these instance classes give a clear and consistent overview of the superior approaches, we restrict ourselves to the best three settings for the turn cost instances. The maximum computation time for all our approaches is set to 8 hours, which is in correspondence with the maximum computation time on the NEOS Server \cite{NEOS}.

\subsection{Comparison of approaches}
Table~\ref{Tab:bio_results} and Figure~\ref{fig:bio_results} provide an overview of the performance on the instances from bioinformatics. For each setting, the average values in Table~\ref{Tab:bio_results} are only computed over the instances that could be solved to optimality for that setting. An extended table on the results per instance can be found in Appendix~\ref{App:computations}.
{Observe that the percentage of instances solved is quite similar over the three instance classes. This indicates that it is mainly the size rather than the cost structure that determines whether a bioinformatics instance can be solved or not.}
It is clear that our B\&C settings significantly outperform the other two ISDP solvers SCIP-SDP and KT, which can solve at most 60\% of the instances to optimality. Since the separation routine of CG1 is based on the identification of an integer eigenvector corresponding to a negative eigenvalue, the settings KT and CG1 are almost identical apart from the CG rounding step. The large decrease in the number of branching nodes of CG1 compared to KT is remarkable. This indicates that the effect of deeper cuts as shown in Figure~\ref{fig:B&Cexample} is not solely theoretical, but also {turns out to be substantial} from a practical point of view.

When comparing the five different separation routines of our B\&C approach, we also see a clear pattern. The settings SEC and SEC-CG turn out to be superior, being able to solve all instances within short computation times. Although SEC generally provides the fastest algorithm, it sometimes happens that SEC-CG solves the instance faster, see Figure~\ref{fig:bio_results}, due to the smaller number of B\&C nodes. This shows that the additional CG cuts can sometimes improve on the subtour elimination constraints. The two approaches are followed by SEC-simple, which is able to solve instances up to $n = 35$ to optimality. This difference is mainly due to the strengthened subtour elimination cuts (type IV and V in Table~\ref{Table:SEC}) that work well for the bioinformatics instances, as also noted by Fischer et al.\ \cite{AandFFischer}. Finally, the settings CG1 and CG2 are only able to solve instances up to $n = 32$ and $n = 27$, respectively. Although the distance two CG cuts \eqref{constraint:eig_ISDP2} significantly reduce the number of needed branching steps, the overall computation time is larger due to the increase in the number of variables and constraints in CG2.

\begin{table}[h]
\centering
\footnotesize
\begin{tabular}{@{}ll|ccccccc@{}}
\hline
\textbf{Type}\rule{0pt}{3ex} & \textbf{Statistic}       & \textbf{SCIP-SDP} & \textbf{KT} & \textbf{CG1} & \textbf{CG2} & \textbf{SEC-simple} & \textbf{SEC}   & \textbf{SEC-CG} \\ \hline
\textbf{bma}\rule{0pt}{3ex}  & Instances solved (\%)     & 34.21             & 60.53       & 78.95        & 65.79        & 84.21               & \textbf{100}   & \textbf{100}    \\
\textbf{}     & Average comp. time & 1519              & 1581        & 846.1        & 639.40       & 817.43              & \textbf{28.31} & 182.3           \\
\textbf{}     & Average B\&C nodes        & 30308             & 854964      & 119144       & 42515        & 85984               & 200.4          & 142.5           \\
\textbf{}     & Average time per node    & 0.025             & 0.001       & 0.002        & 0.006        & 0.005               & 0.184          & 1.158           \\[0.5em]
\textbf{map}  & Instances solved (\%)     & 34.21             & 57.89       & 78.95        & 65.79        & 81.58               & \textbf{100}   & \textbf{100}    \\
              & Average comp. time & 2247              & 1721        & 1768         & 806.6        & 911.4               & \textbf{25.83} & 199.6           \\
              & Average B\&C nodes        & 30385             & 896340      & 245732       & 56197        & 79869               & 244            & 173             \\
              & Average time per node    & 0.037             & 0.001       & 0.002        & 0.009        & 0.004               & 0.496          & 2.094           \\[0.5em]
\textbf{ml}   & Instances solved (\%)     & 34.21             & 57.89       & 76.32        & 65.79        & 81.58               & \textbf{100}   & \textbf{100}    \\
              & Average comp. time & 2891              & 1315        & 460.9        & 805.6        & 520.2               & \textbf{27.34} & 221.7           \\
              & Average B\&C nodes        & 33185             & 658640      & 86743        & 44342        & 51495               & 252.0          & 186.3           \\
              & Average time per node    & 0.034             & 0.001       & 0.002        & 0.007        & 0.005               & 0.096          & 0.961           \\ \hline
\end{tabular}
\caption{Summary table of the performance on the bioinformatics instances per setting and per instance type. The best performing setting per row is given in bold. \label{Tab:bio_results}}
\end{table}

\begin{figure}[H]
\centering
\includegraphics[scale=0.34]{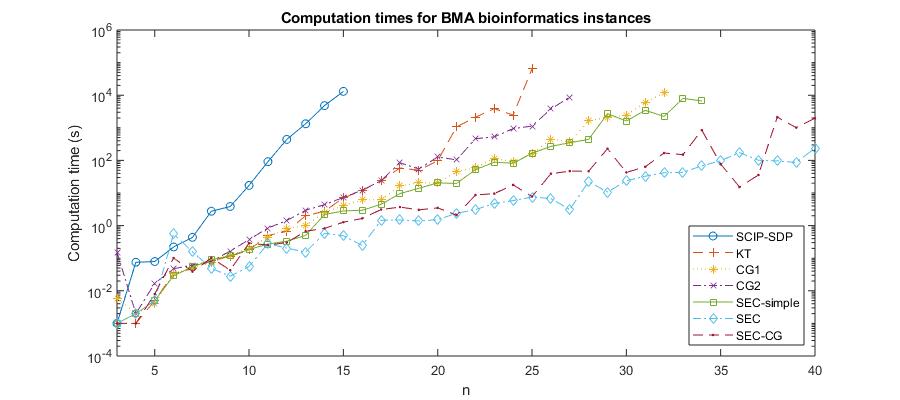}
\includegraphics[scale=0.34]{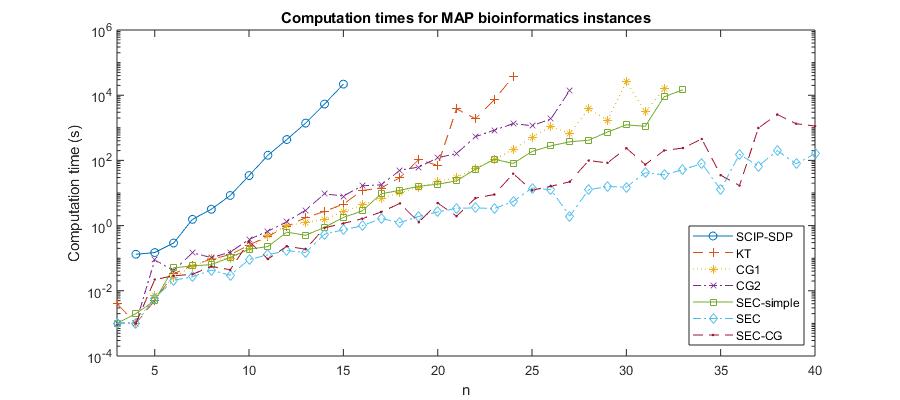}
\includegraphics[scale=0.34]{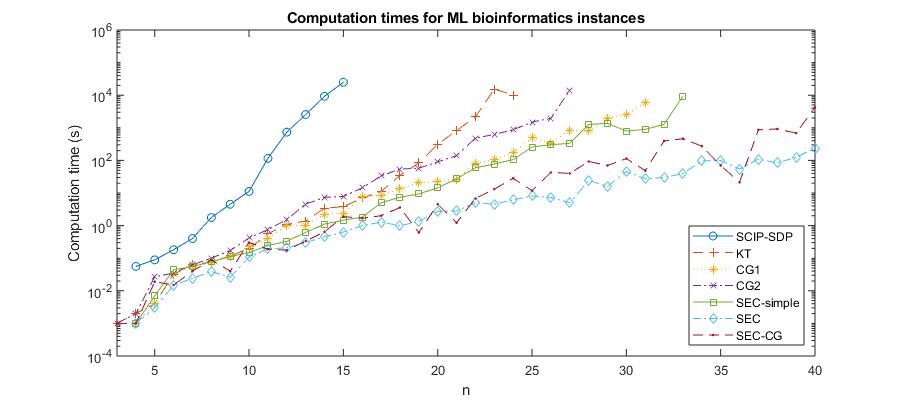}
\caption{Computation times versus instance size for the bioinformatics classes `bma' (top), `map' (middle) and `ml' (bottom). The computation times are given on a logarithmic scale.}
\label{fig:bio_results}
\end{figure}

Next, we discuss the results on the set of reload instances. For both class 1 and 2 and for each value of $n, p$ and $c$ we consider 10 randomly generated instances. The averaged results for each combination of parameters can be found in Appendix~\ref{App:computations}, see Table~\ref{Tab:Reload_ext1}, \ref{Tab:Reload_ext2} { and \ref{Tab:Reload_ext3}}. In general, we see that the computation times increase with the number of nodes $n$ and the graph density $p$. On the other hand, if the number of colors $c$ increases, the instances become easier to solve as the number of (optimal) solutions will decrease. Table~\ref{Tab:reload_results} shows a summary of the results accumulated over the number of colors $c$. 
Accordingly, Figure~\ref{fig:reload_results} shows the spread of the computation times, where we also accumulate both reload classes.

When comparing the different settings, we draw  similar conclusions as before. Note that SCIP-SDP performs very poorly on the reload instances. The difference between KT and CG1 is not as significant as before, although CG1 is still favourable above KT on almost all instance types. The settings that involve the variables $\bold{X^{(2)}}$ in the root node, i.e., CG2 and SEC-CG, are outperformed by SEC-simple and SEC. Apparently, the increase in the number of variables does not contribute much to the pruning of the branching tree. In fact, the results in Appendix~\ref{App:computations} even suggest that the number of branching nodes sometimes becomes larger. The large spread in computation times for these settings, see  Figure~\ref{fig:reload_results}, also suggests that $(ISDP_2)$ leads to a search process that is less robust and that this effect becomes more visible as the instances become larger. However, the $S$-CG cuts resulting from $(ISDP_2)$ do contribute to the pruning of the tree, as is suggested by the strong performance of SEC.
The settings SEC and SEC-simple overall perform best. None of the two algorithms outperforms the other in terms of computation time, even when the problem size goes up, see the additional numerical results in Table~\ref{Tab:Reload_ext3} of Appendix~\ref{App:computations}.

\begin{table}[H]
\footnotesize
\centering
\begin{tabular}{ccccccccccc}
\hline
\rule{0pt}{3ex}                       & \multicolumn{2}{c}{\textbf{Instance}}  & \textbf{}                  & \multicolumn{7}{c}{\textbf{Average computation times (s)}}                                                                       \\[0.5em]
\textbf{Class}                  & \boldmath{$n$} & \boldmath{$p$}            & \textbf{OPT}               & \textbf{SCIP-SDP} & \textbf{KT} & \textbf{CG1} & \textbf{CG2} & \textbf{SEC-simple} & \textbf{SEC} & \textbf{SEC-CG} \\ \hline
\multicolumn{1}{c|}{\textbf{1}} & 10\rule{0pt}{3ex}           & \multicolumn{1}{c|}{0.5}  & \multicolumn{1}{c|}{6.233} & 0.161             & 0.035       & 0.028        & 0.035        & 0.024               & \textbf{0.019}       & 0.031           \\
\multicolumn{1}{c|}{\textbf{}}  & 10           & \multicolumn{1}{c|}{1} & \multicolumn{1}{c|}{3.3}   & 1.627             & 0.133       & 0.127        & 0.165        & 0.128               & \textbf{0.113}        & 0.171           \\
\multicolumn{1}{c|}{\textbf{}}  & 15           & \multicolumn{1}{c|}{0.5}  & \multicolumn{1}{c|}{6.367} & 2.256             & 0.158       & 0.160        & 0.251        & 0.142               & \textbf{0.139}        & 0.223           \\
\multicolumn{1}{c|}{\textbf{}}  & 15           & \multicolumn{1}{c|}{1} & \multicolumn{1}{c|}{2.8}   & 244.0             & 1.426       & 1.124        & 4.503        & 1.095               & \textbf{1.040}        & 2.825           \\
\multicolumn{1}{c|}{\textbf{}}  & 20           & \multicolumn{1}{c|}{0.5}  & \multicolumn{1}{c|}{6.2}   & 82.08             & 0.610       & 0.625        & 1.510        & 0.483               & \textbf{0.465}        & 1.237           \\
\multicolumn{1}{c|}{\textbf{}}  & 20           & \multicolumn{1}{c|}{1} & \multicolumn{1}{c|}{2.314} & 3908              & 183.8       & 91.56        & 1910         & 162.5               & \textbf{43.21}        & 3278            \\
\multicolumn{1}{c|}{\textbf{}}  & 25           & \multicolumn{1}{c|}{0.5} & \multicolumn{1}{c|}{6.6} &      -     & 76.20  &     35.70     &     1141         &   17.10      &     \textbf{16.30} & 249.1       \\
\multicolumn{1}{c|}{\textbf{2}} & 10           & \multicolumn{1}{c|}{0.5}  & \multicolumn{1}{c|}{22.74} & 0.185             & 0.036       & 0.039        & 0.049        & \textbf{0.029}               & \textbf{0.029}        & 0.044           \\
\multicolumn{1}{c|}{\textbf{}}  & 10           & \multicolumn{1}{c|}{1} & \multicolumn{1}{c|}{8.2}   & 0.962             & 0.164       & 0.148        & 0.152        & \textbf{0.116}               & 0.139        & 0.157           \\
\multicolumn{1}{c|}{\textbf{}}  & 15           & \multicolumn{1}{c|}{0.5}  & \multicolumn{1}{c|}{22.73} & 2.989             & 0.193       & 0.174        & 0.255        & 0.173               & \textbf{0.172}        & 0.261           \\
\multicolumn{1}{c|}{\textbf{}}  & 15           & \multicolumn{1}{c|}{1} & \multicolumn{1}{c|}{6.767} & 277.5             & 1.363       & 1.768        & 4.643        & 1.293               & \textbf{1.190}        & 3.290           \\
\multicolumn{1}{c|}{\textbf{}}  & 20           & \multicolumn{1}{c|}{0.5}  & \multicolumn{1}{c|}{18.1}  & 58.68             & 0.575       & 0.585        & 1.246        & \textbf{0.552}               & 0.576        & 1.352           \\
\multicolumn{1}{c|}{\textbf{}}  & 20           & \multicolumn{1}{c|}{1} & \multicolumn{1}{c|}{4.745} & 2689              & 43.99       & 20.88        & 1187         & \textbf{11.89}               & 16.88        & 850.2           \\ 
\multicolumn{1}{c|}{\textbf{}}  &  25           & \multicolumn{1}{c|}{0.5} & \multicolumn{1}{c|}{16.37} &   -           &  1298     &  315.1      &    5159      &   94.81             &    \textbf{75.81}     & 1701  \\ \hline
\end{tabular}
\caption{Overview of average computation times for the reload instances. Each row provides averages of 30 instances, namely 10 random instances for each value of $c = 5, 10, 20$. \label{Tab:reload_results}}
\end{table}

\begin{figure}[H]
\centering
\includegraphics[scale=0.31]{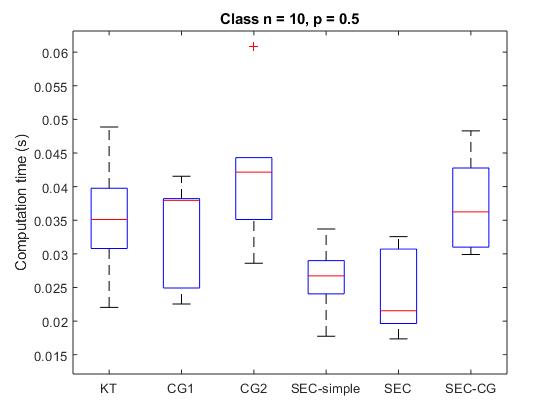}
\includegraphics[scale=0.31]{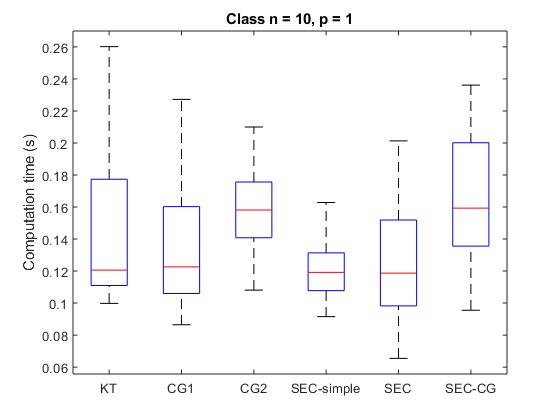}
\includegraphics[scale=0.31]{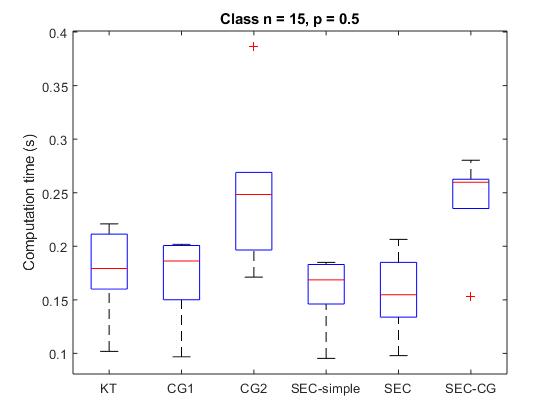}
\includegraphics[scale=0.31]{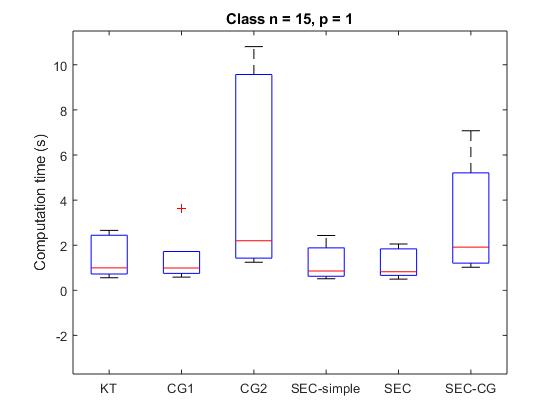} 
\end{figure}
\begin{figure}[H]
\centering
\includegraphics[scale=0.31]{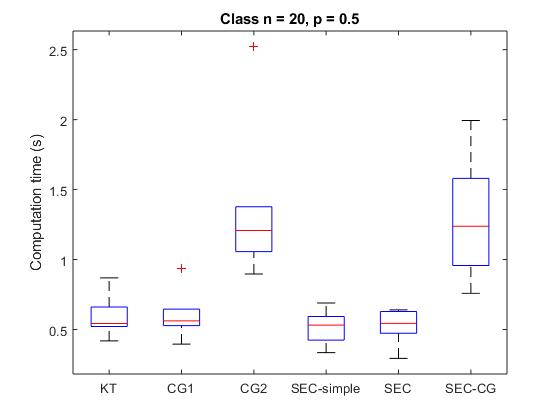}
\includegraphics[scale=0.31]{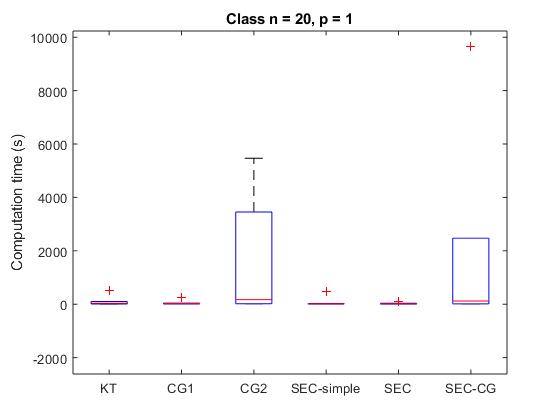}
\includegraphics[scale=0.31]{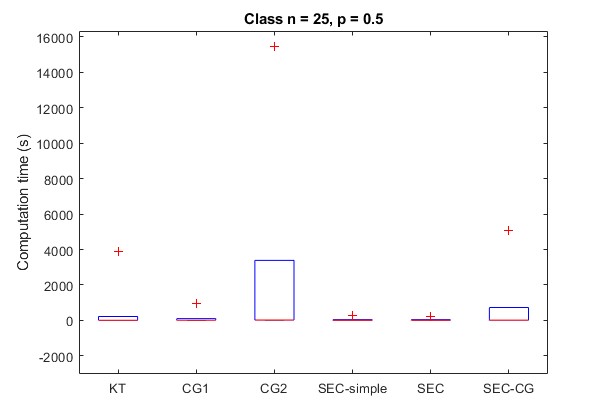}
\caption{Boxplots showing the computation times for the reload instances for different values of $n$ and $p$, accumulated over the reload class and the number of colors $c$. We omit the results of SCIP-SDP, since these computation times are several magnitudes larger. \label{fig:reload_results}}
\end{figure}

Finally, we consider the turn cost instances. From the class of bioinformatics and reload instances it is clear that the settings SEC-simple, SEC and SEC-CG generally perform best. Hence, we restrict the numerical results on the turn cost instances to these three settings. Table~\ref{Tab:tsplib_results} and \ref{Tab:grid_results} show the computation times and number of branching nodes for the TSPLIB and grid instances, respectively.

The TSPLIB graphs are complete graphs, and hence we can only solve up to $n = 70$ for this instance type. We are able to solve all TSPLIB instances in a time span 900 seconds. Since the grid instances are more sparse, we can solve much larger instance sizes to optimality. For this type, instances up to 2646 nodes (!)~can be solved to optimality within 15 seconds. These are currently the largest solved QTSP instances in the literature.

When comparing the three settings, we see that SEC-simple and SEC perform slightly better than SEC-CG on the turn cost instances. Since the different separation routines lead to different relaxations, the branching strategy between the methods can differ. Not surprisingly, the favourable setting is often the one with the smallest number of B\&C nodes, regardless of the time per branching node. Taking both the TSPLIB and grid instances into account, this happens slightly more often for the setting SEC-simple.

\begin{table}[h]
\centering
\footnotesize
\begin{tabular}{cccc|cc|cc|cc}
\hline
               \rule{0pt}{3ex}                           &              &              &                & \multicolumn{2}{c|}{\textbf{SEC-simple}}                                                                                                     & \multicolumn{2}{c|}{\textbf{SEC}}                                                                                                            & \multicolumn{2}{c}{\textbf{SEC-CG}}                                                                                                          \\[0.5em]
\textbf{Instance}                       & \boldmath{$n$} & \boldmath{$m$} & \textbf{OPT} & \textbf{\begin{tabular}[c]{@{}c@{}}Comput.\\ time (s)\end{tabular}} & \textbf{\begin{tabular}[c]{@{}c@{}}Number\\ of nodes\end{tabular}} & \textbf{\begin{tabular}[c]{@{}c@{}}Comput.\\ time (s)\end{tabular}} & \textbf{\begin{tabular}[c]{@{}c@{}}Number\\ of nodes\end{tabular}} & \textbf{\begin{tabular}[c]{@{}c@{}}Comput.\\ time (s)\end{tabular}} & \textbf{\begin{tabular}[c]{@{}c@{}}Number\\ of nodes\end{tabular}} \\ \hline
\multicolumn{1}{c|}{\textbf{lau15}}     & 15\rule{0pt}{3ex}           & 105          & 47             & 0.278                                                                   & 1                                                                  & 0.458                                                                   & 1                                                                  & \textbf{0.115}                                                          & 1                                                                  \\
\multicolumn{1}{c|}{\textbf{wg22}}      & 22           & 231          & 63             & 0.643                                                                   & 1                                                                  & \textbf{0.436}                                                          & 1                                                                  & 0.490                                                                   & 1                                                                  \\
\multicolumn{1}{c|}{\textbf{bays29}}    & 29           & 406          & 78             & 1.519                                                                   & 96                                                                 & \textbf{0.905}                                                          & 93                                                                 & 0.949                                                                   & 78                                                                 \\
\multicolumn{1}{c|}{\textbf{dantzig42}} & 42           & 861          & 96             & \textbf{11.25}                                                          & 994                                                                & 12.04                                                                   & 1059                                                               & 21.20                                                                   & 1458                                                               \\
\multicolumn{1}{c|}{\textbf{att48}}     & 48           & 1128         & 105            & 53.07                                                                   & 4104                                                               & \textbf{47.64}                                                          & 3627                                                               & 55.52                                                                   & 3375                                                               \\
\multicolumn{1}{c|}{\textbf{berlin52}}  & 52           & 1326         & 118            & \textbf{702.9}                                                          & 36523                                                              & 1115                                                                    & 49265                                                              & 1070                                                                    & 41041                                                              \\
\multicolumn{1}{c|}{\textbf{kn57}}      & 57           & 1596         & 120            & 153.1                                                                   & 2425                                                               & \textbf{110.8}                                                          & 1539                                                               & 138.6                                                                   & 1804                                                               \\
\multicolumn{1}{c|}{\textbf{wg59}}      & 59           & 1711         & 121            & \textbf{391.2}                                                          & 10503                                                              & 898.7                                                                   & 13627                                                              & 650.1                                                                   & 10269                                                              \\
\multicolumn{1}{c|}{\textbf{st70}}      & 70           & 2415         & 137            & 861.8                                                                 & 8596                                                               & \textbf{838.1}                                                          & 4649                                                               & 1862                                                                    & 12222                                                              \\ \hline
\end{tabular}
\caption{Computation times and number of branching nodes for the TSPLIB instances. \label{Tab:tsplib_results}}
\end{table}

\begin{table}[h]
\footnotesize
\centering
\begin{tabular}{cccc|cc|cc|cc}
\hline
      \rule{0pt}{3ex}                               &              &              &              & \multicolumn{2}{c|}{\textbf{SEC-simple}}                                                                                                     & \multicolumn{2}{c|}{\textbf{SEC}}                                                                                                            & \multicolumn{2}{c}{\textbf{SEC-CG}}                                                                                                          \\[0.5em]
\textbf{Instance}                   & \boldmath{$n$} & \boldmath{$m$} & \textbf{OPT} & \textbf{\begin{tabular}[c]{@{}c@{}}Comput.\\ time (s)\end{tabular}} & \textbf{\begin{tabular}[c]{@{}c@{}}Number\\ of nodes\end{tabular}} & \textbf{\begin{tabular}[c]{@{}c@{}}Comput.\\ time (s)\end{tabular}} & \textbf{\begin{tabular}[c]{@{}c@{}}Number\\ of nodes\end{tabular}} & \textbf{\begin{tabular}[c]{@{}c@{}}Comput.\\ time (s)\end{tabular}} & \textbf{\begin{tabular}[c]{@{}c@{}}Number\\ of nodes\end{tabular}} \\ \hline
\multicolumn{1}{c|}{\textbf{grid1}} & 430 \rule{0pt}{3ex}          & 795          & 620          & 1.431                                                                   & 6                                                                  & 1.538                                                                   & 1                                                                  & \textbf{1.020}                                                          & 1                                                                  \\
\multicolumn{1}{c|}{\textbf{grid2}} & 734          & 1393         & 460          & \textbf{14.86}                                                          & 2781                                                               & 20.29                                                                   & 7942                                                               & 19.33                                                                  & 2562                                                               \\
\multicolumn{1}{c|}{\textbf{grid3}} & 880          & 1672         & 590          & \textbf{3.303}                                                          & 30                                                                 & 5.019                                                                   & 78                                                                 & 5.945                                                                   & 207                                                                \\
\multicolumn{1}{c|}{\textbf{grid4}} & 960          & 1802         & 840          & 4.954                                                                   & 3                                                                  & \textbf{3.731}                                                          & 1                                                                  & 7.507                                                                   & 1                                                                  \\
\multicolumn{1}{c|}{\textbf{grid5}} & 1038         & 1965         & 440          & 8.452                                                                   & 24                                                                 & \textbf{4.514}                                                          & 16                                                                 & 8.192                                                                   & 10                                                                 \\
\multicolumn{1}{c|}{\textbf{grid6}} & 1214         & 2335         & 480          & 19.67                                                                   & 57                                                                 & \textbf{15.61}                                                          & 25                                                                 & 23.27                                                                   & 55                                                                 \\
\multicolumn{1}{c|}{\textbf{grid7}} & 1302         & 2493         & 730          & \textbf{9.121}                                                          & 330                                                                & 17.83                                                                   & 177                                                                & 14.65                                                                   & 181                                                                \\
\multicolumn{1}{c|}{\textbf{grid8}} & 1788         & 3469         & 540          & 4.800                                                                   & 1                                                                  & 4.917                                                                   & 1                                                                  & \textbf{4.619}                                                          & 1                                                                  \\
\multicolumn{1}{c|}{\textbf{grid9}} & 2646         & 5172         & 760          & 13.79                                                                   & 1                                                                  & 13.80                                                                   & 1                                                                  & \textbf{13.39}                                                          & 1                                                                  \\ \hline
\end{tabular}
\caption{Computation times and number of branching nodes for the grid instances. \label{Tab:grid_results}}
\end{table}

\section{Conclusions}
In this work we study the Chv\'atal-Gomory cuts for spectrahedra and their strength in solving integer semidefinite programs resulting from combinatorial optimization problems.
Accordingly, this paper increases the theoretical understanding of integer semidefinite programming, which in turn contributes to new solution techniques for this type of problems.

In Section~\ref{Section:CGprocedure} we study the elementary closure of spectrahedra and the hierarchy obtained by iterating this procedure.
Using an alternative formulation of the elementary closure, see~\eqref{eq:coniccuts}, we provide simple proofs of several properties, including
a homogeneity property for bounded spectrahedra, see Theorem~\ref{Thm:closureHyperplane}. 
Although some of the here presented results are already known in the literature, the proofs we present are considerably simpler and are mainly based on concepts from mathematical optimization and number
theory.  We also present the polyhedral description of the elementary closure of spectrahedra whose defining linear matrix inequality is totally dual integral, see Theorem~\ref{Thm:TDI}.
To the best of our knowledge, this is the first such description for the elementary closure of a non-polyhedral set.  A full characterization of bounded LMIs that are TDI on  $\mathbb{Z}^m$ is given in Theorem~\ref{Thm:Hilbert}. 
Sufficient conditions for TDI-ness on an appropriate set $Z \subseteq \mathbb{Z}^m$ are given in Theorem~\ref{Thm:polyhedralonZ} and \ref{Thm:TDI_FDG}.

A generic B\&C algorithm for ISDPs based on strengthened CG cuts is presented in Section~\ref{Section:B&C}, see Algorithm~\ref{AlgB&C}. Our algorithm is a refinement of the algorithm from \cite{KobayashiTakano},
where the authors use eigenvector based inequalities to separate infeasible integer points. Moreover, our work can be seen as an extension of \cite{CezikIyengar}, in which the authors introduce CG cuts for conic programs, but leave the efficient separation of CG cuts as an open problem. Our numerical results indicate the effectiveness of the use of deeper CG cuts.
We also provide a separation routine for binary SDPs originating from combinatorial optimization problems, see Section~\ref{SeparationBinarySDP}.

In Section 4 we extensively study the application of our approach to the quadratic traveling salesman problem. Based on a generalization of the notion of algebraic connectivity to directed graphs,
we present two exact ISDP formulations of the \textsc{QTSP}, see \eqref{eq:ISDP1} and \eqref{eq:ISDP2}.  We show that the simplest CG separation routine boils down to finding integer eigenvectors of the adjacency matrix of a node-disjoint cycle cover, see Proposition~\ref{Prop:GomoryCut}.
However, more intricate dual multipliers lead to some well-known families of cuts, e.g., the ordinary and strengthened versions of the subtour elimination constraints, see Table~\ref{Table:SEC}.
We test several variants of our B\&C procedure that involve different separation routines.

Numerical results on the \textsc{QTSP} show that our B\&C algorithm significantly outperforms the two alternative ISDP solvers of \cite{GallyEtAl} and \cite{KobayashiTakano}. For the real instances from bioinformatics \cite{AandFFischer, AandFFischer_RNA}, these solvers are able to solve instances up to only $n = 15$ and $n = 25$, respectively, whereas our method can solve all instances up to $n = 40$ in a short timespan.
As one would expect, the extension to CG inequalities leads to deeper cuts, which successfully reduces the size of the branching tree compared to \cite{KobayashiTakano}. From all considered separation routines, it turns out that the setting SEC, see page~\pageref{page:SEC}, is overall most effective. This setting was able to solve {almost all of the 552 tested \textsc{QTSP} instances} to optimality within 5 minutes, where the largest instance contains $m = 5172$ arcs. This is currently the largest solved \textsc{QTSP} instance in the literature. 

Our work inspires several future research directions. It would be interesting to study the performance of our B\&C algorithm when applied to other optimization problems that can be formulated as ISDPs. 
We expect the exploitation of CG cuts in the branching scheme to be effective for such ISDPs. {Moreover, as for the QTSP many known classes of cuts turned out to be (strengthened) CG cuts with respect to the ISDP formulation, it would be interesting to know whether this also holds for other problems.} \\
\hfill \break
\noindent \textbf{Acknowledgements.} We thank Anja Fischer and Borzou Rostami for sharing with us the instances from bioinformatics and reload instances, respectively. {Moreover, we thank three anonymous reviewers for providing us with insightful feedback on an earlier version of this manuscript.}

\bibliography{QCCP_QTSP}

\begin{appendix}

\section{Derivation of subtour elimination constraints as CG cuts} \label{App:SEC}
In this appendix we elaborate on the construction of the five types of subtour elimination constraints given in Table~\ref{Table:SEC} as ($S$-)CG cuts.

\subsection{Ordinary subtour elimination constraint}
Let $S \subseteq N$ with $|S| < n$. The well-known subtour elimination constraint corresponding to $S$ can be obtained as a CG cut, see also \cite{CezikIyengar}. Let $\bold{\mathbbm{1}_{S}}$ be the indicator vector of the set of nodes $S$. Then
\begin{align*}
\left\langle \bold{\mathbbm{1}_{S}\mathbbm{1}_{S}}^\top , \, \beta \bold{I_n} + \alpha \bold{J_n} - \frac{1}{2}\left( \bold{X} + \bold{X}^\top \right) \right\rangle \geq 0
\end{align*}
is a valid cut. Applying the CG procedure to this cut, yields
\begin{align*} 
\left\langle \bold{\mathbbm{1}_{S}\mathbbm{1}_{S}}^\top, \frac{1}{2}(\bold{X} + \bold{X}^\top)\right \rangle
& \leq  \left \lfloor \left \langle \bold{\mathbbm{1}_{S}\mathbbm{1}_{S}}^\top, \beta \bold{I_n} + \alpha \bold{J_n} \right\rangle \right \rfloor \quad
\Longleftrightarrow \quad \sum_{i \in S, j \in S} x_{ij}  \leq \left \lfloor |S| \left( \beta + \alpha |S| \right) \right \rfloor.
\end{align*}
If $\beta = k_n$ and $\alpha = h_n /n$, then for all $S$ with $|S| < n$ we have $
\beta + \alpha |S| < 1$. Hence, the CG cut above implies
\begin{align} \label{cut:SEC1}
\sum_{i \in S, j \in S} x_{ij} \leq |S| - 1.
\end{align}
The cut \eqref{cut:SEC1} is the common subtour elimination constraint introduced by Dantzig et al.\ \cite{DantzigEtAl}. 

\subsection{Cut-set subtour elimination constraints}
The cut-set subtour elimination constraints are known to be equivalent to the ordinary subtour elimination constraints of \cite{DantzigEtAl}. It is therefore no surprise that these cuts can be obtained similarly as the ordinary subtour elimination constraints.

Let $\bold{U} = \bold{\mathbbm{1}_S\mathbbm{1}_S}^\top$ be the dual multiplier of the linear matrix inequality $\beta \bold{I_n} + \alpha \bold{J_n} - \frac{1}{2}\left( \bold{X} + \bold{X}^\top \right)$ and let $\bold{\mathbbm{1}_S}$ be the dual multiplier of the constraints $-\bold{X}\mathbf{1} = -\mathbf{1}$. The sum of these constraints yields
\begin{align*}
\left\langle \bold{\mathbbm{1}_{S}\mathbbm{1}_{S}}^\top, \frac{1}{2}(\bold{X} + \bold{X}^\top)\right \rangle - \bold{\mathbbm{1}_S}^\top \bold{X} \mathbf{1}
& \leq  \left \lfloor \left \langle \bold{\mathbbm{1}_{S}\mathbbm{1}_{S}}^\top, \beta \bold{I_n} + \alpha \bold{J_n} \right\rangle - \bold{\mathbbm{1}_S}^\top \mathbf{1} \right \rfloor \\
\Longleftrightarrow \quad -\sum_{i \in S, j \notin S} x_{ij} & \leq \lfloor |S| \left( \beta + \alpha |S|\right) \rfloor - |S|.
\end{align*}
If $\beta = k_n$ and $\alpha = h_n / n$, then the right-hand side becomes $|S| - 1 - |S| = -1$, which yields the desired cut.

\subsection{Merged subtour elimination constraint}
Let $(S_1, \ldots, S_k)$ be a partition of the node set of $G$, i.e., $\bigcup_{l = 1}^k S_l = N$ and $S_l \cap S_p = \emptyset$ for all $l \neq p$. We can obtain a merged subtour elimination constraint via the CG procedure in the following way.

Let $\bold{U} = 2\sum_{l = 1}^k \bold{\mathbbm{1}_{S_l}\mathbbm{1}_{S_l}}^\top$ be the dual multiplier for $\beta \bold{I_n} + \alpha \bold{J_n} - \frac{1}{2}\left( \bold{X} + \bold{X}^\top \right)$. Since each dual multiplier $\bold{\mathbbm{1}_{S_l}\mathbbm{1}_{S_l}}^\top$ leads to a CG cut of Type I in Table~\ref{Table:SEC}, its weighted sum also belongs to the elementary closure and looks as follows:
$$
2\sum_{l = 1}^k\sum_{\substack{i \in S_l \\j \in S_l}} x_{ij} \leq 2 \sum_{l = 1}^k \left(|S_l| - 1\right) = 2(n - k).
$$
Now we add to this cut the equality $-\bold{X}\mathbf{1} = - \mathbf{1}$ with dual multiplier $\mathbf{1}$, which yields the desired merged cut
\begin{align*}
2\sum_{l = 1}^k\sum_{\substack{i \in S_l \\ j \in S_l}} x_{ij} - \mathbf{1}^\top \bold{X} \mathbf{1} & \leq 2(n - k) - \mathbf{1}^\top \mathbf{1}
\quad \Longleftrightarrow \quad \sum\limits_{l = 1}^k \sum\limits_{\substack{i \in S_l \\ j \in S_l}} x_{ij} - \sum\limits_{l \neq p} \sum\limits_{\substack{i \in S_l \\ j \in S_p}}x_{ij}  \leq n - 2k.
\end{align*}

\subsection{Strengthened subtour elimination constraints of size two} \label{App:strongSEC2}
Let $i \neq j$ and define $\bold{U} = \bold{\mathbbm{1}_{\{i,j\}} \mathbbm{1}_{\{i,j\}}}^\top$. Taking $\bold{U}$ as the dual multiplier with respect to $\beta^{(2)} \bold{I_n} + \alpha^{(2)} \bold{J_n} - \frac{1}{2}\left( (\bold{X} + \bold{X^{(2)}}) +(\bold{X} + \bold{X^{(2)}} )^\top  \right) \succeq \mathbf{0}$, provides the following valid cut:
\begin{align*}
\left\langle \bold{\mathbbm{1}_{\{i,j\}}\mathbbm{1}_{\{i,j\}}}^\top, \beta^{(2)} \bold{I_n} + \alpha^{(2)} \bold{J_n} - \frac{1}{2}\left( (\bold{X} +  \bold{X^{(2)}} ) +(\bold{X} + \bold{X^{(2)}} )^\top  \right) \right\rangle \geq 0.
\end{align*}
Moreover, adding the coupling constraints $\sum_{k \in N: (i,k,j) \in \mathcal{A}}y_{ikj} - x^{(2)}_{ij} = 0$ and $\sum_{k \in N: (j,k,i) \in \mathcal{A}}y_{jki} - x^{(2)}_{ji} = 0$, each with dual multiplier $1$, and the constraints $- x_{ii} = 0$, $-x_{jj} = 0$, $-x^{(2)}_{ii} = 0$ and $-x^{(2)}_{jj} = 0$, also each with dual mulitplier $1$, gives
\begin{align*}
x_{ij} + x_{ji} + \sum_{\substack{k \in N: \\ (i,k,j) \in \mathcal{A}}}y_{ikj} + \sum_{\substack{k \in N: \\ (j,k,i) \in \mathcal{A}}}y_{jki} \leq 2 \beta^{(2)} + 4 \alpha^{(2)}.
\end{align*}
We now take $\beta^{(2)} = k_n^{(2)}$ and $\alpha^{(2)} = h_n^{(2)}/n$. Applying the standard CG procedure to this inequality results in the cut
\begin{align} \label{eq:weakCGcut}
x_{ij} + x_{ji} + \sum_{\substack{k \in N: \\ (i,k,j) \in \mathcal{A}}}y_{ikj} + \sum_{\substack{k \in N: \\ (j,k,i) \in \mathcal{A}}}y_{jki} \leq \left\lfloor 2 k_n^{(2)} + 4 \frac{h_n^{(2)}}{n} \right\rfloor.
\end{align}
The right-hand side of this cut equals one if $5 \leq n \leq 7$, two if $8 \leq n \leq 12$ and three if $n \geq 13$.

For $n \geq 5$, we can strengthen this cut by applying the $S$-CG procedure as explained in Section~\ref{Section:SCGcuts}. Since the cut \eqref{eq:weakCGcut} only involves variables $\bold{y}$ and $\bold{X}$, we can restrict the set $S$ to the space corresponding to these variables. Let $S = \mathcal{F}_1 \cap \left(\{0,1\}^\mathcal{A} \times \mathcal{T}_n(G)\right)$ and let $c_1$ be the coefficient vector of the left hand side in \eqref{eq:weakCGcut}. Then the strengthened rounding looks as follows:
\begin{align*}
\left\lfloor 2 k_n^{(2)} + 4 \frac{h_n^{(2)}}{n} \right\rfloor_{S, c_1} := \max \left\{ x_{ij} + x_{ji} + \sum_{\substack{k \in N: \\ (i,k,j) \in \mathcal{A}}}y_{ikj} + \sum_{\substack{k \in N: \\ (j,k,i) \in \mathcal{A}}}y_{jki} \, : \, \, \eqref{eq:weakCGcut}, \, (\bold{y},\bold{X}) \in S  \right\}.
\end{align*}
One can verify that the value of this maximization is equal to $1$ for $n \geq 5$. Namely, if it would be greater than 1, this implies a subtour of size two (if $x_{ij}= x_{ji} = 1$), size three (e.g., if $x_{ij} = 1$ and $y_{jki} = 1$ for some $k \in N \setminus \{i,j\}$) or size four (e.g., if $y_{ikj} = 1$ and $y_{jli} = 1$ for some distinct $k,l \in N \setminus \{i,j\}$), which contradicts the fact that $\bold{X} \in \mathcal{T}_n(G)$. We conclude that $\left\lfloor 2 k_n^{(2)} + 4 \frac{h_n^{(2)}}{n} \right\rfloor_{S, c_1} = 1$. Thus, we obtain the strengthened CG cut
\begin{align*}
x_{ij} + x_{ji} + \sum_{\substack{k \in N: \\ (i,k,j) \in \mathcal{A}}}y_{ikj} + \sum_{\substack{k \in N: \\ (j,k,i) \in \mathcal{A}}}y_{jki} \leq 1.
\end{align*}

\subsection{Strenghtened subtour elimination constraints}
Let $S \subset N$ with $2 \leq |S| < \frac{1}{2} n$ and define $\bold{U} = \bold{\mathbbm{1}_S \mathbbm{1}_S}^\top$. Taking $\bold{U}$ as the dual multiplier with respect to $\beta^{(2)} \bold{I_n} + \alpha^{(2)} \bold{J_n} - \frac{1}{2}\left( (\bold{X} + \bold{X^{(2)}}) +(\bold{X} + \bold{X^{(2)}})^\top  \right) \succeq \mathbf{0}$ provides the inequality
\begin{align*}
\left\langle \bold{\mathbbm{1}_S \mathbbm{1}_S}^\top , \beta^{(2)} \bold{I_n} + \alpha^{(2)} \bold{J_n} - \frac{1}{2}\left( (\bold{X} + \bold{X^{(2)}}) +(\bold{X} + \bold{X^{(2)}} )^\top  \right) \right\rangle \geq 0.
\end{align*}
For all $i,j \in S$ we now add the coupling constraints $\sum_{k \in N: (i,k,j) \in \mathcal{A}} y_{ikj} - {x}^{(2)}_{ij} = 0$ with dual multiplier 1. Moreover, for all $(i,k,j) \in \mathcal{A}$ with $i, k, j \in S$ we add the constraint $-y_{ikj} \leq 0$ with multiplier 1. This yields the following valid cut
\begin{align*}
\sum_{\substack{i \in S \\ j \in S}} x_{ij} + \sum_{\substack{i \in S \\ j \in S}} \sum_{\substack{k \in N \setminus S : \\ (i,k,j) \in \mathcal{A}}} y_{ikj} \leq |S|\beta^{(2)} + |S|^2 \alpha^{(2)}.
\end{align*}
Again, we take $\beta^{(2)} = k_n^{(2)}$ and $\alpha^{(2)} = h_n^{(2)} /n$. The standard CG rounding step yields
\begin{align} \label{eq:weakCGcut2}
\sum_{\substack{i \in S \\ j \in S}} x_{ij} + \sum_{\substack{i \in S \\ j \in S}} \sum_{\substack{k \in N \setminus S : \\ (i,k,j) \in \mathcal{A}}} y_{ikj} \leq \left\lfloor |S|\left( k_n^{(2)} + |S| \frac{h_n^{(2)}}{n}\right) \right\rfloor.
\end{align}
Since $|S| < \frac{1}{2}n$, we know
\begin{align*}
\left\lfloor |S|\left( k_n^{(2)} + |S| \frac{h_n^{(2)}}{n}\right) \right\rfloor \leq \left \lfloor |S| \left( k_n^{(2)} + \frac{1}{2} n \frac{2 - k_n^{(2)}}{n}\right) \right\rfloor = \left \lfloor |S| \left( 1 + \frac{1}{2} k_n^{(2)} \right) \right\rfloor \leq 2|S| - 1.
\end{align*}
However, similar to the approach in Appendix~\ref{App:strongSEC2}, we obtain a tighter bound if we apply the strengthened CG procedure. Let $T = \mathcal{F}_1 \cap \left(\{0,1\}^\mathcal{A} \times \mathcal{T}_n(G) \right)$ and let $c_2$ be the coefficient vector of the left hand side of \eqref{eq:weakCGcut2}. Then,
\begin{align*}
\left\lfloor |S|\left( k_n^{(2)} + |S| \frac{h_n^{(2)}}{n}\right) \right\rfloor_{T,c_2} := \max \left\{ \sum_{\substack{i \in S \\ j \in S}} x_{ij} + \sum_{\substack{i \in S \\ j \in S}} \sum_{\substack{k \in N \setminus S : \\ (i,k,j) \in \mathcal{A}}} y_{ikj} \, : \, \, \eqref{eq:weakCGcut2}, \, (\bold{y},\bold{X}) \in T  \right\}.
\end{align*}
It can be verified that this maximum is equal to $|S| - 1$ for all $S$ with $|S| < \frac{1}{2}n$. Namely, if $(\bold{y},\bold{X}) \in T$, we cannot have both $x_{ij} = 1$ and $y_{ikj} = 1$ for some $k \in N$. Hence, $x_{ij} + \sum_{k \in N \setminus S: (i,k,j) \in \mathcal{A}}y_{ikj} \leq 1$ for all $i,j \in S$. If we now sum over all $i,j \in S$, the result must be at most $|S| - 1$, otherwise a subtour would exist. The strengthened CG cut corresponding to \eqref{eq:weakCGcut2} becomes
\begin{align*}
\sum_{\substack{i \in S \\ j \in S}} x_{ij} + \sum_{\substack{i \in S \\ j \in S}} \sum_{\substack{k \in N \setminus S : \\ (i,k,j) \in \mathcal{A}}} y_{ikj} \leq |S| - 1.
\end{align*}

\section{The symmetric quadratic traveling salesman problem} \label{App:SQTSP}
In this appendix we briefly consider the symmetric quadratic traveling salesman problem (\textsc{SQTSP}). Although this problem is very related to the asymmetric version used in the rest of the paper (that we continue to denote by \textsc{QTSP}), the underlying model is different. We show how to construct this model and how all cuts for the \textsc{QTSP} can be extended to the symmetric case.
\medskip

Let $G = (V,E)$ be an undirected graph, where $E$ consists of undirected pairs of nodes $\{i,j\}$ $(= \{j,i\})$, $i, j \in V$. We define  $\mathcal{E} = \{\langle i, j, k\rangle = \langle k, j, i \rangle \, : \, \, i, j, k \in V, |\{i, j, k\}| = 3 \}$ to be the set of two-edges in $G$, where a two-edge is a sequence of three distinct nodes where the reverse sequence is regarded as identical. Given is a quadratic cost matrix $\bold{Q} = (q_{ijk})$, where a cost is zero if $\langle i, j, k\rangle \notin \mathcal{E}$.

The goal of the \textsc{SQTSP} is to find an undirected Hamiltonian cycle in $G$ such that the total quadratic cost is minimized. To model this problem, let $\bold{\bar{x}} \in \{0,1\}^E$ and $\bold{\bar{y}} \in \{0,1\}^\mathcal{E}$ denote indicator vectors that are 1 if and edge, respectively two-edge, is present in the solution and 0 otherwise. We aim to find a tuple $(\bold{\bar{x}}, \bold{\bar{y}})$ with $\bar{y}_{ijk} = \bar{x}_{ij}\bar{x}_{jk}$, representing a Hamiltonian cycle such that $\sum_{\langle i, j k \rangle \in \mathcal{E}}q_{ijk}y_{ijk}$ is minimized.

The symmetric equivalent of the set $\mathcal{F}_1$, see \eqref{setF1}, is now given by:
\begin{align*}\label{setF1_sym}
\mathcal{F}^s_1 := \left \{ (\bold{\bar{y}}, \bold{\bar{x}}) \in \{0,1\}^{\mathcal{E}} \times \{0,1\}^{E} \, : \quad \begin{aligned} \sum_{e \in \delta(i)}\bar{x}_e = 2 \quad \forall i \in V \\
\bar{x}_{ij} = \sum_{\substack{k \in V \\ \langle i, j, k \rangle \in \mathcal{E}}}\bar{y}_{ijk}
= \sum_{\substack{k \in V \\ \langle k, i, j \rangle \in \mathcal{E}}}\bar{y}_{kij} \,\, \forall \{i,j\} \in E
\end{aligned} \right\},
\end{align*}
where $\delta(i) \in V$ denotes the set of edges that are incident to $i$. The formulation used in $\mathcal{F}^s_1$ is introduced by Fischer and Helmberg \cite{FischerHelmberg} where it is shown that the equation $\bar{y}_{ijk} = \bar{x}_{ij}\bar{x}_{jk}$ is indeed established for all $\langle i, j, k \rangle \in \mathcal{E}$. Moreover, similar to the asymmetric case,  we can relax the integrality of $\bold{\bar{y}}$, since it is enforced by the integrality of $\bold{\bar{x}}$ and the coupling constraints, see Remark~\ref{Remark:integrality}. It follows that the tuples in $\mathcal{F}_1^s$ are characteristic vectors of node-disjoint cycle covers in $G$, where the smallest cycles have size 3 due to the definition of $\mathcal{E}$.

The B\&C algorithm presented in Section~\ref{Section:B&C} can now be applied to the \textsc{SQTSP}, starting from optimizing over $\mathcal{F}_1^s$. In order to cut off solutions that do not correspond to a Hamiltonian cycle in $G$, we need separation routines for the symmetric version. Instead of providing symmetric equivalents to all \textsc{QTSP} cutting planes derived in Section~\ref{Section:CutsforQTSP}, we present a transformation that maps any valid cut for the asymmetric version to a cut for the \textsc{SQTSP}. To that end, we introduce a directed graph $H = (V,A)$ that is defined on the same set of nodes as the undirected graph $G$, where $A$ is such that it contains both pairs $(i,j)$ and $(j,i)$ whenever the corresponding edge $\{i,j\}$ is contained in $G$. Moreover, we define the cost of each two-arc $(i,j,k)$ in $H$ to be equal to $q_{\langle i, j, k\rangle }$ for the corresponding two-edge $\langle i, j, k\rangle$ in $G$. Let $\mathcal{I}_\text{S}$ denote the original \textsc{SQTSP} instance and let $\mathcal{I}_\text{A}$ denote the corresponding asymmetric instance. 

The variables in the two programs can now be related as follows: Let $(\bold{y}, \bold{X})$ be variables in $\mathcal{I}_\text{A}$ and define the tuple $(\bold{\bar{y}}, \bold{\bar{x}})$ by
$\bar{x}_{ij} = x_{ij} + x_{ji}$ for all $\{i,j\} \in E$,  and
$\bar{y}_{ijk} = y_{ijk} + y_{kji}$ for all $\langle i, j, k \rangle \in \mathcal{E}.$

From the constraints in $\mathcal{F}_1$ and $\mathcal{F}_1^s$, it follows that any solution $(\bold{y}, \bold{X})$ in $\mathcal{I}_\text{A}$ leads to a solution $(\bold{\bar{y}}, \bold{\bar{x}})$ in $\mathcal{I}_\text{S}$ with the same objective value. Reversely, any solution $(\bold{\bar{y}}, \bold{\bar{x}})$ in $\mathcal{I}_\text{S}$ leads to a solution (or actually two solutions, one for each direction) $(\bold{y}, \bold{X})$ in $\mathcal{I}_\text{A}$ with the same objective value. As a result, any valid cut for $\mathcal{I}_\text{A}$ is also valid for $\mathcal{I}_\text{S}$.

This implies that all cuts defined in Section~\ref{Section:CutsforQTSP} can be converted to cuts for the \textsc{SQTSP}. Namely, given a cut for $\mathcal{I}_\text{A}$, we define the coefficient on $\bar{x}_{ij}$ to be the sum of the coefficients on $x_{ij}$ and $x_{ji}$ for all edges $\{i,j\} \in E$. Similarly, we define the coefficient on $\bar{y}_{ijk}$ to be the sum of the coefficients on $y_{ijk}$ and $y_{kji}$ for all two-edges $\langle i, j, k \rangle \in \mathcal{E}$. If no more violated cuts can be found in $\mathcal{I}_\text{A}$, the corresponding solution in $\mathcal{I}_\text{S}$ is also optimal. This proves the validity of the B\&C algorithm for the symmetric version of the problem.

\section{Extended computational results} \label{App:computations}
In this appendix we present a complete overview of the computational results from which the summarized results in Section~\ref{Section:ComputationResults} follow. We start by considering the instances from bioinformatics, after which we present results for the reload instances. No additional results are presented for the turn instances, since for these instances the complete overview is already given in Section~\ref{Section:ComputationResults}.

In all tables showing computation times, the setting that provides the shortest time is presented in bold for each instance. Moreover, a `-' indicates that a given algorithm could not solve the instance within 8 hours.

The computation times and the number of branching nodes for the class of `bma' instances from bioinformatics are given in Table~\ref{Tab:bma_times} and \ref{Tab:bma_nodes}, respectively. Table~\ref{Tab:map_times} and \ref{Tab:map_nodes} provide computation times and number of branching nodes for the `map' instances, respectively. The computation times and the number of branching nodes for the `ml' instances are presented in Table~\ref{Tab:ml_times} and \ref{Tab:ml_nodes}, respectively.

Finally, we present a more elaborate overview of the reload instances. For each of the two classes and for different values of $n, p$ and $c$, 10 randomly generated instances are considered. In order to save space, we only present the results that are averaged over these 10 similar instances. Table~\ref{Tab:Reload_ext1} and \ref{Tab:Reload_ext2} present the computation times and number of branching nodes, respectively. {Table~\ref{Tab:Reload_ext3} shows the computation times  on 72 additional reload instances on both reload classes with $n \in \{21, \ldots, 26\}$, $p \in \{0.5, 0.8\}$ and $c \in \{5,10,20\}$ in order to further investigate the difference between the settings SEC-simple and SEC. As indicated in Section~\ref{Section:ComputationResults}, the results in Table~\ref{Tab:Reload_ext3} are still not decisive on which of the two settings performs better.}

\begin{landscape}
\begin{table}[H]
\parbox{.45\linewidth}{
\scriptsize
\centering
\setlength{\tabcolsep}{3pt}

\caption{Computation times of SEC-simple and SEC on 72 additional reload instances  for given values of $n, p$ and $c$. \label{Tab:Reload_ext3}}
\end{table}

\end{appendix}

\end{document}